\newtheorem{theorem}{Theorem}[section] 
\newtheorem{lemma}[theorem]{Lemma}
\newtheorem{prop}[theorem]{Proposition}
\numberwithin{equation}{section}
\newcommand{\abs}[1]{\lvert #1 \rvert}
\newcommand{\sumstack}[1]{{\substack{#1}}}
\newcommand{\nsmod}[1]{\text{ mod $#1$}} 
\newcommand{\fq}[0]{\mathbb{F}_q}
\newcommand{\Disc}[0]{\mathrm{Disc}}
\newcommand{\Cl}[0]{\mathrm{Cl}}
\theoremstyle{definition}
\newtheorem*{remark}{Remark}
\title[Non-vanishing of $L$-functions associated with $D_4$-quartic function fields ordered by conductor]{Non-vanishing of Artin $L$-functions associated with $D_4$-quartic\\ function fields ordered by conductor}
\author{Victor Ahlquist}
\address{Department of Mathematical Sciences, Chalmers University of Technology and the University \newline
	\rule[0ex]{0ex}{0ex}\hspace{8pt} of Gothenburg, SE-412 96 Gothenburg, Sweden}
\subjclass[2020]{11R16, 11R45, 11R59 (Primary) 11M50 (Secondary)}
\email{vicahlqu@chalmers.se} 
\date{}
\begin{document}

\begin{abstract}
 We study the low-lying zeros of certain Artin $L$-functions associated with $D_4$-quartic function fields. Specifically, we prove that when ordered by conductor, at least $77\%$ of these $L$-functions are non-vanishing at the central point. This generalises and extends results over $\mathbb{Q}$ due to Durlanik, proving that an infinite number of these $L$-functions are non-vanishing.

We obtain these results by examining the low-lying zeros of the $L$-functions using the one-level density. Specifically, we apply and extend a method used by Rudnick, who studied Dirichlet $L$-functions associated with quadratic function field extensions, to the $D_4$-case. The main difficulty is studying $L$-functions which are associated to $D_4$-fields whose quadratic subfield is of large discriminant. These $L$-functions are studied by utilising the so-called flipped field of a $D_4$ extension, combining a method introduced by Friedrichsen for counting $D_4$-fields, with explicit ramification theory in such fields provided by Altuğ, Shankar, Varma and Wilson.
\end{abstract}
\maketitle
\section{Introduction}
Recent decades have seen much interest in the vanishing, or non-vanishing, of $L$-functions at the central point. In some cases, the degree of vanishing of an $L$-function at the central point is tied to arithmetic data of a corresponding algebraic or geometric object. For example, in the case of $L$-functions associated with elliptic curves, the Birch and Swinnerton-Dyer conjecture asserts that the degree of vanishing, i.e. the so-called analytic rank, is equal to the rank of the group of rational points on the curve. 

The average analytic rank of elliptic curves was first studied by Brumer \cite{Brumer}, proving an upper bound of $2.3$ for this average. Brumer studied elliptic curves over $\mathbb{Q}$, as well as over the rational function field $\fq(T)$, with $q$ a prime power, however the results over $\mathbb{Q}$ are conditional on GRH. This bound was later improved by Heath-Brown \cite{Heath-Brown} and Young \cite{Young1}. The current best result for the average (algebraic) rank, which is in fact unconditional, is the bound $0.885$, proven by Bhargava and Shankar \cite{BS}.

The case when the degree of vanishing at the central point equals zero has been given much attention in the literature and lower bounds for the number of $L$-functions in a family which do not vanish at the central point are known as non-vanishing results. For the family of all elliptic curve $L$-functions, the first such result is due to Young \cite{Young}, conditional on the GRH. There are numerous results for other families of $L$-functions and we only mention two examples here. For a certain family of modular form $L$-functions, Iwaniec, Luo and Sarnak \cite[Corollaries 1.6, 1.7, 1.8]{ILS} proved non-vanishing, under the GRH, for a positive proportion, when ordered by weight or level. Furthermore, Iwaniec and Sarnak \cite{Iwaniec-Sarnak} proved an unconditional rate of non-vanishing of $50\%$ in a family of modular form $L$-functions, and in fact, they proved that any improvement of this rate would imply the non-existence of Landau-Siegel zeros for Dirichlet $L$-functions.

In some families, all $L$-functions are expected to be non-vanishing at the central point. One such family is the family of non-trivial Artin $L$-functions associated with quadratic extensions of $\mathbb{Q}$. This is closely related to families of quadratic Dirichlet $L$-functions. Over $\mathbb{Q}$, Özluk and Snyder \cite[Corollary 3]{Ozluk-Snyder} proved that under GRH, at least $93.75\%$ of such $L$-functions are non-vanishing when ordered by discriminant. After some additional optimisation, this lower bound can be improved to $\approx 94.27\%$, see e.g. \cite[Appendix A]{ILS}. The current best unconditional result over $\mathbb{Q}$ is due to Soundararajan \cite{S}, establishing a rate of non-vanishing of at least $87.5\%$, for a certain subfamily with specified ramification at the prime $2$.

Over the rational function field $\fq(T)$, with $q$ a prime power, the situation for the analogous quadratic family is more complicated. Indeed, Li \cite[Theorem 1.3]{Li} proved that an infinite number of quadratic Dirichlet $L$-functions vanish at the central point. On the other hand, non-vanishing results can be deduced from work of Rudnick \cite[Corollary 3]{Rudnick}, see also \cite[Corollary 2.1]{BF}, establishing a rate of non-vanishing analogous to the result over $\mathbb{Q}$. More recent work by Ellenberg, Li and Shusterman \cite[Theorem 1.2]{ELS} shows that if one considers $q=p^e$, then by picking $e$ large enough, the rate of non-vanishing can be brought arbitrarily close to $100\%$.

We study a certain family of Artin $L$-functions associated with $D_4$-quartic fields, ordered by conductor. More precisely, if $L$ is a quartic extension of $\fq(T)$, whose Galois closure has Galois group $D_4$, then $L$ contains a unique quadratic subfield $K$. Now, the group $D_4$ has a unique irreducible two-dimensional representation, with a corresponding Artin $L$-function equal to the quotient of Dedekind zeta functions $\zeta_L /\zeta_K$. Up to a scaling factor $q^{-4}$, its conductor $\mathrm{Cond}(L)$ equals the quotient $\abs{\Disc(L)}/\abs{\Disc(K)}$ of absolute values of absolute discriminants. Setting $C(L)=q^{4}\mathrm{Cond}(L)$, we prove the following theorem, establishing a positive proportion of non-vanishing in this family.
\begin{theorem}\label{introthmnonvanish}
    Let $q$ be a prime power coprime to $2$ and larger than a sufficiently large absolute constant. Define 
    \begin{equation*}
        \mathcal{F}(X) = \{(L,K): \text{ $L/\fq(T)$ is geometric and $D_4$-quartic with $C(L)=X$, $[L:K]=2$} \},
    \end{equation*}
    with one representative from each isomorphism class. Then, at least $77 \%$ of the Artin $L$-functions $P_{L/K}(s):=\zeta_L(s)/\zeta_K(s)$ associated with $(L,K)\in \mathcal{F}(X)$ are non-vanishing at the central point $s=1/2$. More precisely, 
    \begin{equation*}
        \liminf_{X\to \infty} \frac{1}{\#\mathcal{F}(X)}\sum_{(L,K)\in \mathcal{F}(X)}\mathbf{1}_{P_{L/K}(1/2) \neq 0}\geq 0.77. 
    \end{equation*}
\end{theorem}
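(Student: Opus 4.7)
The plan is to adapt the one-level-density framework for low-lying zeros to the family $\mathcal{F}(X)$: bound the averaged one-level density of zeros of $P_{L/K}$ near $s=1/2$ by an integral of a test function against a main term, then deduce non-vanishing via positivity. Because $P_{L/K}(s)$ is, up to a unitary factor, a polynomial in $q^{-s}$ of degree controlled by $\deg C(L)$, its inverse zeros lie on $|u|=q^{-1/2}$ by Weil; write them as $q^{-1/2}e^{i\theta_j}$ and, for an even Schwartz $f$ with $\hat f$ supported in $[-\alpha,\alpha]$, set $D(f;L,K)=\sum_j f(\theta_j\,\deg C(L)/2\pi)$. A standard positivity argument shows that if $f\geq 0$ on $\mathbb{R}$, $f(0)=1$, and
\[
\limsup_{X\to\infty}\frac{1}{\#\mathcal{F}(X)}\sum_{(L,K)\in\mathcal{F}(X)} D(f;L,K)\leq W(f),
\]
then the proportion of $(L,K)$ with $P_{L/K}(1/2)\neq 0$ is at least $1-W(f)$. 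The whole theorem thus reduces to an average one-level-density computation followed by an optimisation over the allowed class of $f$.

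Next I would invoke the function-field explicit formula to recast $D(f;L,K)$ as $\hat f(0)$ minus a weighted sum over prime powers $P^k$ of $\fq[T]$ of the Frobenius trace $a_{L/K}(P^k)$ in the two-dimensional irreducible representation $\rho$ of $D_4$. Since $\rho=\mathrm{Ind}_H^{D_4}(\chi)$ for an appropriate quadratic character $\chi$ on the subgroup fixing $K$, the trace $a_{L/K}(P^k)$ encodes the splitting of $P$ in $L$ relative to $K$. Averaging over $\mathcal{F}(X)$ therefore reduces the problem to controlling, uniformly in $P$, the frequency of prescribed local behaviour in $D_4$-quartics of conductor $C(L)=X$; the predicted main term on the density side is a symplectic-like random-matrix kernel coming from the quadratic character $\chi$.

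The crucial and hardest step is performing this average uniformly. I would partition $\mathcal{F}(X)$ by $d=\deg\Disc(K)$. In the easy range, where $d$ is small compared to $\deg C(L)$, the quadratic subfield $K$ is essentially a fixed parameter and the $D_4$-quartic $L$ is controlled by a quadratic character on the appropriate ray class group of $K$; an adaptation of Rudnick's Poisson summation together with Weil-type bounds on the relevant character sums over $K$ then yields the desired estimate on the prime-sum contribution. The dangerous range is $d$ comparable to $\deg C(L)$, in which $K$ carries most of the conductor and one cannot fix $K$. Here I would use Friedrichsen's flipping construction: to each such $(L,K)$ one associates a companion $D_4$-quartic $(L^{\mathrm{fl}},K^{\mathrm{fl}})$ obtained by swapping the two quadratic subfields of the common Galois closure, and by construction $K^{\mathrm{fl}}$ has small discriminant. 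The flip is a bijection with an explicit local correction at each ramified prime, and the ramification calculations of Altuğ–Shankar–Varma–Wilson compute this correction exactly, so that the averaging in the hard range becomes a computable perturbation of the averaging in the easy range.

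Assembling the two ranges should produce a formula of the form $\hat f(0) - \int \hat f(x) k(x)\,dx + o(1)$ for an explicit density $k$ whenever $\hat f$ is supported in an admissible window $[-\alpha,\alpha]$, provided $q$ is sufficiently large (needed to absorb error terms). Substituting into the non-vanishing inequality and numerically optimising the test function (in the spirit of Özluk–Snyder and Iwaniec–Luo–Sarnak) over the admissible support then yields the stated $77\%$ rate. The principal obstacle is widening the admissible support $\alpha$: the flipped-field averaging in the hard range, together with the ramification bookkeeping, constrains how large $\alpha$ one can take, and the final numerical constant $0.77$ reflects this quantitative limitation rather than an intrinsic barrier to the method.
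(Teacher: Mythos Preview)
Your overall strategy—explicit formula, partition by $d=\deg\Disc(K)$, Rudnick-type averaging for small $d$, and the flipped-field argument for large $d$—matches the paper closely. However, there is a genuine gap in the final step that would prevent you from reaching $77\%$.

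You propose to ``assemble the two ranges'' into a \emph{single} one-level-density statement for the full family $\mathcal{F}(X)$, valid for $\widehat f$ supported in some window $[-\alpha,\alpha]$, and then optimise a single test function. But the admissible support depends sharply on the subfamily: for $(L,K)$ with $\abs{\Disc(K)}\approx X^{\gamma}$, the paper only obtains support $\sigma<2-3\min(\gamma,1-\gamma)$. Near $\gamma=1/2$ this degenerates to $\sigma<1/2$, so any test function that works uniformly across all of $\mathcal{F}(X)$ is constrained to $\sigma<1/2$. With the Fej\'er-type test function this yields at best a $25\%$ non-vanishing rate, nowhere near $77\%$.

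The paper's key additional idea, which your proposal omits, is to \emph{keep the partition all the way through the non-vanishing argument}: one refines into thin slices $\mathcal{F}_{\alpha,\beta}(X)$, applies the one-level density with the slice-dependent support $\sigma(x)=\max\{2-3x,\,2-3(1-x)\}$, chooses the optimal test function $\psi_{\sigma(x)}$ separately on each slice, and then integrates the resulting vanishing bounds over $x\in[0,1]$ (using that the slice of width $dx$ has density $dx$ in $\mathcal{F}(X)$). This integral, with the optimal Vanderkam test functions, evaluates to about $0.23$, giving the $77\%$. A minor related point: your bound $1-W(f)$ should be $1-W(f)/2$, since self-duality forces central zeros of $P_{L/K}$ to have even multiplicity; the paper uses this factor of $2$ explicitly.
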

Non-vanishing results for the corresponding family over $\mathbb{Q}$ were found by Durlanik \cite{Durlanik}, proving non-vanishing for a proportion $\gg X^{-1/4}$ of $L$-functions with conductor bounded by $X$. Conditional on the Generalised Lindelöf Hypothesis, this was improved to a proportion $\gg_\epsilon X^{-\epsilon}$ for every $\epsilon> 0$. The methods we use to prove Theorem \ref{introthmnonvanish} are essentially analytic in nature, and have natural analogues over number fields, whence we expect that by adapting the argument used here mutatis mutandis, one should obtain the same rate of non-vanishing over $\mathbb{Q}$ under relevant hypotheses.

The family $\mathcal{F}(X)$ is closely related to the quadratic families described above. Indeed, when ordered by conductor (or discriminant), essentially all quadratic extensions of quadratic extensions of $\fq(T)$ are $D_4$. Hence, finding non-vanishing results for $L$-functions parametrised by $\mathcal{F}(X)$ reduces to finding non-vanishing results for Artin $L$-functions corresponding to quadratic extensions $L/K$, with $K$ quadratic over $\fq(T)$, while keeping track of the $K$-dependence. In fact, by the same arguments as those used to obtain Theorem \ref{introthmnonvanish}, we are able to obtain the following non-vanishing result for Artin $L$-functions associated with quadratic extensions of suitable base fields.
\begin{prop}\label{quadprop}
    Let $K$ be an arbitrary fixed function field, with constant field $\fq$ of size coprime to $2$. For $X=q^{2n}$, let $\mathcal{G}(X) = \{L: [L:K]=2, \abs{\mathrm{Disc}(L/K)} = X\}$. Then, we have that at least $94\%$ of Artin $L$-functions associated with extensions in $\mathcal{G}(X)$ are non-vanishing at the central point. Specifically,
    \begin{equation*}
        \liminf_{X\to \infty} \frac{1}{\#\mathcal{G}(X)}\sum_{L\in \mathcal{G}(X)}\mathbf{1}_{P_{L/K}(1/2) \neq 0}\geq \frac{19-\cot(1/4)}{16}. 
    \end{equation*}
\end{prop}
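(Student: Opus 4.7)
The strategy is to adapt Rudnick's one-level density computation \cite{Rudnick} for quadratic Dirichlet $L$-functions over $\fq(T)$ to quadratic extensions of an arbitrary fixed function field $K$, then invoke the variational argument from \cite[Appendix A]{ILS} to convert the density bound into the claimed non-vanishing rate.

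First I would parametrise $\mathcal{G}(X)$ via quadratic characters: since $q$ is odd, every $L\in\mathcal{G}(X)$ has the form $K(\sqrt{D})$ for a squarefree effective divisor $D$ on $K$ (unique up to principal squares), and $P_{L/K}(s)$ equals $L(s,\chi_L)$ for the associated quadratic idele-class character $\chi_L$. By Weil, this is a polynomial in $q^{-s}$ of degree $N=N(X)=\log_q X + O_K(1)$ whose zeros lie on $\mathrm{Re}(s)=1/2$, and a standard Dirichlet-series argument gives $\#\mathcal{G}(X)\sim c_K X$ for a positive constant $c_K$ depending only on $K$. Applying the explicit formula to an even Schwartz test function $\phi$ whose Fourier transform $\widehat\phi$ is compactly supported yields
\begin{equation*}
    \sum_j \phi\!\left(\frac{N\theta_{L,j}}{2\pi}\right) = \widehat\phi(0) - \frac{2}{N}\sum_{v}\sum_{k\ge 1}\widehat\phi\!\left(\frac{k\deg v}{N}\right)\frac{\chi_L(v)^k \deg v}{q_v^{k/2}} + O_K\!\left(\frac{1}{N}\right),
\end{equation*}
the outer sum running over places $v$ of $K$ with residue field size $q_v$. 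Averaging over $L\in\mathcal{G}(X)$, the even-$k$ terms assemble into the expected symplectic diagonal contribution $-\tfrac{1}{2}\int_{-1}^{1}\widehat\phi(y)\,dy$, while the odd-$k$ terms must vanish in the limit provided $\mathrm{supp}(\widehat\phi)\subset (-2,2)$.

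The main obstacle is controlling the character averages
\begin{equation*}
    S_k(v) = \frac{1}{\#\mathcal{G}(X)}\sum_{L\in\mathcal{G}(X)}\chi_L(v)^k \qquad (k \text{ odd})
\end{equation*}
over a general base $K$, because Rudnick's Poisson summation in $\fq[T]$ has no direct analogue here. I would instead analyse $S_k(v)$ by contour integration of the Dirichlet series
\begin{equation*}
    Z_{v,k}(s) = \sum_{D\text{ squarefree}}\chi_D(v)^k\, q^{-s\deg D},
\end{equation*}
which, via quadratic reciprocity and its Euler product, coincides up to finitely many local correction factors with $L(s,\chi_v)/L(2s,\chi_v^2)$ for a nontrivial quadratic Hecke character $\chi_v$ of $K$ when $k$ is odd. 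The key point is that this series is holomorphic at $s=1$, so shifting the contour from $\mathrm{Re}(s)>1$ to $\mathrm{Re}(s)=1/2+\epsilon$ and invoking Weil's Riemann hypothesis for the $L$-functions of $K$ yields a power saving in $S_k(v)$ sufficient for the odd-$k$ sums over $v$ to vanish in the limit, establishing convergence of the one-level density to the symplectic density $1 - \sin(2\pi x)/(2\pi x)$.

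Finally, to extract the non-vanishing proportion, take an even nonnegative $\phi$ with $\phi(0)=1$ and $\mathrm{supp}(\widehat\phi)\subset [-2,2]$: each zero of $P_{L/K}$ at $s=1/2$ contributes $\phi(0)=1$ to the density, so the proportion of $L\in\mathcal{G}(X)$ with $P_{L/K}(1/2)=0$ is at most
\begin{equation*}
    \int_{\mathbb{R}} \phi(x)\!\left(1 - \frac{\sin 2\pi x}{2\pi x}\right)dx.
\end{equation*}
Minimising this integral over admissible $\phi$ is the variational problem solved explicitly in \cite[Appendix A]{ILS}, with minimum value $(\cot(1/4)-3)/16$, yielding the claimed non-vanishing lower bound $(19-\cot(1/4))/16$.
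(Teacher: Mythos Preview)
Your outline captures the broad shape of the argument, but there is a genuine gap in the treatment of the odd-$k$ terms that prevents you from reaching support $(-2,2)$, and hence from obtaining the claimed rate $(19-\cot(1/4))/16$. Shifting the contour of $Z_{v,k}(s)$ to $\Re(s)=1/2+\epsilon$ and invoking the Lindel\"of bound (which follows from Weil) gives only $S_k(v)\ll_{K,\epsilon} X^{-1/2+\epsilon}|v|^\epsilon$; summing this over places $v$ with $\deg v\le \sigma N$ against $\deg v/q_v^{1/2}$ produces a term of size $q^{(\sigma-1+o(1))n}$, which is $o(1)$ only for $\sigma<1$. With support restricted to $(-1,1)$ the optimal test function yields at most $75\%$ non-vanishing. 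A related symptom of the gap: your assertion that ``the odd-$k$ terms must vanish in the limit provided $\mathrm{supp}(\widehat\phi)\subset(-2,2)$'' is false --- for $1<\sigma<2$ the $k=1$ terms contribute the nonzero phase-transition quantity $\tfrac{1}{2}\phi(0)-\tfrac{1}{2}\int_{-1}^1\widehat\phi$, and it is this, combined with the $-\tfrac{1}{2}\phi(0)$ from the $k=2$ terms, that assembles into the symplectic answer.

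What is missing is the second half of Rudnick's method, which the paper carries out over general $K$ in Section~\ref{smallsubfldch}. After expanding $\sum_{L}\chi_{L/K}(\mathcal{P})$ as a sum over primitive quadratic characters of $\mathrm{Cl}_\mathcal{P}^0(K)$ and removing the squarefree condition by M\"obius, one splits on the size of the M\"obius variable; on the range where the coefficient index $b$ of $L(u,\chi)$ is large one applies the \emph{functional equation} of $L(u,\chi)$ (so the dual index $2g_K-2+m-b$ is short), and then one must \emph{interchange and re-sum over $\mathcal{P}$}, bounding $\sum_{\deg_K\mathcal{P}=m}\chi_{L(\mathrm{sqf}(\mathcal{B}))/K}(\mathcal{P})\ll_K q^{m/2}$ for non-square $\mathcal{B}$ via Weil. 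It is this double use of cancellation --- once in the $L$-average and once in the $\mathcal{P}$-average after the functional equation --- that extends the support to $(-2,2)$; a single contour shift of $Z_{v,k}$ cannot detect the second cancellation. Two smaller points: over general $K$ the parametrisation of $\mathcal{G}(X)$ is by pairs $(\mathcal{A},\bar u)$ with $\mathcal{A}$ squarefree \emph{and a square in $\mathrm{Cl}(K)$} and $\bar u\in S(K)$, not merely by squarefree divisors (this is what makes ``quadratic reciprocity'' over $K$ delicate and is why the paper works through $\mathrm{Cl}_\mathcal{P}$ and Lemmas~\ref{splitlemma}--\ref{seqlemma}); and in the final step you should use that central zeros have even multiplicity by self-duality, which supplies the factor of $2$ needed to land on $(\cot(1/4)-3)/16$ rather than twice that.
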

For $K=\fq(T)$ our arguments and results essentially reduce to those of Rudnick \cite{Rudnick}, see also \cite{BF}. Because the field $K$ is fixed, we do not need to take $q$ large in contrast to Theorem \ref{introthmnonvanish}. Analogues of Proposition \ref{quadprop} were previously known for the finitely many imaginary quadratic number fields $K$ with class number $1$ by work of Gao and Zhao \cite{GZ1, GZ2}. Similarly to Theorem \ref{introthmnonvanish}, we expect that Proposition \ref{quadprop} generalises to the number-field case with only minor changes to the argument.

Our proofs of the two results above are based on studying the one-level density, which is a tool for studying the low-lying zeros of a family of $L$-functions. Specifically, if $\mathcal{H}(X)$ is a family of $L$-functions ordered by $X$, then for a Schwartz test function $\psi$, the one-level density is the average
\begin{equation}\label{abstractonelev}
    \frac{1}{\#\mathcal{H}(X)}\sum_{L(f,s)\in \mathcal{H}(X)}D_{f}(\psi),\,\,\,\,\text{ where }\,\,\,\,D_f(\psi) := \sum_{\rho_f: L(f,\rho_f)= 0} \psi\left(\frac{\gamma_f\log c_X}{2\pi}\right).
\end{equation}
Here $\rho_f = 1/2+i\gamma_f$ ranges over the nontrivial zeros of $L(s,f)$ and $c_X\asymp c_f$, the conductor of $L(s,f)$. Katz and Sarnak \cite{Katz-Sarnak} conjectured that as $X\to \infty$, the one-level density converges to the integral of $\phi(x)$ against one of five density functions, depending on the so-called symmetry type of $\mathcal{H}$. Aside from requiring that $\psi$ is Schwartz, one also requires that its Fourier transform is supported in $[-\sigma,\sigma]$ for some finite $\sigma >0$.

The Katz--Sarnak conjecture has been verified in several different families, as long as $\sigma$ is small enough. See e.g. \cite{ILS, Hughes-Rudnick, Young1}. The quadratic family is of symplectic type, and was studied in \cite{Ozluk-Snyder} and \cite{Rudnick, BF} over $\mathbb{Q}$ and $\fq(T)$ respectively, for $\sigma < 2$. In general, one can extract better non-vanishing results by proving the Katz--Sarnak conjecture for larger values of $\sigma$. We remark that obtaining results for support $\sigma > 2$ is a very hard problem, but has been accomplished when one performs additional averaging, see \cite{DPR} and \cite{BCL} for two recent examples.

In order to study the one-level density corresponding to $\mathcal{F}(X)$, the first step is to estimate the counting function $\#\mathcal{F}(X)$, i.e. the number of $D_4$-quartic field extensions $L/\fq(T)$, with $C(L)=X$. The analogous question over $\mathbb{Q}$ was first studied by \cite{ASVW}, who found a main term of size $\asymp X\log X$, with an error term of size $X\log\log X$. Their method utilises the so-called flipped field of a $D_4$-quartic field. This argument was later refined by Friedrichsen \cite{Friedrichsen}, who found an asymptotic formula for the counting function, consisting of the main term, a secondary term of size $\asymp X$, and an error term of size $\mathcal{O}_\epsilon(X^{11/12+\epsilon})$. 

We mention some results in different directions for the $D_4$-family over $\mathbb{Q}$. This family, when ordered by discriminant was first studied by Cohen, Diaz y Diaz and Olivier \cite{CDyDO}, and the current best bound for the error term is due to McGown and Tucker \cite{MT}. Moreover, Hansen and Zanoli \cite{HZ} studied this family when one instead orders it by so-called multi-invariants. Finally, we mention that the closely related family of $D_4$-octics was recently studied by Shankar and Varma \cite{SV}. 

Over $\fq(T)$, the number of $D_4$-quartic fields has previously been studied when ordered by discriminant by Keliher \cite{Keliher}. We apply Friedrichsen's methods to study $\#\mathcal{F}(X)$ and obtain the following theorem.
\begin{theorem}\label{thmfieldctall}
     For $q$ larger than a constant depending only on $\epsilon$, there is a constant $C_2=C_2(q)$, given in \eqref{fieldctthmeq}, such that, for $X=q^{2n}$, 
    \begin{equation*}
    \begin{split}
        \#\mathcal{F}(X) = 2X&(n-1)(1-q^{-2})^2\prod_{P}\left(1+\frac{1}{(\abs{P}+1)^2}\right) +C_2X + \mathcal{O}\left(X^{3/4+\epsilon}\right).
    \end{split}
    \end{equation*}
\end{theorem}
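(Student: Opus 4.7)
My plan is to parametrize $\mathcal{F}(X)$ by pairs $(K, L/K)$ and then deploy the flipped-field framework of \cite{ASVW}, as refined by Friedrichsen in \cite{Friedrichsen}, translated to the function-field setting. The starting point is the conductor-discriminant formula for the tower $\fq(T)\subset K\subset L$, giving $\abs{\Disc(L)}=\abs{\Disc(K)}^{2}N_{K/\fq(T)}(\Disc(L/K))$, so that the constraint $C(L)=X$ reads $DM=X$ with $D:=\abs{\Disc(K)}$ and $M:=N_{K/\fq(T)}(\Disc(L/K))$. Since $q$ is odd, quadratic extensions of $K$ are controlled by Kummer theory, and the degeneracies where the Galois closure reduces to $V_{4}$ or $C_{4}$ cut out a thin subfamily that can be excluded by an explicit condition on the Kummer representative. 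Counting $\mathcal{F}(X)$ thus reduces to a double count over quadratic $K$ with $\abs{\Disc(K)}=D$ and quadratic $L/K$ with relative discriminant norm $M$, subject to $DM=X$.

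In the \emph{balanced regime} $D\le X^{1/2}$ I would treat the count directly. For fixed $K$, the Dirichlet series generating quadratic extensions of $K$ by relative discriminant norm factors as an Euler product over primes of $K$ with explicit local factors, and a Perron-style contour argument extracts its contribution. Summing over $K$ by packaging the quadratic characters of $\fq[T]$ into a suitable zeta-like object, one arrives at a double Dirichlet series whose leading singularity is a double pole of $\zeta_{\fq(T)}(s_{1})\zeta_{\fq(T)}(s_{2})$ at $s_{1}=s_{2}=1$; this accounts for the main term $2X(n-1)(1-q^{-2})^{2}\prod_{P}(1+(\abs{P}+1)^{-2})$, while the simple-pole residue after one differentiation contributes the secondary term $C_{2}X$. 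In the \emph{unbalanced regime} $D>X^{1/2}$, the direct approach fails because the inner count over $L/K$ has error outstripping its main term. Here I would pass to the flipped field: the Galois closure $\tilde{L}$ contains a second $D_{4}$-quartic $L^{\vee}$, the flip, which sits over a different quadratic subfield $K^{\vee}$ inside $\tilde{L}$. Importing the explicit ramification dictionary of \cite{ASVW} — which in odd characteristic involves only tame ramification and is correspondingly cleaner than over $\mathbb{Q}$ — the $D>X^{1/2}$ case is converted into an instance of the balanced count for the flipped pair, and the two halves are reconciled.

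The main obstacle I expect is controlling the error in the transition region $D\asymp X^{1/2}$ so as to reach $X^{3/4+\epsilon}$. Friedrichsen's argument over $\mathbb{Q}$ incurs the weaker exponent $X^{11/12+\epsilon}$ essentially because of the limits of large-sieve-type bounds for averaged character sums. In the function-field setting I would instead invoke the Riemann hypothesis for curves over $\fq$ to obtain square-root cancellation in the relevant character sums, trading arithmetic averaging for algebraic input; this should account for the improved exponent, with the hypothesis $q\gg_{\epsilon}1$ entering precisely when absorbing the constants arising from the Weil bounds. Secondary technicalities to address include avoiding overcounting from the multiple choices of quadratic subfield of $\tilde{L}$ during flipping, and quantifying and discarding the negligible contribution of extensions whose Galois closure degenerates to $V_{4}$ or $C_{4}$.
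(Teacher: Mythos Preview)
Your proposal is correct and follows essentially the same route as the paper: split at $D=X^{1/2}$, count directly in the balanced regime via the quadratic-extension generating function and averaging over $K$, pass to the flipped field in the unbalanced regime using the ramification dictionary of \cite{ASVW}, and exploit the Riemann hypothesis over function fields to beat Friedrichsen's $X^{11/12+\epsilon}$ down to $X^{3/4+\epsilon}$. The one technical point you gloss over is that flipping does not quite reduce to the balanced count verbatim: the relation $C(L)=\abs{\Disc(K)}\abs{\Disc(K')}J(L)$ involves the auxiliary invariant $J(L)$ (the contribution from primes unramified in $K'$ but with both conjugates ramified in $L'$), and the paper handles the constraint $\tilde J(L')=f^{2}$ by an inclusion--exclusion over pairs $(f,g)$ before invoking the balanced machinery---but this is exactly the refinement of \cite{Friedrichsen} you cite, so your outline is on target.
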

\begin{remark}
See \eqref{altEulerprod} for a different way of writing the Euler product in the main term, which is more similar to e.g. \cite[Theorem 1]{ASVW}.    
\end{remark}
 We remark that the reason we are able to improve the error term, compared to Friedrichsen's \cite[Theorem 1.1]{Friedrichsen}, is that the Generalised Riemann Hypothesis is a theorem over function fields, whence we are able to obtain better bounds for certain sums over Hecke characters arising in the computations.

One of the key insights for our study of the one-level density is the observation that we may split $\mathcal{F}(X)$ into subfamilies of positive proportion. Specifically, we have the following result.
\begin{prop}\label{reffieldct}
    Let $0\leq \alpha\leq \beta \leq 1$ be fixed, $X=q^{2n}$ and define
    \begin{equation*}
    \mathcal{F}_{\alpha, \beta}(X) = \{(L,K)\in \mathcal{F}(X): X^\alpha <\abs{\Disc(K)}\leq X^{\beta} \}.
    \end{equation*}
     Then, for fixed $q$ larger than an absolute constant, we have
    \begin{equation*}
        \frac{\#\mathcal{F}_{\alpha,\beta}(X)}{\#\mathcal{F}(X)} \to (\beta-\alpha),
    \end{equation*}
    as $X\to \infty$. Moreover, if $0 < \alpha <\beta <1/2$, or $1/2 < \alpha < \beta < 1$, then for $q$ larger than a constant depending on $\epsilon$, we have that
    \begin{equation*}
        \#\mathcal{F}_{\alpha,\beta}(X) = 2X(1-q^{-2})^2h(n,\alpha,\beta)\prod_{P}\left(1+\frac{1}{(\abs{P}+1)^2}\right)+ \mathcal{O}\big(X^{1/2+\max(\beta,1-\alpha)/2+\epsilon}\big),
    \end{equation*}
    where
    \begin{equation*}
        h(n,\alpha,\beta) = \begin{cases}
            \lceil n(1-\alpha)\rceil-\lceil n(1-\beta)\rceil, \text{ if $1/2 < \alpha < \beta<1$,}\\ 
            \lfloor n\beta\rfloor-\lfloor n\alpha\rfloor, \text{ if $0 <\alpha < \beta < 1/2$.}
        \end{cases}
    \end{equation*}
\end{prop}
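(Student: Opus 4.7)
The plan is to revisit the counting argument behind Theorem \ref{thmfieldctall} while carrying the size of $D:=\abs{\Disc(K)}$ through the entire computation. That argument decomposes the family as
\[
\#\mathcal{F}(X) \;=\; \sum_D \sum_{\substack{K/\fq(T)\text{ quadratic}\\ \abs{\Disc(K)}=D}} N(K;X),
\]
where $N(K;X)$ counts the quadratic extensions $L/K$ such that $L/\fq(T)$ is $D_4$-quartic with $C(L)=X$. The inner double sum, evaluated for fixed $D$, is essentially independent of $D$ at the level of main terms as $D$ varies over its admissible range (roughly $q^2\le D\le q^{2(n-1)}$), and summing over this range is what produces the factor $n-1$ in Theorem \ref{thmfieldctall}. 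To extract $\#\mathcal{F}_{\alpha,\beta}(X)$ it is natural to restrict the outer sum to $X^\alpha<D\le X^\beta$. I plan to handle the two regimes $\beta<1/2$ and $\alpha>1/2$ separately and then patch them together using Theorem \ref{thmfieldctall}.

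For $0<\alpha<\beta<1/2$ I would rerun the main-term analysis of Theorem \ref{thmfieldctall} verbatim, only restricting the outer sum to the prescribed window of $D$; since every admissible $D$ contributes the same main term up to lower order, the leading coefficient acquires $h(n,\alpha,\beta)=\lfloor n\beta\rfloor-\lfloor n\alpha\rfloor$ in place of $n-1$. The delicate part is the error term: one re-examines the sums over Hecke characters of the base fields $K$ that produced the $O(X^{3/4+\epsilon})$ error in the unrestricted case, and uses GRH over function fields to bound them while tracking the restriction on $D$, obtaining the sharper bound $O(X^{1/2+\max(\beta,1-\alpha)/2+\epsilon})$. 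For $1/2<\alpha<\beta<1$ the direct approach is less effective: the inner counts $N(K;X)$ are small while the outer sum is long. Here I would invoke the flipped-field construction from \cite{ASVW,Friedrichsen}: the Galois closure $\tilde L$ of a $D_4$-quartic $L$ contains two non-conjugate quartic subfields $L,L'$, each $D_4$-quartic, whose quadratic subfields $K,K'$ are distinct. Because the irreducible two-dimensional representation $\rho$ of $D_4$ is shared, $C(L)=C(L')$, and $(L,K)\mapsto (L',K')$ is an involution on $\mathcal{F}(X)$. Using the explicit ramification data in \cite{ASVW}, one can translate the restriction $\abs{\Disc(K)}\in(X^\alpha,X^\beta]$ into the complementary window $\abs{\Disc(K')}\in[X^{1-\beta},X^{1-\alpha})$ on the flipped pair; the boundary flip from $(\,,\,]$ to $[\,,\,)$ is what produces the $\lceil\cdot\rceil$ rounding in $h(n,\alpha,\beta)=\lceil n(1-\alpha)\rceil-\lceil n(1-\beta)\rceil$. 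Applying the small-discriminant analysis to $(L',K')$ then gives the claimed main term and error $O(X^{1/2+\beta/2+\epsilon})$.

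Finally, for the ratio statement with arbitrary $0\le\alpha\le\beta\le 1$, I would cover $[\alpha,\beta]$ by $[\alpha,\tfrac12-\delta]\cup[\tfrac12-\delta,\tfrac12+\delta]\cup[\tfrac12+\delta,\beta]$ (intersected with $[\alpha,\beta]$) for a small parameter $\delta>0$. The two asymptotics above control the outer pieces, with error $o(X\log X)$ for fixed $\delta$, while the thin middle window contributes at most $O(\delta X\log X)$: there are $O(\delta n)$ admissible $D$-values in that window and each contributes $\asymp X$ to the main term, and the $O(X^{3/4+\epsilon})$ error of Theorem \ref{thmfieldctall} is swallowed. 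Dividing by $\#\mathcal{F}(X)\asymp X\log X$ and letting $\delta\to 0$ yields the limit $\beta-\alpha$. The principal obstacle throughout is the flipped-field half: establishing, with the precision required for a sharp asymptotic rather than a bijective upper bound, the discriminant correspondence between $(L,K)$ and $(L',K')$ in the function-field setting. This rests on carefully adapting the ramification-theoretic input of \cite{ASVW} and the Poisson-summation techniques of \cite{Friedrichsen} to the $\fq(T)$ setup, which is precisely where the global difficulty of the argument concentrates.
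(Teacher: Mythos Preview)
Your high-level architecture matches the paper's: direct counting for $\beta<1/2$, flipping for $\alpha>1/2$, and these together plus Theorem~\ref{thmfieldctall} give the density statement. The small-subfield half and the ratio argument are fine.

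The gap is in the flipped-field half. You assert that $C(L)=C(L')$ translates the window $\abs{\Disc(K)}\in(X^\alpha,X^\beta]$ into $\abs{\Disc(K')}\in[X^{1-\beta},X^{1-\alpha})$. This is only true when the correction factor $J(L)$ equals $1$. In general (see \cite[Section~3]{ASVW} and the paper's Section~3.3),
\[
C(L)=\abs{\Disc(K)}\cdot\abs{\Disc(K')}\cdot J(L),
\]
where $J(L)=\abs{f}^2$ for a squarefree divisor $f$ of $\fq(T)$ recording primes unramified in $K'$ whose two primes in $K'$ both ramify in $L'$. Hence $\abs{\Disc(K')}=X/(J(L)\abs{\Disc(K)})$, and the flipped window is
\[
\frac{X^{1-\beta}}{\abs{f}^2}\le \abs{\Disc(K')}<\frac{X^{1-\alpha}}{\abs{f}^2},
\]
which depends on $f$. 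You therefore cannot ``apply the small-discriminant analysis to $(L',K')$'' directly; you must sum over all squarefree $f$, use M\"obius inclusion--exclusion (an auxiliary $g$ coprime to $f$) to isolate exactly those $L'$ with $\widetilde{J}(L')=f^2$, and count quadratic extensions of $K'$ with the ramification constraint that every prime above $fg$ divides $\Disc(L'/K')$. This is why the paper needs the refined Lemma~\ref{quadLemma} with a divisor $\mathcal{B}$, and why the resulting main term is an Euler product over $f,g$ that must be evaluated (it collapses to the same constant, but this is a computation, not a tautology). Without accounting for $J(L)$, your window translation is wrong and the argument does not go through; this is the missing idea, not merely a function-field adaptation issue.
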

Our methods do in fact allow us to obtain power-saving error terms also when $\alpha,\beta$ equal $0,1/2$ or $1$, and this is indeed what leads to Theorem \ref{thmfieldctall}. However, for simplicity we state our asymptotic formula for the refined counting function $\#\mathcal{F}_{\alpha,\beta}(X)$ only in the above range.

As the subfamilies described above have positive density, we may study the one-level density of each such subfamily in order to obtain non-vanishing results for the entire family. It turns out that we obtain better results for certain values of $\alpha$ and $\beta$.
\begin{theorem}\label{onelevtot}
    Let $\eta_0 >0$ be fixed, and let both $\alpha,\beta$ be at least a distance $\eta_0$ away from $0,1/2$ and $1$. Then, if $\psi$ is an even Schwartz function whose Fourier transform is supported in $(-\sigma,\sigma)$, with $\sigma < 2-3\min(1-\alpha,\beta)-o_q(1)$, we have for the one-level density that
    \begin{equation*}
        \lim_{X\to \infty }\frac{1}{\#\mathcal{F}_{\alpha,\beta}(X)}\sum_{(L,K)\in\mathcal{F}_{\alpha,\beta}(X)}D_{L/K}(\psi)= \widehat{\psi}(0)-\frac{1}{2}\int_{-1}^1\widehat{\psi}(u)du,
    \end{equation*}
    consistent with the symplectic Katz--Sarnak prediction. The expression $D_{L/K}(\psi)$ is defined in \eqref{dlkdef}, and is the analogue of $D_f$ in \eqref{abstractonelev}.
\end{theorem}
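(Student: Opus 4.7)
The plan is to combine the explicit formula for $P_{L/K}(s)$ with the refined counting of Proposition \ref{reffieldct}, executing a character-sum argument modelled on Rudnick \cite{Rudnick} but with two extra layers of averaging: over the quadratic base $K$ and over the relative quadratic $L/K$. After setting this up, I split the prime-power sum from the explicit formula into the ranges $k=1$, $k=2$, $k\ge 3$, dispose of $k\ge 2$ directly, and reduce the theorem to bounding a $k=1$ character sum averaged over $\mathcal{F}_{\alpha,\beta}(X)$.

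Writing $P_{L/K}(s)=L(s,\chi_{L/K})$ for the quadratic Hecke character of $K$ cutting out $L$, Weil's explicit formula gives
\[
D_{L/K}(\psi)=\widehat{\psi}(0)-\frac{2}{\log X}\sum_{\substack{\mathfrak{p}\subset\mathcal{O}_K\\ k\geq 1}}\frac{\chi_{L/K}(\mathfrak{p})^{k}\log|\mathfrak{p}|}{|\mathfrak{p}|^{k/2}}\widehat{\psi}\!\left(\frac{k\log|\mathfrak{p}|}{\log X}\right)+O\!\left((\log X)^{-1}\right),
\]
with the support $\mathrm{supp}(\widehat{\psi})\subset(-\sigma,\sigma)$ localising the inner sum to $k\deg\mathfrak{p}<n\sigma$. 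The $k\ge 3$ tail is $O((\log X)^{-1})$ by the crude bound $|\chi_{L/K}|\le 1$. For $k=2$ one uses $\chi_{L/K}(\mathfrak{p})^{2}=1$ on unramified $\mathfrak{p}$; the at most $O(\log X)$ ramified primes contribute $O(\log\log X/\log X)$, and the surviving sum, which is independent of $(L,K)$, tends by the function-field prime number theorem for $K$ to $\tfrac12\int_{-1}^{1}\widehat{\psi}(u)\,du$. This is the source of the symplectic term in the statement.

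The core task is to show that the average of the $k=1$ sum vanishes. Swapping summations, this reduces to controlling
\[
M(\mathfrak{p}):=\sum_{(L,K)\in\mathcal{F}_{\alpha,\beta}(X)}\chi_{L/K}(\mathfrak{p})
\]
uniformly in primes $\mathfrak{p}$ of degree $<n\sigma$, with squares $\mathfrak{p}=\mathfrak{q}^{2}$ separated out. I parametrise $(L,K)$ in two steps: first pick the quadratic $K$ with $X^{\alpha}<|\Disc(K)|\leq X^{\beta}$, then take $L=K(\sqrt{D})$ with $D\in\mathcal{O}_{K}$ squarefree of the dictated degree, imposing the $D_{4}$-condition that $N_{K/\mathbb{F}_{q}(T)}(D)$ is non-square. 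For fixed $K$ the inner sum $\sum_{D}(D/\mathfrak{p})$ is a squarefree quadratic character sum over $\mathcal{O}_{K}$; Poisson summation on squarefree elements followed by the Riemann hypothesis for the resulting dual $L$-functions over the curve associated with $K$ yields Rudnick-type cancellation away from the case $\mathfrak{p}=\square$.

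The principal obstacle is sustaining this cancellation through the outer sum over $K$, especially in the large-$K$ regime $\alpha>1/2$. When $\beta<1/2$ the base fields form a short collection, and balancing the length of the $K$-sum, the square-root cancellation of the $L/K$-sum, and the contribution of the square primes $\mathfrak{p}=\mathfrak{q}^{2}$ (which must be extracted and shown to fit into the main term) produces the support threshold $\sigma<2-3\beta$. The large-$K$ regime $\alpha>1/2$ is handled by passing to the flipped field of the $D_{4}$-quartic, following \cite{ASVW} and Friedrichsen: to every pair $(L,K)$ with $|\Disc(K)|$ large one associates a flipped pair $(L^{\flat},K^{\flat})$ with $|\Disc(K^{\flat})|\asymp X/|\Disc(K)|$ small, and the ramification dictionary of \cite{ASVW} transfers $\chi_{L/K}$ to a corresponding character on $K^{\flat}$, reducing the analysis to the small-base case with $\beta$ replaced by $1-\alpha$. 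The two estimates combine to give the uniform bound $\sigma<2-3\min(1-\alpha,\beta)-o_{q}(1)$. The hypothesis that $\alpha,\beta$ are a fixed distance $\eta_{0}$ away from $0,1/2,1$ is used both to guarantee the power-saving error term supplied by Proposition \ref{reffieldct} and to avoid the boundary where the flip construction degenerates.
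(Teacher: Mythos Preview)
Your high-level architecture matches the paper's: explicit formula, disposal of $k\ge 2$ yielding the symplectic term, Rudnick-style cancellation in the $k=1$ sum when $\beta<1/2$, and passage to the flipped field when $\alpha>1/2$. But the flip step hides a genuine gap. The ramification dictionary of \cite{ASVW} does \emph{not} transfer $\chi_{L/K}$ to $\chi_{L'/K'}$ cleanly prime by prime: comparing splitting types over a common rational prime $P$ (their Table 1) shows that $\sum_{\mathcal{P}\mid P}\chi_{L/K}(\mathcal{P})$ and $\sum_{\mathcal{P}'\mid P}\chi_{L'/K'}(\mathcal{P}')$ differ by a ``rogue'' correction $\mathbf{1}_{\text{type }(211,11)}-\mathbf{1}_{\text{type }(22,2)}$ supported on primes $P$ of degree $m/2$. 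Bounding the average of this correction over $\mathcal{F}_{\alpha,\beta}(X)$ is a separate problem; in particular one must estimate the double-prime sum $\sum_{(L,K)}\chi_{L/K}(\mathcal{P}_1\mathcal{P}_2)$ when $P=\mathcal{P}_1\mathcal{P}_2$ splits in $K$, which requires expanding into primitive quadratic characters on $\mathrm{Cl}^0_{\mathcal{P}_1\mathcal{P}_2}(K)$ and a case analysis resembling a $2$-level-density computation. Your proposal treats the flip as a black-box reduction, so this entire piece of the argument is missing.

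A smaller but non-negligible issue is the parametrisation in the small-$K$ regime. Writing $L=K(\sqrt{D})$ with $D\in\mathcal{O}_K$ squarefree is inadequate once $g_K>0$: quadratic extensions of $K$ are parametrised by pairs $(\mathcal{A},\overline{u})$ with $\mathcal{A}$ an effective squarefree divisor that is a square in $\mathrm{Cl}(K)$ and $\overline{u}$ in the Selmer group $S(K)$, whose $2$-rank grows with the number of ramified primes of $K/\fq(T)$. The analogue of your ``Poisson on squarefree elements'' is then the expansion of $\mathbf{1}_{\mathcal{A}\in\mathrm{Cl}(K)^2}\chi_{L(\mathcal{A})/K}(\mathcal{P})$ into primitive quadratic Hecke characters on $\mathrm{Cl}^0_{\mathcal{P}}(K)$, followed by the functional equation for those Hecke $L$-functions; an indicator $\mathbf{1}_{\lvert S(K)\rvert=\lvert S_{\mathcal{P}}(K)\rvert}$ appears and must itself be unwound before one can execute the sum over $\mathcal{P}$. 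This machinery is what replaces quadratic reciprocity over $\fq(T)$, and the factor $3\beta$ in the support threshold comes out of balancing the dual sum length $2g_K-2+m-b$ against the averaging over $K$, not from the coarse ``balance the lengths'' heuristic you sketch.
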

This improves results of Durlanik \cite[Theorem 3]{Durlanik}, who proved an analogue of the above theorem over $\mathbb{Q}$. Assuming the Generalised Lindelöf Hypothesis, he was able to prove the Katz--Sarnak conjecture when $\sigma < 1-\beta$. We remark that the inclusion of the parameter $\eta_0$ ensures that the convergence is uniform in $\alpha$ and $\beta$. It is possible to extend the results to allow for the possibility that $\alpha,\beta\in \{0,1/2,1\}$, but this is slightly more technical, and Theorem \ref{onelevtot} suffices for proving Theorem \ref{introthmnonvanish}. We remark that Proposition \ref{quadprop} essentially follows from the above theorem.

\subsection{Outline}
We begin in Section \ref{prelch} by recalling preliminaries concerning the arithmetic of function fields in general, and $D_4$-quartic fields in particular. We begin our computations in Section \ref{countfields}, where we count $D_4$-quartic function fields using the methods developed by Friedrichsen \cite{Friedrichsen}. Specifically, we study the subfamily $\mathcal{F}_{\alpha,\beta}(X)$, obtaining results leading to Theorem \ref{thmfieldctall} and Proposition \ref{reffieldct}.

In Section \ref{smallsubfldch}, we begin our study of the one-level density, obtaining Proposition \ref{smallalphabetaonelevdensthm}, proving Theorem \ref{onelevtot} for subfamilies of $\mathcal{F}(X)$ with a small enough quadratic subfield, as well as proving Proposition \ref{quadprop}. The main idea used to extend the support past the maximum of $\sigma < 1$ obtained by \cite{Durlanik} is to expand the Kronecker character $\chi_{L/K}(\mathcal{P})$ of a quadratic extension $L/K$ into a sum of Hecke characters on the ray class group modulo $\mathcal{P}$ and then apply the functional equation, and sum the resulting characters over $\mathcal{P}$. This is an extension of the approach used by Rudnick \cite{Rudnick}, who used quadratic reciprocity of Kronecker symbols.

As we are working with fields $K$ of genus larger than $0$, the reciprocal $1/\zeta_K(u)$ is no longer a polynomial, which complicates the computations somewhat. To handle this, we apply ideas used over $\mathbb{Q}$ by Gao \cite{Gao} to take care of sums of Möbius values. Moreover, compared to the computations over $\fq(T)$, our methods require analysis in the form of contour integration in order to obtain the uniformity, over the quadratic subfield $K$, required for our results, see e.g. \eqref{toshift} and \eqref{unifK}.

Next, in Section \ref{largesubfldsect}, we finish the proof of Theorem \ref{onelevtot} by proving Proposition \ref{bigsubfieldonelevthm}. The computations in this section are more delicate than those in the previous section. First, we apply precise results for the splitting of a rational prime $P\in \fq(T)$ in $L$ compared to its splitting in the flipped field $L'$, obtained in \cite[Table 1]{ASVW}, in order to turn sums over $\chi_{L/K}$ into sums over $\chi_{L'/K'}$. 

The splitting of a prime $P$ in $L/K$ is not completely analogous to its splitting in $L'/K'$, which requires us to bound a certain remainder term consisting of sums over "rogue" splitting types in Section \ref{roguech}. The rogue splitting types are quite rare, whence we only need to establish a very small degree of cancellation in these sums. In particular, we need to bound the contribution from character sums involving two primes simultaneously, see \eqref{doubleprime}. This leads to computations similar to those which would be used to study the $2$-level density of our family. Moreover, we also need to bound the difference of the number of fields where $P$ is inert, and where $P$ splits, weighted with certain arithmetic factors. 

Having estimated the contribution from the rogue splitting types, we move on to bounding character sums in the flipped field. To accomplish this, we use methods from \cite{Friedrichsen}, which by virtue of essentially being completely combinatorial, allow us to "count" the fields with a weight $\chi_{L'/K'}$, see \eqref{sumfg}. Bounding such character sums can be done with methods very similar to those used in Section \ref{smallsubfldch}. There are some difficulties arising from the additional summation and inclusion-exclusion, but this can be handled without too many changes to the approach from Section ~\ref{smallsubfldch}.

Finally, in Section \ref{nonvanishsect}, we apply Theorem \ref{onelevtot} in order to prove Theorem \ref{introthmnonvanish}, establishing non-vanishing for a significant proportion of the elements in $\mathcal{F}(X)$. The main method of converting one-level density results to non-vanishing results by choosing an appropriate test function remains the same as in e.g. \cite{Ozluk-Snyder}. However, the main insight we use to maximise the resulting proportion of non-vanishing is to split $\mathcal{F}(X)$ into subfamilies $\mathcal{F}_{\alpha,\beta}(X)$ in order to obtain as large of an admissible support $\sigma$ as possible, on average. Specifically, by splitting $\mathcal{F}(X)$ into smaller and smaller families, we obtain a bound for the total rate of non-vanishing in terms of a certain integral, see \eqref{nonvanishingeq}. Using the optimal test functions due to J. Vanderkam, and explicitly written down in \cite{Freeman}, we obtain Theorem \ref{introthmnonvanish}.
\subsection{Acknowledgements}
I want to thank Anders Södergren for introducing me to the problem of counting $D_4$-fields by conductor, and the problem of proving non-vanishing for families of $L$-functions at the central point. I also want to thank Christian Johansson for helpful discussions regarding the Galois theory of $D_4$-fields.

\subsection{Conventions}
Given some set $S$, and functions $f: S\to \mathbb{C}$ and $g: S\to \mathbb{R}_{\geq 0}$, we write $f = \mathcal{O}(g)$, if there is some constant $C$ such that $\abs{f(x)}\leq Cg(x)$ for all $x\in S$. Alternatively, we write this as $f\ll g$. Similarly, we write $f = h + \mathcal{O}(g)$ if $f-h\ll g$. Furthermore, if $f\ll g$ and $g\ll f$, then we write $f\asymp g$. The constant $C$ above is called the implied constant, and its dependence on a collection of variables $\mathcal{D}$ is indicated by writing $\mathcal{O}_\mathcal{D}$ or $\ll_\mathcal{D}$. We will allow all of the implied constants to depend on the variables $q$ and $\epsilon$, without indicating this.

We write $o(1)$ for a term that tends to $0$ as $X$, or equivalently $n$, tends to infinity. Furthermore, we write $o_q(1)$ for a term that tends to $0$ as $q\to \infty$. Sometimes, we shall make use of the non-standard notation $f\sim g$, meaning that $f=g+o(1)$.

For a Schwartz function $\psi$, its Fourier transform is the function
\begin{equation*}
    \widehat{\psi}(u) = \int_{\mathbb{R}}\psi(x)e^{-2\pi i x u }dx.
\end{equation*}
In particular, if $\psi$ is real and even, then its Fourier transform is real and even as well.

We will often study sums over divisors in some divisor group $D_K$. All divisors appearing in such sums are implicitly assumed to be effective, unless explicitly stated otherwise. Furthermore, if the field $K$ is understood from the context, we will simply write $\mathrm{Cl}$ instead of $\mathrm{Cl}(K)$.

\section{Preliminaries}\label{prelch}
In this section, we provide some background on function field extensions and their $L$-functions, as well as the Galois theory of $D_4$-quartic extensions. For a more thorough treatment of the theory of function fields, see \cite{Rosen}.

\subsection{Function fields and $L$-functions}
Let $K$ be a function field, by which we mean a finite separable extension of $\fq(T)$. A prime of $K$ is a discrete valuation $\nu$ such that $K$ is the fraction field of the corresponding ring $\mathcal{O}_\nu= \{x\in K: \nu(x)\geq 0\}$. The fractional ideal $\mathcal{P} := \{x\in K:\nu(x) \geq 1\}$ is also often referred to as a prime of $K$, and we sometimes write $\mathcal{O}_\mathcal{P}$ for $\mathcal{O}_\nu$.

Given a function field $K$, we may consider the group of divisors $D_K$ of $K$, which is the abelian free group generated by the primes of $K$. We will use multiplicative notation for the operation of this group. We have a map $K^*\to D_K$, given by mapping
\begin{equation*}
    x\mapsto \prod_{\mathcal{P} \text{ in $K$}} \mathcal{P}^{\nu_\mathcal{P}(x)},
\end{equation*}
where $\nu_P$ is the valuation corresponding to $P$. The kernel of this map is the constant field $\fq$, and its image is the group of principal divisors. The class group $\mathrm{Cl}(K)$ of $K$ is the quotient of $D_K$ with the principal divisors.

An element
\begin{equation*}
    D =\prod_{\mathcal{P}} \mathcal{P}^{i_{\mathcal{P}}}
\end{equation*}
in $D_K$ is called effective if all $i_{\mathcal{P}}\geq 0$. The degree of a prime $\mathcal{P}$ is defined to be the dimension of the residue field $\mathcal{O}_\mathcal{P}/\mathcal{P}$ over $\fq$. Moreover, the degree $\deg_K(D)$ of $D$ is defined to be the sum the (finitely many) nonzero $i_{\mathcal{P}}\deg(\mathcal{P})$. The norm, or absolute value (with respect to $K$) of a divisor $D$ is $\abs{D}_K := q^{\deg_K D}$.

The image of $K^*$ in $D_K$ is contained in the subgroup of divisors of degree $0$, by the function field analogue of the usual product formula. In particular, this lets us define the degree of an element in $\mathrm{Cl}(K)$. Unlike the number field case, the class group $\mathrm{Cl}(K)$ is never finite, however the group $\mathrm{Cl}^0(K)$ consisting of divisor classes of degree zero is finite.

If $L/K$ is a separable extension of function fields, then each prime $\mathfrak{P}$ in $L$ lies over some prime $\mathcal{P}$ in $K$, in the sense that $\nu_\mathfrak{P}$ resticts to a multiple of $\nu_\mathcal{P}$ in $K^*$, and in this case we write $\mathfrak{P}\mid \mathcal{P}$. More precisely, in $K^*$, $\nu_\mathcal{P}$ decomposes into a sum $e_1\nu_{\mathfrak{P}_1}+...+e_r\nu_{\mathfrak{P}_r}$. The number $e_i =: e(\mathfrak{P}_i/\mathcal{P})$ is called the ramification degree of $\mathfrak{P}_i\mid \mathcal{P}$. Moreover, we define the inertial degree $f(\mathfrak{P}/\mathcal{P})$ of $\mathfrak{P}\mid \mathcal{P}$ as the dimension of $\mathcal{O}_\mathfrak{P}/\mathfrak{P}$ over $\mathcal{O}_\mathcal{P}/\mathcal{P}$. This allows us to define a norm map
\begin{equation*}
    N_{L/K}: D_L\to D_K,
\end{equation*}
induced by $\mathfrak{P}\mapsto \mathcal{P}^{f(\mathfrak{P}/\mathcal{P})}$, where $\mathfrak{P}\mid \mathcal{P}.$

The integers $e(\mathfrak{P}/\mathcal{P})$, $f(\mathfrak{P}/\mathcal{P})$ and $r$ are related through the relation
\begin{equation*}
    \sum_{i=1}^re(\mathfrak{P}_i/\mathcal{P})f(\mathfrak{P}_i/\mathcal{P}) = [L:K].
\end{equation*}
In particular, if $L/K$ is quadratic, then there are three cases. The totally split case, when $r=2$. The ramified case when $r=1$, and $e(\mathfrak{P}/\mathcal{P}) = 2$ and finally the inert case when $r=1$, but $f(\mathfrak{P}/\mathcal{P})=2$. Note also that if one interprets $\mathcal{P}$ as a product of prime ideals in $L$, then $N_{L/K}(\mathcal{P}) = \mathcal{P}^{[L:K]}$.

The Dedekind zeta function associated with $K$ is the function
\begin{equation*}
    \zeta_K(s) = \sum_{D\geq 0}\frac{1}{\abs{D}_K^s} = \prod_{\mathcal{P}}\left(1-\abs{\mathcal{P}}^{-s}\right)^{-1}.
\end{equation*}
Often, one makes the substitution $u=q^{-s}$, whence we may write, with some abuse of notation,
\begin{equation*}
    \zeta_K(u) = \sum_{\mathcal{A}\geq 0}u^{\deg \mathcal{A}}=\sum_{k = 1}^\infty \left(\sum_{\deg_K \mathcal{A} = k}1\right)u^k = \prod_{\mathcal{P}}\left(1-u^{\deg \mathcal{P}}\right)^{-1}.
\end{equation*}
As over $\mathbb{Q}$, the sum and product above converges absolutely when $\Re(s) > 1$, or equivalently, when $\abs{u} < q^{-1}$. 

The primes of $K=\fq(T)$ have a particularly simple description. Indeed, they either correspond to irreducible (prime) polynomials, or they correspond to the so-called prime at infinity, given by the discrete valuation $v_\infty(f/g) = \deg(g)-\deg(f)$. Using this description of the primes, together with the fact that there are $q^k$ monic polynomials of degree $k$, we find that
\begin{equation*}
    \zeta_{\fq(T)}(u) = \frac{1}{(1-qu)(1-u)}.
\end{equation*}
In the sequel, we simply write $\zeta:=\zeta_{\fq(T)}$.

For a general $K$, we instead have the equality
\begin{equation}\label{zetaexpl}
    \zeta_K(u) = \frac{P_K(u)}{(1-qu)(1-u)},
\end{equation}
where $P_K(u)$ is a polynomial of degree $2g_K$, for an integer $g_K$ known as the genus of $K$. Moreover, all coefficients of $P_K(u)$ are integers. The case when $K$ is a quadratic extension of $\fq(T)$ will be of special interest to us. In this case, one sees by using the Euler product for $\zeta_K(u)$ that
\begin{equation}\label{Pkdef}
    P_K(u) = \sum_{\ell=0}^{2g_K}u^\ell \left(\sum_{\deg_K\mathcal{A} = \ell}\chi_K(\mathcal{A})\right),
\end{equation}
where $\chi_K$ is completely multiplicative on $D_K$ and defined on primes by
\begin{equation*}
    \chi_K(\mathcal{P}) = \begin{cases}
        1, &\text{ if $\mathcal{P}$ splits in $K$,}\\
        -1, &\text{ if $\mathcal{P}$ is inert in $K$,}\\
        0, &\text{ if $\mathcal{P}$ ramifies in $K$.}
    \end{cases}
\end{equation*}

The Riemann Hypothesis holds for a large class of $L$-functions over function fields. In particular, it holds for the polynomial $P_K(u)$ above, so that all roots of this polynomial have absolute value $q^{-1/2}$, see \cite[Theorem 5.10]{Rosen}. Moreover, these $L$-functions satisfy a functional equation, given by
\begin{equation*}
    \xi_K(u)=\xi_K(1/qu),
\end{equation*}
with 
\begin{equation*}
    \xi_K(u) = \zeta_K(u)u^{1-g_K}.
\end{equation*}

When $K/\fq(T)$ is geometric, meaning that the field of constants of $K$ is $\fq$, the genus is related to the discriminant $\mathrm{Disc}(K/\fq(T))$ of the extension $K/\fq(T)$ through the Riemann--Hurwitz formula, see \cite[Theorem 7.16]{Rosen},
\begin{equation*}
    2g_K-2 = (2g_{\fq(T)}-2)[K:\fq(T)]+\deg_K(\mathrm{Disc}(K/\fq(T))),
\end{equation*}
where $g_{\fq(T)} = 0$. The above formula also holds true when $K/\fq(T)$ is replaced by an arbitrary finite, separable, geometric extension of function fields. In particular, the Riemann--Hurwitz formula implies that the discriminant is always an even integer power of $q$.

We will not state the general definition of the discriminant here, instead we note that for our purposes, it suffices to know that for a quadratic extension $L/K$, 
\begin{equation*}
    \mathrm{Disc}(L/K) = \prod_{\mathfrak{P}} N_{L/K}\big(\mathfrak{P}\big)^{e(\mathfrak{P}/\mathcal{P})-1}\in D_K,
\end{equation*}
assuming that $2\nmid q$.

\subsection{The Galois theory of $D_4$-fields and Artin $L$-functions}
We now provide a very brief introduction to the Galois theory of $D_4$-fields, and their $L$-functions, see also \cite[Section 2]{ASVW} and \cite[Section 3]{Durlanik}. 

The group $D_4$ is generated by a rotation $\sigma$ of order $4$, and a reflection $\tau$ of order $2$. These are related through the relation $\tau^{-1}\sigma \tau = \sigma^3$. If $L$ is a $D_4$-quartic field, then its Galois closure $M$ is Galois, with Galois group $D_4$, with a subfield lattice corresponding to the subgroup lattice of $D_4$, see the figure below, \cite[Section 3]{Durlanik}.\newline
\begin{equation}
\begin{tikzcd}[every arrow/.append style={dash}]
&&\{e\} \ar[dll]\ar[dl]\ar[d]\ar[dr]\ar[drr]    \\
\langle \tau\rangle\ar[dr] & \langle \sigma^2\tau\rangle \ar[d] & \langle \sigma^2\rangle \ar[d]\ar[dl]\ar[dr] &\langle \sigma^3 \tau\rangle \ar[d] &\langle \sigma\tau \rangle\ar[dl]    \\
&\langle \tau,\sigma^2 \rangle\ar[dr]&\langle \sigma \rangle\ar[d]&\langle \sigma \tau,\sigma^2\rangle\ar[dl] \\
&& D_4 
\end{tikzcd}\,\,\,
\begin{tikzcd}[every arrow/.append style={dash}]
&& M \ar[dll]\ar[dl]\ar[d]\ar[dr]\ar[drr]    \\
L\ar[dr] & L_1 \ar[d] & F \ar[d]\ar[dl]\ar[dr] &L_1' \ar[d] & L'\ar[dl]    \\
&K\ar[dr]& E \ar[d]& K'\ar[dl] \\
&&\fq(T) 
\end{tikzcd}
\end{equation}

The fields $L$ and $L_1$ are isomorphic over $\fq(T)$. Similarly, the fields $L'_1$ and $L'$ are isomorphic, and also $D_4$-quartic. The fields $L$ and $L'$ are not isomorphic, however they are related in a different way. Indeed, there is a certain outer automorphism $\phi$ of $D_4$, mapping $\mathrm{Gal}(M/L)$ to $\mathrm{Gal}(M/L')$.

The group $D_4$ has a unique irreducible representation of dimension $2$, see \cite[Section 3]{Durlanik} for an explicit description of this representation. We are interested in the Artin $L$-function associated with this representation. By \cite[Proposition 3.1]{Durlanik}, this Artin $L$-function is in fact equal to $\zeta_L(u)/\zeta_K(u)$. For the general definition of an Artin $L$-function, see \cite[Chapter VII, §10]{Neukirch}. From this and the Riemann-Hurwitz formula, it follows, see \cite[Corollary 3.2]{Durlanik}, that up to a scaling factor $q^{-4}$, the so-called Artin conductor appearing in the functional equation of this $L$-function equals
\begin{equation*}
    \abs{\mathrm{Disc}(K)}\abs{N_{L/K}(\mathrm{Disc}(L/K))} =: C(L).
\end{equation*}

\section{Counting $D_4$-quartic fields}\label{countfields}
We begin with the task of estimating $\#\mathcal{F}(X)$ for $X=q^{2n}$. In fact we will need estimates for the refined counting function
\begin{equation*}
    \mathcal{F}_{\alpha,\beta}(X) := \{(L,K)\in \mathcal{F}(X): X^{\alpha} < \abs{\Disc(K_L)}\leq X^{\beta}\},
\end{equation*}
 for $0\leq \alpha\leq \beta\leq 1$. We remark that $\mathcal{F}(X) = \mathcal{F}_{0,1}(X) =\mathcal{F}_{0,1/2}(X)+\mathcal{F}_{1/2,1}(X) $.

The starting point for estimating $\#\mathcal{F}(X)$ is the observation that counting $D_4$-quartic fields of discriminant $X$ is essentially equivalent to counting fields $L$ which are quadratic extensions of a quadratic field $K$ over $\fq(T)$. To make this precise, we define the set $\Tilde{\mathcal{F}}_{\alpha,\beta}(X) = \{(L,K): [L:\fq(T)]=4, \,[L:K] = 2, \, X^{\alpha} <\abs{\Disc(K)}\leq X^\beta \}$. Then, we have the following lemma, analogous to \cite[Lemma 4.4]{ASVW}.
\begin{lemma}\label{quadonquadlemma}
    We have that
    \begin{equation*}
        \#\Tilde{\mathcal{F}}_{\alpha,\beta}(X)-\#\mathcal{F}_{\alpha,\beta}(X)\ll_\epsilon X^{(1+\beta)/2+\epsilon}.
    \end{equation*}
\end{lemma}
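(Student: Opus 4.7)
The plan is to identify precisely which pairs belong to $\Tilde{\mathcal{F}}_{\alpha,\beta}(X)\setminus \mathcal{F}_{\alpha,\beta}(X)$ and bound this set directly. Since $\mathcal{F}_{\alpha,\beta}(X)$ requires $L/\fq(T)$ to be $D_4$-quartic while $\Tilde{\mathcal{F}}_{\alpha,\beta}(X)$ only requires $L$ to be a quartic field admitting $K$ as an index-$2$ subfield, the difference consists of pairs $(L,K)$ whose Galois closure over $\fq(T)$ is not $D_4$. The transitive subgroups of $S_4$ admitting a subgroup of index $2$ are $V_4$, $C_4$ and $D_4$, so the bad $L$'s are biquadratic or cyclic quartic (ignoring a negligible contribution from non-geometric $L$). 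For each such pair the discriminant-tower formula combined with the definition of $C(L)$ yields $\abs{\Disc(L)} = C(L)\cdot\abs{\Disc(K)} \leq X^{1+\beta}$, so it suffices to bound the number of $V_4$- and $C_4$-quartic pairs $(L,K)$ with $\abs{\Disc(L)} \leq X^{1+\beta}$.

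For the biquadratic contribution, each $V_4$-quartic $L$ contains three quadratic subfields and hence yields three pairs in $\Tilde{\mathcal{F}}_{\alpha,\beta}(X)$. Writing $L = \fq(T)(\sqrt{D_1},\sqrt{D_2})$ with $D_1,D_2$ distinct nontrivial classes in $\fq(T)^{\times}/(\fq(T)^{\times})^2$ represented by monic squarefree polynomials, and decomposing $D_1 = gA$, $D_2 = gB$ with $g,A,B$ pairwise coprime monic squarefree, the third quadratic subfield is cut out by $AB$; the conductor-discriminant formula applied to the three nontrivial characters of $V_4$ then gives $\abs{\Disc(L)} \asymp q^{2\deg(gAB)}$. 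Counting triples of pairwise coprime monic squarefree polynomials with $\deg(gAB) \leq \tfrac{1+\beta}{2}\log_q X + O(1)$ is a routine inclusion-exclusion on $1/\zeta_{\fq(T)}(s)^3$ and gives $O(X^{(1+\beta)/2+\epsilon})$ biquadratic $L$, which absorbs the factor of three.

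For the cyclic quartic contribution, each such $L$ admits a unique quadratic subfield, so contributes one pair. Writing $\chi$ for an order-$4$ Hecke character cutting out $L$, one has $\chi^2 = \chi_K$, and the conductor-discriminant formula becomes $\abs{\Disc(L)} = \mathfrak{f}(\chi)^2\abs{\Disc(K)}$, forcing $\mathfrak{f}(\chi)^2 = X$. Counting order-$4$ ray class characters of $\fq(T)$ of conductor exactly $\sqrt{X}$ is a standard calculation giving $O(X^{1/2+\epsilon}) \leq O(X^{(1+\beta)/2+\epsilon})$, so adding this to the $V_4$ bound yields the lemma. The main technical nuisance is verifying the conductor-discriminant relations over $\fq(T)$ with proper accounting for the infinite prime and constant-field factors, so that the degree estimate $\abs{\Disc(L)} \asymp q^{2\deg(gAB)}$ survives up to multiplicative $O(1)$-constants that do not affect the exponent; in odd characteristic this is routine, and the counting step is then an elementary geometric-series argument.
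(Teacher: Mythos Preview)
Your proposal is correct and follows the same approach as the paper, which simply cites \cite[Lemma~4.4]{ASVW} and \cite[Lemma~3.2]{Keliher} for precisely the argument you have spelled out: the excess consists of abelian ($V_4$ or $C_4$) quartic pairs, and one bounds these via the discriminant relation $\abs{\Disc(L)}=C(L)\abs{\Disc(K)}\le X^{1+\beta}$ together with the standard $O(Y^{1/2+\epsilon})$ count for Galois quartics of discriminant $\le Y$. One minor imprecision worth fixing: the relevant group-theoretic condition is not that the transitive $G\subseteq S_4$ admits an index-$2$ subgroup (which $S_4$ itself does, namely $A_4$), but that the point stabiliser $H$ of index $4$ is contained in one; since $S_3\not\subseteq A_4$, your conclusion that only $V_4$, $C_4$, $D_4$ arise is still correct.
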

\begin{proof}
    The proof is identical to that of \cite[Lemma 4.4]{ASVW}, except that one uses \cite[Lemma 3.2]{Keliher} to bound the contribution from the Galois fields.
\end{proof}
When $\beta$ is sufficiently small, we may use the above lemma to estimate $\mathcal{F}_{\alpha,\beta}(X)$. However, when $\beta$ is close to $1$ we require some additional preparation before starting the counting process.

When counting all extensions in some fixed algebraic closure, we have the relation
\begin{equation}\label{tildeFormula}
    \#\Tilde{\mathcal{F}}_{\alpha,\beta}(X) = \sum_\sumstack{j\geq 1\\ X^{\alpha} < q^{2j}\leq X^\beta}\sum_\sumstack{[K:\fq(T)]=2\\ \abs{\Disc(K)}=q^{2j} }\sum_\sumstack{[L:K]=2\\\abs{\mathrm{Disc}(L/K)}=X/q^{2j}}1.
\end{equation}
If one counts each isomorphism class only once, multiplying the above by a $1/2$ in order to take the two conjugate $D_4$ fields into account yields an accurate formula up to the error from Lemma \ref{quadonquadlemma}. This motivates us to study the innermost sum above, counting the number of quadratic extensions of $K$.
\subsection{Counting the number of quadratic extensions}\label{quadsect}
In this section, we will let $M$ be a finite extension of $\fq(T)$. We will only be interested in the cases when $M$ is either a quadratic extension $K$ of $\fq(T)$, or $\fq(T)$ itself. We seek to estimate the sum
\begin{equation}\label{quadnoramcount}
    \sum_\sumstack{[L:M]=2\\\abs{\mathrm{Disc}(L/M)}=Y}1,
\end{equation}
for some $Y=q^{2r} > 1$. To later handle the case when $\beta$ is large, we will in fact need to study the slightly more refined counting function
\begin{equation}\label{quadramcount}
    \sum_\sumstack{[L:M]=2\\\abs{\mathrm{Disc}(L/M)}=Y\\ \mathcal{B}\mid \mathrm{Disc}(L/M)}1,
\end{equation}
where $\mathcal{B}$ is a product of distinct primes in $M$. We remark that $\eqref{quadnoramcount}$ is studied in \cite[Section 2]{Keliher}, and we will use similar methods for studying \eqref{quadramcount}, with some changes to accommodate the required ramification.

To begin, we let $V(M)$ denote the so-called virtual $2$-units in $M$, i.e. the elements in $M^{*}$ whose valuation at every prime is even. We then have the following lemma, cf. \cite[Lemma 3.3]{CDyDO} and the proof of \cite[Lemma 2.1]{Keliher}.
\begin{lemma}\label{quadbij}
    Let $M$ be a function field extension of $\fq(T)$ (with $2\nmid q$ as usual). Then the quadratic extensions of $M$ are in bijection with pairs $(\mathcal{A},\overline{u})$ with $\mathcal{A}$ being an effective, squarefree divisor and there existing $\alpha_0$ and $\mathcal{Q}$ such that $\mathcal{A}\mathcal{Q}^2 = (\alpha_0)$. Furthermore, $\overline{u}\in S(M) := V(M)/(M^*)^2$, and in the case that $\mathcal{A} = 0$, we require that $\overline{u} \neq \overline{\alpha_0^{-1}} \in S(M)$. In all cases, the discriminant of the extension corresponding to $(\mathcal{A},\overline{u})$ is $\mathcal{A}$.
\end{lemma}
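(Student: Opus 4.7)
The strategy is to invoke Kummer theory. Since $q$ is odd (so $\operatorname{char}(M)\neq 2$), the quadratic extensions of $M$ correspond bijectively to the nontrivial classes in $M^*/(M^*)^2$ via $\overline{\alpha}\mapsto M(\sqrt{\alpha})$; my task is to reparametrize these classes by pairs $(\mathcal{A},\overline{u})$ of the stated form.

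First, to each $\alpha\in M^*$ I attach its \emph{squarefree part} $\mathcal{A}$, the effective divisor whose coefficient at each prime $\mathcal{P}$ equals $v_\mathcal{P}(\alpha)\bmod 2$. Then $(\alpha)=\mathcal{A}\cdot Q_\alpha^2$ for a divisor $Q_\alpha\in D_M$ (possibly non-effective), and since multiplying $\alpha$ by a square alters only $Q_\alpha$, the divisor $\mathcal{A}$ depends only on $\overline{\alpha}\in M^*/(M^*)^2$. The principality of $(\alpha)$ then forces $[\mathcal{A}]$ to be a square in $\mathrm{Cl}(M)$, so there exist $\alpha_0\in M^*$ and $Q\in D_M$ with $\mathcal{A}Q^2=(\alpha_0)$. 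I would call such $\mathcal{A}$ \emph{admissible} and fix once and for all a representative pair $(\alpha_0,Q)$ for each admissible $\mathcal{A}$.

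With these choices in place, I define the map $\overline{\alpha}\mapsto(\mathcal{A},\overline{u})$ with $\overline{u}:=\overline{\alpha/\alpha_0}\in S(M)$. The element $\alpha/\alpha_0$ lies in $V(M)$ because its divisor $(\alpha/\alpha_0)=(Q_\alpha Q^{-1})^2$ has even valuation at every prime. The inverse map sends $(\mathcal{A},\overline{u})$ to $\overline{u\alpha_0}$, and well-definedness plus mutual inverseness reduce to the observation that altering $\alpha$ by a square leaves both $\mathcal{A}$ and $\overline{u}$ unchanged. This yields a bijection between all of $M^*/(M^*)^2$ and the admissible pairs $(\mathcal{A},\overline{u})$.

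Finally, I identify the class $\overline{1}$ that must be excluded in order to recover genuine quadratic extensions. If $\mathcal{A}\neq 0$, then some prime $\mathcal{P}$ in its support gives $v_\mathcal{P}(u\alpha_0)$ odd, so $u\alpha_0$ cannot be a square regardless of $\overline{u}$, and no restriction on $\overline{u}$ is required. If $\mathcal{A}=0$, then $u\alpha_0\in V(M)$, and it lies in $(M^*)^2$ precisely when $\overline{u}=\overline{\alpha_0^{-1}}$ in $S(M)$, which is exactly the excluded value in the lemma. The only real obstacle is the bookkeeping surrounding the choices of $(\alpha_0,Q)$, which I expect to be purely routine.
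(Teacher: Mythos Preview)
Your proposal is correct and follows the standard Kummer-theoretic argument that the paper defers to by citing \cite[Lemma 2.1]{Keliher} (cf.\ also \cite[Lemma 3.3]{CDyDO}); the paper gives no independent proof. Your decomposition of $\overline{\alpha}\in M^*/(M^*)^2$ into the squarefree part $\mathcal{A}$ of $(\alpha)$ together with the virtual $2$-unit class $\overline{\alpha/\alpha_0}$ is exactly the parametrisation intended, and your identification of the excluded pair when $\mathcal{A}$ is trivial is correct.
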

\begin{proof}
    See the proof of \cite[Lemma 2.1]{Keliher}.
\end{proof}
Write $D_M$ for the divisor group of $M$ and $\Cl(M)$ for the associated divisor class group. Over function fields, $\Cl(M)$ is always infinite, however it contains the finite subgroup $\Cl^0(M)$, containing the divisor classes of degree zero. As every function field contains divisors of degree equal to $1$, every divisor $\mathcal{A}$ is a product of a divisor of degree zero, and one of degree $\deg_M(A)$. Moreover, if $\mathcal{A}$ is of even degree, which is the case that we are interested in, then the factor of degree zero and $\mathcal{A}$ are equal in $\Cl(M)/\Cl(M)^2$. 

Using that
\begin{equation*}
    \abs{G/G^2} = \abs{G[2]},
\end{equation*}
for an arbitrary finite abelian group $G$, we conclude that \eqref{quadramcount} equals
\begin{equation*}
    \abs{S(M)}\sum_\sumstack{ \mathcal{B}\mid \mathcal{A}\in D_M\\\abs{\mathcal{A}}=Y }\mu^2(\mathcal{A})\mathbf{1}_{\mathcal{A}\in \Cl^2_M} = \frac{\abs{S(M)}}{\abs{\Cl(M)/\Cl(M)^2}}\sum_{\chi \in \widehat{\Cl^0(M)}[2]}\sum_\sumstack{ \mathcal{B}\mid \mathcal{A}\in D_M\\\abs{\mathcal{A}}=Y }\mu^2(\mathcal{A})\chi(\mathcal{A}),
\end{equation*}
using orthogonality of characters. We recall that the sum is only over effective divisors $\mathcal{A}$, and this condition is implicit in all our sums. Factoring $\mathcal{B}$ from $\mathcal{A}$, we may rewrite the above as
\begin{equation}\label{quadcountsum}
    \frac{\abs{S(M)}}{\abs{\Cl^0(M)/\Cl^0(M)^2}}\sum_{\chi \in \widehat{\Cl^0(M)}[2]}\chi(\mathcal{B})\sum_\sumstack{ (\mathcal{B}, \mathcal{A})=1\\\abs{\mathcal{A}}=Y/\abs{\mathcal{B}} }\mu^2(\mathcal{A})\chi(\mathcal{A}).
\end{equation}

Recall that $\abs{Y}/\abs{\mathcal{B}} = q^{2r-\deg_M(\mathcal{B})}$. Hence, the innermost sum above is the $(2r-\deg_M(\mathcal{B})$)th coefficient of the power-series
\begin{equation*}
     \sum_\sumstack{ (\mathcal{B}, \mathcal{A})=1 }\mu^2(\mathcal{A})\chi(\mathcal{A})u^{\deg(\mathcal{A})} = \prod_{\mathcal{P}\nmid \mathcal{B}}\left(1+\chi(\mathcal{P})u^{\deg_M(\mathcal{P})}\right),
\end{equation*}
where we used multiplicativity to obtain the equality. As $\chi^2=1$, the Euler product in the right-hand side equals
\begin{equation*}
    \prod_{\mathcal{P}\mid \mathcal{B}}\left(1+\chi(\mathcal{P})u^{\deg_M(\mathcal{P})}\right)^{-1}\prod_{\mathcal{P}}\left(\frac{1-\chi(\mathcal{P})u^{\deg_M(\mathcal{P})}}{1-u^{2\deg_M(\mathcal{P})}}\right)^{-1} = \prod_{\mathcal{P}\mid \mathcal{B}}\left(1+\chi(\mathcal{P})u^{\deg_M(\mathcal{P})}\right)^{-1}\frac{L(u,\chi)}{\zeta_M(u^2)},
\end{equation*}
where
\begin{equation*}
    L(u,\chi) = \sum_{\mathcal{A}\in D_M}\chi(A)u^{\deg A}.
\end{equation*}
Both $L(u,\chi)$ and $\zeta_M(u)$ are absolutely convergent when $\abs{u} < q^{-1}$. As the Riemann hypothesis is known to hold, all zeros of $\zeta_M(u)$ lie on the circle $\abs{u}=q^{-1/2}$. The function $L(u,\chi)$ is a quadratic Hecke $L$-function, and the Riemann hypothesis is known for these functions as well. Assuming that $\chi$ is nontrivial, $L(u,\chi)$ is a polynomial in $u$, see \cite[Theorem 9.24]{Rosen}, and in the case when $\chi=\chi_0$ is trivial, we have that $L(u,\chi) = \zeta_M(u)$. 

We may extract coefficients from these $L$-functions using contour integration and Cauchy's formula. The reader more familiar with number fields should compare this to Perron's formula. Specifically, the innermost sum in \eqref{quadcountsum} equals
\begin{equation*}
    \frac{1}{2\pi i}\int_{\abs{u}=q^{-2}}\frac{1}{u^{2r-\deg_M(\mathcal{B})+1}}\prod_{\mathcal{P}\mid \mathcal{B}}\left(1+\chi(\mathcal{P})u^{\deg_M(\mathcal{P})}\right)^{-1}\frac{L(u,\chi)}{\zeta_M(u^2)}du. 
\end{equation*}
When $\chi$ is nontrivial, we may shift the contour to the circle $\abs{u}=q^{-1/2}$ without encountering any poles, using the Riemann hypothesis for function fields. To estimate the shifted integral, we use the fact that $L(u,\chi)$ is a polynomial, with constant coefficient equal to $1$, and \eqref{zetaexpl}. Writing 
\begin{equation*}
    P_M(u) = \prod_{j=1}^{2g_M}\left(1-uq^{1/2}e^{i\theta_{j,M}}\right)
\end{equation*}
and (for nontrivial $\chi$)
\begin{equation*}
    L(u,\chi) = \prod_{j=1}^{2g_M-2}\left(1-uq^{1/2}e^{i\theta_{j,\chi}}\right),
\end{equation*}
we may estimate the shifted integrand as being
\begin{equation*}
\begin{split}
    \ll q^{(2r-\deg_M(\mathcal{B})+1)/2}&\prod_{\mathcal{P}\mid \mathcal{B}}\left(1-q^{-\deg_M(\mathcal{P})/2}\right)^{-1}\frac{\prod_{j=1}^{2g_M-2}2}{\prod_{j=1}^{2g_M}\left(1-q^{-1/2}\right)} \\&\ll q^{(2r-\deg_M(\mathcal{B}))/2+\epsilon \deg_M(\mathcal{B})}C^{2g_M},
\end{split}
\end{equation*}
where $C $ is an absolute constant not depending on $q$.

Next, we estimate the integral when $\chi$ is principal. We once again shift the contour to the circle $\abs{u}= q^{-1/2}$. The shifted integral can be bounded as above. However, we see from \eqref{zetaexpl} that there is a pole at $u=q^{-1}$. Specifically, we pick up the residue
\begin{equation*}
    q^{2r-\deg_M(\mathcal{B})}\prod_{\mathcal{P}\mid \mathcal{B}}\left(1+\frac{1}{\abs{\mathcal{P}}}\right)^{-1}\frac{(1-q^{-2})P_M(q^{-1})}{P_M(q^{-2})}.
\end{equation*}

Now, a straightforward computation, cf. \cite[Proposition 5.2.5]{Cohen} shows that
\begin{equation*}
    \abs{S(M)} = 2\abs{\Cl^0(M)/\Cl^0(M)^2}.
\end{equation*}
When $M$ is a PID, the right-hand side above evidently equals $2$. When $M$ is a quadratic extension $\fq(T)(\sqrt{F})$ of $\fq(T)$, the right-hand side equals $2^{1+r_2}$, with $r_2$ bounded from above by the number of prime factors of $F$. For a geometric proof of an exact expression for $r_2$, see \cite[Theorem 1.4]{Cornelissen}.

We have proven the following.
\begin{lemma}\label{quadLemma}
    Let $M$ be an extension of degree $\leq 2$ of $\fq(T)$. Then, there is an absolute constant $C$ such that
    \begin{equation*}
        \sum_\sumstack{[L:M]=2\\\abs{\mathrm{Disc}(L/M)}=Y\\ \mathcal{B}\mid \mathrm{Disc}(L/M)}1 = \frac{2Y}{\abs{\mathcal{B}}}\prod_{\mathcal{P}\mid \mathcal{B}}\left(1+\frac{1}{\abs{\mathcal{P}}}\right)^{-1}\frac{(1-q^{-2})P_M(q^{-1})}{P_M(q^{-2})}+ \mathcal{O}\left(q^{\epsilon (g_M+\deg_M(\mathcal{B}))}\frac{Y^{1/2}}{\abs{\mathcal{B}}^{1/2}}C^{2g_M}\right).
    \end{equation*}
\end{lemma}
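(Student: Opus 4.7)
The plan is to reduce the counting problem to a character sum via the parameterization of Lemma \ref{quadbij}, and then extract the relevant coefficient from a Hecke $L$-function by contour integration. First I would apply Lemma \ref{quadbij} to identify the quadratic extensions $L/M$ with discriminant $Y$ divisible by $\mathcal{B}$ as pairs $(\mathcal{A},\overline{u})$, where $\mathcal{A}$ is a squarefree effective divisor lying in the trivial class of $\Cl(M)/\Cl(M)^2$, with $\mathcal{B}\mid \mathcal{A}$ and $\abs{\mathcal{A}}=Y$. Since $Y$ is an even power of $q$, $\deg_M(\mathcal{A})$ is even, so being a square in $\Cl(M)$ is equivalent to being a square in $\Cl^0(M)$, a condition I would detect by orthogonality over $\widehat{\Cl^0(M)}[2]$. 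Combined with the standard computation $\abs{S(M)}=2\abs{\Cl^0(M)/\Cl^0(M)^2}$, this produces the expression \eqref{quadcountsum} with an overall factor of $2$.

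Next, for each $\chi$, the innermost sum over squarefree $\mathcal{A}$ coprime to $\mathcal{B}$ is the coefficient of $u^{2r-\deg_M(\mathcal{B})}$ of the generating function
\begin{equation*}
\prod_{\mathcal{P}\nmid \mathcal{B}}\bigl(1+\chi(\mathcal{P})u^{\deg_M(\mathcal{P})}\bigr) = \prod_{\mathcal{P}\mid \mathcal{B}}\bigl(1+\chi(\mathcal{P})u^{\deg_M(\mathcal{P})}\bigr)^{-1}\cdot \frac{L(u,\chi)}{\zeta_M(u^2)},
\end{equation*}
which follows from $\chi^2=1$. Applying Cauchy's formula on $\abs{u}=q^{-2}$ converts this coefficient into a contour integral, which I would shift out to $\abs{u}=q^{-1/2}$ in order to exploit the Riemann Hypothesis for function fields.

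For nontrivial $\chi$, the function $L(u,\chi)$ is a polynomial of degree $2g_M-2$, so no pole is encountered in the shift, and factoring $L(u,\chi)$ and $P_M(u^2)$ into linear factors with roots on $\abs{u}=q^{-1/2}$ leads to the error term $q^{(2r-\deg_M(\mathcal{B}))/2+\epsilon\deg_M(\mathcal{B})}C^{2g_M}$ on the shifted contour, with the $q^{\epsilon\deg_M(\mathcal{B})}$ loss absorbing the factors at primes dividing $\mathcal{B}$ via the trivial bound $(1+\abs{\mathcal{P}}^{-1/2})^{-1}\ll 1$. For the trivial character, $L(u,\chi_0)=\zeta_M(u)$ has a simple pole at $u=q^{-1}$; the shift therefore picks up exactly the residue
\begin{equation*}
\frac{q^{2r-\deg_M(\mathcal{B})}(1-q^{-2})P_M(q^{-1})}{P_M(q^{-2})}\prod_{\mathcal{P}\mid\mathcal{B}}\bigl(1+\abs{\mathcal{P}}^{-1}\bigr)^{-1},
\end{equation*}
which provides the main term once the factor of $2$ from Step 1 is included. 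The remaining shifted integral for $\chi_0$ is bounded as in the nontrivial case.

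The main obstacle is ensuring that all estimates on the shifted contour are uniform both in the genus $g_M$ and in the support of $\mathcal{B}$. Each root of $L(u,\chi)$ on $\abs{u}=q^{-1/2}$ contributes at most $2$ to the numerator and each root of $P_M(u^2)$ contributes at least $1-q^{-1/2}$ to the denominator; assembling these into a single constant $C$ independent of $q$ requires care but is routine. The dependence on $\mathcal{B}$ is more delicate, because although there are $\omega(\mathcal{B})\leq \deg_M(\mathcal{B})$ Euler factors to account for, each of them is $O(1)$, so the bound $(\text{const})^{\omega(\mathcal{B})}\ll q^{\epsilon \deg_M(\mathcal{B})}$ suffices. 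Finally, one must observe that the sum over characters $\chi\in\widehat{\Cl^0(M)}[2]$ contributes only a factor of $\abs{\Cl^0(M)/\Cl^0(M)^2}$, which cancels against the denominator in \eqref{quadcountsum}, leaving the claimed shape.
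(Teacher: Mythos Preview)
Your proposal is correct and follows essentially the same approach as the paper: parameterize via Lemma~\ref{quadbij}, detect the class-group condition by orthogonality over $\widehat{\Cl^0(M)}[2]$, identify the generating function as $L(u,\chi)/\zeta_M(u^2)$ times the local factors at $\mathcal{B}$, and extract the coefficient by shifting a Cauchy integral to $\abs{u}=q^{-1/2}$, picking up the residue at $u=q^{-1}$ for the trivial character. The only small imprecision is in your final sentence: the cancellation of $\abs{\Cl^0(M)/\Cl^0(M)^2}$ against the denominator leaves a factor $\abs{S(M)}$ (not $2$) in the error term, but since $\abs{S(M)}=2^{1+r_2}$ with $r_2$ bounded by the number of prime factors of $\Disc(M)$, this is $\ll 2^{O(g_M)}$ and is absorbed into $C^{2g_M}$, exactly as the paper does implicitly.
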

We remark that if one shifts the integral further away from the origin, then one obtains a smaller $Y$-dependence in the error term, at the cost of a factor involving a power of $\abs{\Disc(M)}$ and decreasing the exponent of $\abs{\mathcal{B}}$ in the denominator of the error term.

\subsection{Counting fields with \texorpdfstring{$\beta\leq 1/2$}{}}
When $\beta\leq 1/2$, we use Lemma \ref{quadonquadlemma} and study $\Tilde{F}_{\alpha,\beta}(X)$. Specifically, combining \eqref{tildeFormula} and Lemma \ref{quadLemma}, with $\mathcal{B} = 1$, shows that
\begin{equation*}
    \mathcal{F}_{\alpha,\beta}(X) = (1-q^{-2})\sum_\sumstack{j\geq 1\\ X^{\alpha}< q^{2j}\leq X^\beta}\sum_\sumstack{[K:\fq(T)]=2\\ \abs{\Disc(K)}=q^{2j} }\left(\frac{X}{\abs{\Disc(K)}}\frac{P_K(q^{-1})}{P_K(q^{-2})}+\mathcal{O}\left(\frac{X^{1/2+\epsilon}C^{2j}}{\abs{\Disc(K)}^{1/2}}\right)\right) + \mathcal{O}_\epsilon\left(X^{(1+\beta)/2+\epsilon}\right).
\end{equation*}
After summing the error term, we obtain
\begin{equation}\label{sumnoflip}
    (1-q^{-2})\sum_\sumstack{j\geq 1\\ X^{\alpha} < q^{2j}\leq X^\beta}\sum_\sumstack{[K:\fq(T)]=2\\ \abs{\Disc(K)}=q^{2j} }\left(\frac{X}{\abs{\Disc(K)}}\frac{P_K(q^{-1})}{P_K(q^{-2})}\right) + \mathcal{O}_\epsilon\left(X^{(1+\beta)/2+\epsilon}C^{n}\right).
\end{equation}
We postpone the evaluation of the main term, as we will obtain an asymptotic formula for the sum over $K$ as a special case of the computations in the forthcoming section.
\subsection{Counting fields with \texorpdfstring{$\beta > 1/2$}{}}
We now turn to the problem of estimating $\mathcal{F}_{\alpha,\beta}(X)$ when $\beta > 1/2$. Without loss of generality, we may assume that $\alpha \geq 1/2$. For this, we will adapt a method previously used over $\mathbb{Q}$ by \cite{Friedrichsen}.

Let $L$ be a $D_4$-field. Then, recall that there is a flipped field, say $L'$, associated with $L$. Moreover, this field contains a quadratic subfield $K'$. The conductor $C(L)$ equals $\abs{\Disc(K)\Disc(K')}$ up to a correction factor $J(L) = J(L')$, such that
\begin{equation*}
    C(L) = \abs{\Disc(K)\Disc(K')}J(L).
\end{equation*}
More precisely, $J(L)$ is the absolute value of the product $\Tilde{J}(L)$ of primes $\mathcal{P}$, which are unramified in $K$, but which, together with their Galois conjugate over $\fq(T)$, ramify in $L$. In particular, $J(L)$ is the squared absolute value of a product of primes in $\fq(T)$. See \cite[Section 3]{ASVW} for a more in-depth discussion.

We conclude that every pair $(L,K)$, with $L$ a $D_4$ field, is associated with a pair $(L',K')$, where
\begin{equation*}
    \abs{\Disc(K')} = \frac{X}{J(L')\abs{\Disc(K)}} < X^{1-\alpha} \leq X^{1/2}.
\end{equation*}
Hence, instead of counting pairs $(L,K)$ with $C(L)=X$ and $X^{\alpha} <\abs{\Disc(K)}\leq X^{\beta}$, we count pairs $(L',K')$, with $C(L')=X$ and $\abs{\Disc(K')} < X^{1-\alpha}$. However, we also have the additional condition that
\begin{equation*}
   X^{\beta } \geq \frac{X}{J(L')\abs{\Disc(K')}},\,\,\,\, \text{ and }\,\,\,\,  X^{\alpha } < \frac{X}{J(L')\abs{\Disc(K')}},
\end{equation*}
i.e.
\begin{equation*}
 \frac{X^{1-\beta}}{J(L')}\leq \abs{\Disc(K')} < \frac{X^{1-\alpha}}{J(L')} .
\end{equation*}
We conclude that
\begin{equation}\label{flipfeq}
\begin{split}
    \mathcal{F}_{\alpha,\beta}(X) &= \frac{1}{2}\sum_{f\in D_{\fq(T)}}\mu^2(f)\sum_\sumstack{j\geq 1\\  X^{1-\beta}/\abs{f}^2\leq q^{2j} < X^{1-\alpha}/\abs{f}^2}\sum_\sumstack{[K':\fq(T)]=2\\ \abs{\Disc(K')}=q^{2j}  \\ \text{$K'$ unramified at $f$}}\sum_\sumstack{[L':K']=2\\ \abs{\mathrm{Disc}(L'/K')}=X/q^{2j}\\ \Tilde{J}(L') = f^2}1 + \mathcal{O}\left(X^{1-\alpha/2+\epsilon}\right),
\end{split}
\end{equation}
where we also used Lemma \ref{quadonquadlemma}. For a general pair $(L',K')$, where $L'$ is not necessarily $D_4$, we define $\Tilde{J}(L') = \Tilde{J}(L',K')$ as the product over all primes in $K'$, which are unramified in $K'$, but which together with their Galois conjugates in $K'$ ramify in $L'$.

We continue studying the sum above, using inclusion-exclusion to handle the condition $\Tilde{J}(L') = f^2$. We can then rewrite the sum above as
\begin{equation}\label{flipfieldst}
     \frac{1}{2}\sum_{f\in D_{\fq(T)}}\mu^2(f)\sum_\sumstack{g\in D_{\fq(T)}\\ (g,f)=1}\mu(g)\sum_\sumstack{j\geq 1\\  X^{1-\beta}/\abs{f}^2\leq q^{2j} < X^{1-\alpha}/\abs{f}^2}\sum_\sumstack{[K':\fq(T)]=2\\ \abs{\Disc(K')}=q^{2j}  \\ \text{$K'$ unramified at $fg$}}\sum_\sumstack{[L:K']=2\\ \abs{\mathrm{Disc}(L'/K')}=X/q^{2j}\\ f^2g^2\mid N_{K'/\fq(T)}(\Disc(L'/K'))}1,
\end{equation}
where $N_{K'/\fq(T)}: D_{K'}\to D_{\fq(T)}$ is the norm map on divisors. We find an estimate for the innermost sum by applying Lemma \ref{quadramcount} with $\mathcal{B}$ being the product of all primes in $K'$ lying over $fg$. We then see that the above equals
\begin{equation}\label{Sumfieldflip}
\begin{split}
    X(1-q^{-2})\sum_{f\in D_{\fq(T)}}&\mu^2(f)\sum_\sumstack{g\in D_{\fq(T)}\\ (g,f)=1}\frac{\mu(g)}{\abs{fg}^2}\sum_\sumstack{j\geq 1\\  X^{1-\beta}/\abs{f}^2\leq q^{2j} < X^{1-\alpha}/\abs{f}^2}q^{-2j}\sum_\sumstack{[K':\fq(T)]=2\\ \abs{\Disc(K')}=q^{2j}  \\ \text{$K'$ unramified at $fg$}}\prod_{\mathcal{P}\mid fg}\left(1+\frac{1}{\abs{\mathcal{P}}}\right)^{-1}\frac{P_{K'}(q^{-1})}{P_{K'}(q^{-2})} \\&+ \mathcal{O}\left(X^{1-\alpha/2+\epsilon}C^{n}\right).
\end{split}
\end{equation}
\subsubsection{\texorpdfstring{Averaging over $K$}{}}\label{Kaversect}
We now evaluate the innermost sum in \eqref{Sumfieldflip}. Note that setting $f=g=1$ yields the innermost sum in \eqref{sumnoflip}. Let $\chi_{K'}$ denote the Kronecker character of $K'$. As $P_K(u)$ is a polynomial of degree $2g_{K'}$, we find using \eqref{Pkdef}, that 
\begin{equation*}
    P_{K'}(u) = \sum_{\ell= 0}^{2g_{K'}}u^{\ell}\sum_\sumstack{A\in D_{\fq(T)}\\ \deg(A)=\ell}\chi_{K'}(A).
\end{equation*}
Moreover, from the Euler product expansion, we have for $\abs{u} < q^{-1}$ that
\begin{equation*}
    \frac{1}{P_K(u)} = \prod_{P}\left(1-\chi_{K'}(P)u^{\deg(P)}\right),
\end{equation*}
so that
\begin{equation*}
    \prod_{\mathcal{P}\mid fg}\left(1+\frac{1}{\abs{\mathcal{P}}}\right)^{-1}P_{K'}(q^{-2})^{-1} = \prod_{P\nmid fg}\left(1-\frac{\chi_{K'}(P)}{\abs{P}^2}\right)\prod_{P\mid fg}\bigg(\left(1-\frac{\chi_{K'}(P)}{\abs{P}^2}\right)\prod_{\mathcal{P}\mid P}\left(1+\frac{1}{\abs{\mathcal{P}}}\right)^{-1}\bigg).
\end{equation*}
Splitting into cases depending on whether $P$ is inert or split in $K'$ shows that the above equals
\begin{equation*}
    \prod_{\mathcal{P}\mid fg}\left(1+\frac{1}{\abs{\mathcal{P}}}\right)^{-1}P_{K'}(q^{-2})^{-1} = \prod_{P\nmid fg}\left(1-\frac{\chi_{K'}(P)}{\abs{P}^2}\right)\prod_{P\mid fg}\left(1-\frac{\chi_{K'}(P)}{\abs{P}}\right)\prod_{P\mid fg}\left(1+\frac{1}{\abs{P}}\right)^{-1}.
\end{equation*}
Now, expanding into a Dirichlet series, we find that
\begin{equation}\label{sumtoavK}
    \prod_{\mathcal{P}\mid fg}\left(1+\frac{1}{\abs{\mathcal{P}}}\right)^{-1}\frac{P_{K'}(q^{-1})}{P_{K'}(q^{-2})} = \prod_{P\mid fg}\left(1+\frac{1}{\abs{P}}\right)^{-1}\sum_{\deg(A)\leq 2g_{K'}}\sum_{B\in D_{\fq(T)}}\frac{\chi_{K'}(AB)\mu(B)|(B,fg)|}{\abs{A}\abs{B}^2}.
\end{equation}
We wish to average this expression over $K'$ by interchanging the order of summation. For this, we need the following lemma.
\begin{lemma}\label{KavLemma}
    Let $C\in D_{\fq(T)}$. Then, if $C$ is a square, we have that
    \begin{equation*}
        \sum_\sumstack{[K':\fq(T)]=2\\ \abs{\Disc(K')}=q^{2j}  \\ \text{ $K'$ \textup{unramified at} $fg$ }}\chi_{K'}(C) = 2q^{2j}(1-q^{-2})\prod_{P\mid fgC}\left(1+\frac{1}{\abs{P}}\right)^{-1}+ \mathcal{O}_\epsilon\left(\abs{fgC}^{\epsilon}\right),
    \end{equation*}
    and if $C$ is a non-square, we have the bound
    \begin{equation*}
        \sum_\sumstack{[K':\fq(T)]=2\\ \abs{\Disc(K')}=q^{2j}  \\ \text{ $K'$ \textup{unramified at} $fg$ }}\chi_{K'}(C) \ll q^{j(1+\epsilon)}\abs{fgC}^{\epsilon}.
    \end{equation*}
\end{lemma}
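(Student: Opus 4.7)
The plan is to parametrize the sum over $K'$ explicitly and reduce the estimation to a quadratic Dirichlet character sum, which can then be controlled via the associated $L$-function together with the Riemann hypothesis for function fields.

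First I would note that every geometric quadratic extension $K'/\fq(T)$ has the form $\fq(T)(\sqrt{cD})$ with $D\in\fq[T]$ squarefree monic non-constant and $\bar{c}\in\fq^\times/(\fq^\times)^2$ uniquely determined by $K'$. Under this parametrization, $\abs{\Disc(K')}=q^{2j}$ is equivalent to $\deg D\in\{2j,2j-1\}$, and $K'$ being unramified at $fg$ is equivalent to $(D,fg)=1$. The Kronecker symbol satisfies $\chi_{K'_{cD}}(C)=(\tfrac{c}{C})\chi_{K'_D}(C)$, and for $c\in\fq^\times$ a non-square a direct computation gives $(\tfrac{c}{C})=(-1)^{\deg C}$. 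Summing over the two classes of $\bar{c}$ therefore contributes a factor $2$ when $\deg C$ is even and annihilates the sum when $\deg C$ is odd; in the non-square case with $\deg C$ odd the bound is thus trivial, so we may assume $\deg C$ even throughout.

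In the square case, write $C=C_1^2$; then $\chi_{K'_D}(C)=\mathbf{1}_{(D,C_1)=1}$, and the sum reduces to twice the count of squarefree monic $D$ of degree $2j$ or $2j-1$ with $(D,fgC_1)=1$. Using
\[
\sum_{\substack{D\text{ sqfree monic}\\(D,M)=1}}u^{\deg D}=\frac{1-qu^2}{1-qu}\prod_{P\mid M}(1+u^{\deg P})^{-1},
\]
extracting coefficients by Cauchy's formula, and shifting the contour past the simple pole at $u=1/q$ out to $\abs{u}=q^{-1/2}$ yields the main term $2q^{2j}(1-q^{-2})\prod_{P\mid fgC}(1+1/\abs{P})^{-1}$. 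The error of size $\mathcal{O}_\epsilon(\abs{fgC}^\epsilon)$ comes from the shifted integrand combined with the standard divisor bound on $\omega(fgC)$.

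In the non-square case, write $C=C_0C_1^2$ with $C_0\neq 1$ squarefree, and set $\psi:=(\tfrac{\cdot}{C_0})$, a non-trivial quadratic Dirichlet character modulo $C_0$ (absorbing a sign coming from quadratic reciprocity in $\fq[T]$). The relevant generating function is
\[
\sum_{\substack{D\text{ sqfree monic}\\(D,fgC_1)=1}}\psi(D)u^{\deg D}=\frac{L(u,\psi)}{L(u^2,\psi^2)}\prod_{P\mid fgC_0C_1}(1+\psi(P)u^{\deg P})^{-1},
\]
where $L(u,\psi)$ is a polynomial whose inverse roots lie on $\abs{u}=q^{-1/2}$ by RH and $1/L(u^2,\psi^2)$ is holomorphic in $\abs{u}<q^{-1/2}$. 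Shifting the contour out to $\abs{u}=q^{-1/2}$ (no poles are crossed) and estimating trivially gives $\ll q^{j}\abs{fgC}^\epsilon$; summing over the two allowed degrees and doubling for the two classes of $c$ preserves this bound and yields $\mathcal{O}(q^{j(1+\epsilon)}\abs{fgC}^\epsilon)$. The main technical obstacle will be the careful bookkeeping of the $c$-dependence, the reciprocity sign, and the local Euler factors $\prod_{P\mid M}(1+\psi(P)u^{\deg P})^{-1}$ on the shifted contour; each of these reduces to the divisor bound $\omega(M)\ll\log\abs{M}/\log\log\abs{M}$ and is absorbed into the $\abs{fgC}^\epsilon$ tolerance.
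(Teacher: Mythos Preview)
Your proposal is correct and follows essentially the same approach as the paper: parametrize the quadratic extensions, kill the odd-$\deg C$ case via the sum over $\fq^\times/(\fq^\times)^2$, treat the square case as a squarefree count via the generating function $\zeta/\zeta(u^2)$ and contour shift past $u=1/q$, and in the non-square case reduce to a non-trivial quadratic Dirichlet $L$-function bounded by the Riemann hypothesis on $|u|=q^{-1/2}$. The only cosmetic difference is that the paper works in the full divisor group of $\fq(T)$ (so $P_\infty$ is a prime like any other and all discriminants have degree exactly $2j$), whereas you work in $\fq[T]$ and split into polynomial degrees $2j$ and $2j-1$; this means that when $P_\infty\mid fgC$ you must add one line of bookkeeping (e.g.\ dropping the odd-degree range when $P_\infty\mid fg$, and accounting for $\chi_{K'}(P_\infty)$ when $P_\infty\mid C$), but nothing beyond what you already flagged as ``careful bookkeeping''.
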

\begin{proof}
    First, we use Lemma \ref{quadbij} to enumerate the quadratic $K'$ we are summing over. A computation shows that the Selmer group $S(\fq(T)) = \{1,\overline{\alpha_0}\}$, where $\alpha_0 \in \fq^{*}$ is a non-square. Moreover, the effective, squarefree divisors $D$ which are squares in the divisor class group are either divisors corresponding to squarefree polynomials of even degree, or they are $P_\infty$ multiplied with a squarefree polynomial of odd degree. This means that they are precisely the effective squarefree divisors of even degree. We conclude that
    \begin{equation}\label{charramsum}
        \sum_\sumstack{[K':\fq(T)]=2\\ \abs{\Disc(K')}=q^{2j}  \\ \text{ $K'$ \textup{unramified at} $fg$ }}\chi_{K'}(C)  = \sum_\sumstack{D\in D_{\fq(T)}\\(D,fgC)=1\\\deg(D) = 2j\\D \text{ squarefree}}\sum_{u\in \{1,\alpha_0\}}\chi_{D,u}(C).
    \end{equation}
    Here $(D,u)$ corresponds to the field $\fq(T)(\sqrt{uD'})$, where $D'=:D/(D,P_\infty)$ is interpreted as a monic element of $\fq[T]$.

    We claim that the innermost sum equals zero when $\deg(C)$ is odd. Indeed, it suffices to consider the case when $C$ is prime, and this case follows from the observation that $\alpha_0$ will be a non-square in $\mathbb{F}_{q^{\deg P}}$ as well. Therefore, we may assume that $\deg(C)$ is even, whence $\chi_{D,\alpha_0}(C)=\chi_{D,1}(C)=:\chi_{D}(C)$. We begin with the case when $C$ is not a square.

    Now, the splitting behaviour of primes over a quadratic field is governed by the quadratic residue symbol, see \cite[Chapter 3]{Rosen}. Specifically,
    \begin{equation*}
        \chi_{D}(C) = \left(\frac{D'}{C}\right).
    \end{equation*}
    If $P_\infty\mid C$, then $D' = D$, whence $\deg(D')$ is even. Hence, $P_\infty$ splits in the field $\fq(T)(\sqrt{D})$. Indeed, this follows by the Dedekind--Kummer theorem applied to the polynomial $X^2-1/D$ over $\fq[T^{-1}]$. If instead $P_\infty\nmid C$, then $C=C'$ so that in any case, the above equals
    \begin{equation*}
        \left(\frac{D'}{C'}\right) = \left(\frac{C'}{D'}\right) =\left(\frac{C}{D'}\right) = \left(\frac{C}{D}\right) = \chi_{C}(D),
    \end{equation*}
    where the second equality is quadratic reciprocity, using that at least one of $D'$ and $C'$ has even degree.

    As $D$ is coprime to $C$, $\chi_C(D) = \chi_{\mathrm{sqf}(C)}(D)$,
    where $\mathrm{sqf}(C)$ denotes the squarefree part of $C$, which is nontrivial as $C$ is not a square. Associated to this $\mathrm{sqf}(C)$, there is an associated quadratic field $K_C:=\fq(T)(\sqrt{\mathrm{sqf}(C)})$ and $\chi_{\mathrm{sqf}(C)}(D) = \chi_{K_C}(D)$. We are then interested in bounding the average
    \begin{equation*}
        \sum_\sumstack{D\in D_{\fq(T)}\\(D,fgC)=1\\\deg(D) = 2j}\chi_{K_C}(D).
    \end{equation*}
    This can be done by applying the methods of Section \ref{quadsect} to the $L$-function $P_{K_C}(u)$, and we find that the above sum is
    \begin{equation*}
        \ll_\epsilon q^{j(1+\epsilon)}\abs{fgC}^\epsilon.
    \end{equation*}

    We now turn to the case when $C$ is a square. Then, as long as $C$ is coprime to $D$, we have that $\chi_{D,u}(C) = 1$. Hence, in this case, \eqref{charramsum} equals
    \begin{equation*}
        2\sum_\sumstack{D\in D_{\fq(T)}\\(D,fgC)=1\\\deg(D) = 2j\\D \text{ squarefree}}1.
    \end{equation*}
    The sum is the $2j$th coefficient of
    \begin{equation*}
        \sum_\sumstack{D\in D_{\fq(T)}\\(D,fgC)=1\\D \text{ squarefree}}u^{\deg(D)} = \prod_{P\nmid fgC}\left(1+u^{\deg(P)}\right) = \frac{\zeta(u)}{\zeta(u^2)}\prod_{P\mid fgC}\left(1+u^{\deg(P)}\right)^{-1} = \frac{(1-qu^2)(1-u^2)}{(1-qu)(1-u)}\prod_{P\mid fgC}\left(1+u^{\deg(P)}\right)^{-1}.
    \end{equation*}
    Extracting this coefficient through contour integration by shifting the contour to the line $\abs{u}=1$, say, finishes the proof of the lemma.
\end{proof}

From the above lemma, we conclude that
\begin{equation*}
    \sum_\sumstack{[K':\fq(T)]=2\\ \abs{\Disc(K')}=q^{2j}  \\ \text{$K'$ unramified at $fg$}}\chi_{K'}(AB)-\mathbf{1}_{AB \text{ square}}\cdot2q^{2j}(1-q^{-2})\prod_{P\mid fgAB}\left(1+\frac{1}{\abs{P}}\right)^{-1} \ll_\epsilon q^{j(1+\epsilon)}\abs{fgAB}^{\epsilon}.
\end{equation*}
Summing the error term yields
\begin{equation*}
\begin{split}
    X(1-q^{-2})&\sum_{f\in D_{\fq(T)}}\mu^2(f)\sum_\sumstack{g\in D_{\fq(T)}\\ (g,f)=1}\frac{\mu(g)}{\abs{fg}^2}\prod_{P\mid fg}\left(1+\frac{1}{\abs{P}}\right)^{-1}\sum_\sumstack{j\geq 1\\  X^{1-\beta}/\abs{f}^2\leq q^{2j} < X^{1-\alpha}/\abs{f}^2}q^{-2j}
    \\& \times \sum_\sumstack{\deg(A)\leq 2j-2\\B\in D_{\fq(T)}}\frac{\mu(B)|(B,fg)|}{\abs{A}\abs{B}^2}\bigg(\sum_\sumstack{[K':\fq(T)]=2\\ \abs{\Disc(K')}=q^{2j}  \\ \text{$K'$ unramified at $fg$}}\chi_{K'}(AB)-\mathbf{1}_{AB \text{ square}}\cdot2q^{2j}(1-q^{-2})\prod_{P\mid fgAB}\left(1+\frac{1}{\abs{P}}\right)^{-1}\bigg).
\end{split}
\end{equation*}
If $\beta < 1$, the above is $\ll X^{(1+\beta)/2+\epsilon}$. Else, if $\beta=1$, then the lower bound in the summation over $j$ is $1$, and we may extend the upper bound to infinity, at the cost of an error term $\ll X^{(1+\alpha)/2+\epsilon}$.

We turn to the contribution of the main term from \eqref{sumtoavK} to \eqref{Sumfieldflip}. This contribution is
\begin{equation*}
\begin{split}
    2X(1-q^{-2})^2\sum_\sumstack{f\in D_{\fq(T)}\\ \abs{f} < X^{(1-\alpha)/2}}\mu^2(f)&\sum_\sumstack{g\in D_{\fq(T)}\\ (g,f)=1}\frac{\mu(g)}{\abs{fg}^2}\prod_{P\mid fg}\left(1+\frac{1}{\abs{P}}\right)^{-1}\sum_\sumstack{j\geq 1\\  X^{1-\beta}/\abs{f}^2\leq q^{2j} < X^{1-\alpha}/\abs{f}^2}1 \\&\times\sum_\sumstack{\deg(A)\leq 2j-2}\frac{\mu(\mathrm{sqf}(A))|(\mathrm{sqf}(A),fg)|}{\abs{A}\abs{\mathrm{sqf}(A)}^2}\prod_{P\mid fgA}\left(1+\frac{1}{\abs{P}}\right)^{-1}.
\end{split}
\end{equation*}
We now extend the summation over $A$ to infinity, at the cost of a tail sum
\begin{equation*}
\begin{split}
    -2X(1-q^{-2})^2\sum_\sumstack{f\in D_{\fq(T)}\\ \abs{f} < X^{(1-\alpha)/2}}\mu^2(f)&\sum_\sumstack{g\in D_{\fq(T)}\\ (g,f)=1}\frac{\mu(g)}{\abs{fg}^2}\prod_{P\mid fg}\left(1+\frac{1}{\abs{P}}\right)^{-1}\sum_\sumstack{j\geq 1\\  X^{1-\beta}/\abs{f}^2\leq q^{2j} < X^{1-\alpha}/\abs{f}^2}1 \\&\times\sum_\sumstack{\deg(A) > 2j-2}\frac{\mu(\mathrm{sqf}(A))|(\mathrm{sqf}(A),fg)|}{\abs{A}\abs{\mathrm{sqf}(A)}^2}\prod_{P\mid fgA}\left(1+\frac{1}{\abs{P}}\right)^{-1}.
\end{split}
\end{equation*}
The innermost sum is $\ll q^{-j}\abs{fg}^\epsilon$. Hence, we may extend the sum over $j$ to infinity at the cost of an error $\ll X^{(1+\alpha)/2 + \epsilon}$. We may extend the sum over $f$ to infinity with the same bound for the error term. If $\beta < 1$ the entire tail sum is in fact $\ll  X^{(1+\beta)/2 + \epsilon}$. When $\beta = 1$, the sum is $\ll X$.

We now consider the sum
\begin{equation*}
\begin{split}
    2X(1-q^{-2})^2\sum_\sumstack{f\in D_{\fq(T)}\\ \abs{f} < X^{(1-\alpha)/2}}\mu^2(f)&\sum_\sumstack{g\in D_{\fq(T)}\\ (g,f)=1}\frac{\mu(g)}{\abs{fg}^2}\prod_{P\mid fg}\left(1+\frac{1}{\abs{P}}\right)^{-1}\sum_\sumstack{j\geq 1\\  X^{1-\beta}/\abs{f}^2\leq q^{2j} < X^{1-\alpha}/\abs{f}^2}1 \\&\times\sum_\sumstack{A\in D_{\fq(T)}}\frac{\mu(\mathrm{sqf}(A))|(\mathrm{sqf}(A),fg)|}{\abs{A}\abs{\mathrm{sqf}(A)}^2}\prod_{P\mid fgA}\left(1+\frac{1}{\abs{P}}\right)^{-1}.
\end{split}
\end{equation*}
If $\beta < 1$, we restrict the summation over $f$ at $\abs{f} < X^{(1-\beta)/2}$ at the cost of an error $\ll X^{(1+\beta)/2+\epsilon}$. In this case, 
\begin{equation*}
    \sum_\sumstack{j\geq 1\\  X^{1-\beta}/\abs{f}^2\leq q^{2j} < X^{1-\alpha}/\abs{f}^2}1 = \lceil n(1-\alpha)\rceil-\lceil n(1-\beta)\rceil.
\end{equation*}
We replace the sum over $j$ with this expression, and then extend the sum over $f$ to infinity, at the cost of an error that can be absorbed in the existing error term. 

When instead $\beta = 1$, then
\begin{equation*}
    \sum_\sumstack{j\geq 1\\  q^0\leq q^{2j} < X^{1-\alpha}/\abs{f}^2}1 = \lceil n(1-\alpha)\rceil -\deg(f)-1.
\end{equation*}
In any case, the sum is $\sim n(\beta-\alpha)$.

Finally, we evaluate 
\begin{equation}\label{maintermsum}
\begin{split}
    \sum_\sumstack{f\in D_{\fq(T)}}\mu^2(f)&\sum_\sumstack{g\in D_{\fq(T)}\\ (g,f)=1}\frac{\mu(g)}{\abs{fg}^2}\prod_{P\mid fg}\left(1+\frac{1}{\abs{P}}\right)^{-1} \sum_\sumstack{A\in D_{\fq(T)}}\frac{\mu(\mathrm{sqf}(A))|(\mathrm{sqf}(A),fg)|}{\abs{A}\abs{\mathrm{sqf}(A)}^2}\prod_{P\mid fgA}\left(1+\frac{1}{\abs{P}}\right)^{-1}
    \\&=\sum_\sumstack{f\in D_{\fq(T)}}\mu^2(f)\sum_\sumstack{g\in D_{\fq(T)}\\ (g,f)=1}\frac{\mu(g)}{\abs{fg}^2}\prod_{P\mid fg}\left(1+\frac{1}{\abs{P}}\right)^{-2} \sum_\sumstack{A\in D_{\fq(T)}}\frac{\mu(\mathrm{sqf}(A))|(\mathrm{sqf}(A),fg)|}{\abs{A}\abs{\mathrm{sqf}(A)}^2}\prod_\sumstack{P\mid A\\ P\nmid fg}\left(1+\frac{1}{\abs{P}}\right)^{-1}.
\end{split}
\end{equation}
Now, by multiplicativity
\begin{equation*}
    \sum_\sumstack{A\in D_{\fq(T)}}\frac{\mu(\mathrm{sqf}(A))|(\mathrm{sqf}(A),fg)|}{\abs{A}\abs{\mathrm{sqf}(A)}^2}\prod_\sumstack{P\mid A\\ P\nmid fg}\left(1+\frac{1}{\abs{P}}\right)^{-1} = \prod_{P\nmid  fg}\left(1+\frac{1}{(\abs{P}+1)^2}\right).
\end{equation*}
Hence, \eqref{maintermsum} equals
\begin{equation*}
\begin{split}
    \prod_{P}\left(1+\frac{1}{(\abs{P}+1)^2}\right)&\sum_\sumstack{f\in D_{\fq(T)}}\mu^2(f)\prod_{P\mid f}\left(\frac{1}{1+(\abs{P}+1)^2}\right)\sum_\sumstack{g\in D_{\fq(T)}\\ (g,f)=1}\mu(g)\prod_{P\mid g}\left(\frac{1}{1+(\abs{P}+1)^2}\right)
    \\&=\prod_{P}\left(1+\frac{1}{(\abs{P}+1)^2}\right)\sum_\sumstack{f\in D_{\fq(T)}}\mu^2(f)\prod_{P\mid f}\left(\frac{1}{1+(\abs{P}+1)^2}\right)\prod_{P\nmid f}\left(\frac{(\abs{P}+1)^2}{1+(\abs{P}+1)^2}\right)
    \\&=\sum_\sumstack{f\in D_{\fq(T)}}\mu^2(f)\prod_{P\mid f}\frac{1}{(\abs{P}+1)^2} = \prod_{P}\left(1+\frac{1}{(\abs{P}+1)^2}\right).
\end{split}
\end{equation*}

By considering the case $f=g=1$ in the calculations above, one obtains a result for counting fields with $\beta \leq 1/2$. Here, one should also replace all instances of $1-\alpha$ with $\beta$ and all instances of $1-\beta$ with $\alpha$. Moreover, the upper bound in the summation over $j$ should be non-strict. This proves Proposition \ref{reffieldct}.

Finally, one obtains an asymptotic for all $D_4$-fields by adding together the results for $(\alpha,\beta)$ equal to $(0,1/2)$ and $(1/2,1)$. We obtain the following theorem.
\begin{theorem}\label{complicatedfieldctthm}
    Let $X=q^{2n}$. We have that
    \begin{equation}\label{fieldctthmeq}
    \begin{split}
        \#\mathcal{F}(X) = 2X&(n-1)(1-q^{-2})^2\prod_{P}\left(1+\frac{1}{(\abs{P}+1)^2}\right) \\&+X(1-q^{-2})\sum_\sumstack{0\leq f,g\in D_{\fq(T)}\\ (g,f)=1}\frac{\mu^2(f)\mu(g)}{\abs{fg}^2}\prod_{P\mid fg}\left(1+\frac{1}{\abs{P}}\right)^{-1}2^{\mathbf{1}_{fg=1}}\big(T_1(fg)+T_2(fg)+T_3(f,g)\big) + \mathcal{O}\left(X^{3/4+\epsilon}\right),
    \end{split}
    \end{equation}
    where
\begin{equation*}
\begin{split}
    T_1(fg)=& \sum_\sumstack{j\geq 1}q^{-2j}\sum_\sumstack{\deg(A)\leq 2j-2\\ B\in D_{\fq(T)}}\frac{\mu(B)|(B,fg)|}{\abs{A}\abs{B}^2}\bigg(\sum_\sumstack{[K':\fq(T)]=2\\ \abs{\Disc(K')}=q^{2j}   \\ \text{$K'$ unramified at $fg$}}\chi_{K'}(AB)-\mathbf{1}_{AB \text{ square}}\cdot2q^{2j}(1-q^{-2})\prod_{P\mid fgAB}\left(1+\frac{1}{\abs{P}}\right)^{-1}\bigg),
\end{split}
\end{equation*}
\begin{equation*}
\begin{split}
    T_2(fg)=-2(1-q^{-2}) \prod_{P\mid fg}\left(1+\frac{1}{\abs{P}}\right)^{-1}\cdot \sum_\sumstack{j\geq 1} \sum_\sumstack{\deg(A) > 2j-2}\frac{\mu(\mathrm{sqf}(A))|(\mathrm{sqf}(A),fg)|}{\abs{A}\abs{\mathrm{sqf}(A)}^2}\prod_{P\mid fgA}\left(1+\frac{1}{\abs{P}}\right)^{-1},
\end{split}
\end{equation*}
and 
\begin{equation*}
    T_3(f,g)=-2(1-q^{-2})\deg(f) \prod_{P\mid fg}\left(1+\frac{1}{\abs{P}}\right)^{-1} \sum_\sumstack{0\leq A\in D_{\fq(T)}}\frac{\mu(\mathrm{sqf}(A))|(\mathrm{sqf}(A),fg)|}{\abs{A}\abs{\mathrm{sqf}(A)}^2}\prod_{P\mid fgA}\left(1+\frac{1}{\abs{P}}\right)^{-1}.
\end{equation*}
\end{theorem}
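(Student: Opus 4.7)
The plan is to combine the two counting regimes established in the preceding subsections by decomposing
\[
\#\mathcal{F}(X) = \#\mathcal{F}_{0,1/2}(X) + \#\mathcal{F}_{1/2,1}(X),
\]
and matching each piece against the asymptotic analysis already carried out for general $(\alpha,\beta)$. For $\#\mathcal{F}_{0,1/2}(X)$ (the regime where no flipping is required), first apply Lemma \ref{quadonquadlemma} with $\beta = 1/2$ to pass to $\#\tilde{\mathcal{F}}_{0,1/2}(X)$ at the cost of $O(X^{3/4+\epsilon})$, substitute \eqref{tildeFormula}, and use Lemma \ref{quadLemma} with $\mathcal{B} = 1$ on the innermost sum. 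For $\#\mathcal{F}_{1/2,1}(X)$, run the flipped-field argument of Section \ref{countfields}: after Lemma \ref{quadonquadlemma}, use the correspondence $(L,K) \leftrightarrow (L',K')$ to move the large subfield to the small side, handle $\tilde{J}(L') = f^2$ by inclusion--exclusion as in \eqref{flipfieldst}, and invoke Lemma \ref{quadLemma} with $\mathcal{B}$ taken as the product of primes in $K'$ lying over $fg$, arriving at \eqref{Sumfieldflip}.

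Next, in both halves, expand $P_{K'}(q^{-1})/P_{K'}(q^{-2})$ as the double Dirichlet series \eqref{sumtoavK} and average over $K'$ via Lemma \ref{KavLemma}. The square contributions $AB = \square$ generate the principal mass; the non-square contributions accumulate, after $A,B$ summation, into the explicit secondary term $T_1(fg)$. Extending the $A$-range to infinity separates off a convergent inner sum and leaves $T_2(fg)$ as the tail. Extending the $f$-sum to infinity costs only $O(X^{(1+\alpha)/2+\epsilon}) = O(X^{3/4+\epsilon})$ in the $(1/2,1)$ regime, which is acceptable.

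Next, evaluate the $j$-sum: it equals $\lfloor n/2 \rfloor$ for $(\alpha,\beta) = (0,1/2)$ (with $f = g = 1$, as flipping is absent) and $\lceil n/2 \rceil - \deg(f) - 1$ for $(\alpha,\beta) = (1/2,1)$. The constant parts of these two counts, combined with the Euler-product identity \eqref{maintermsum} which collapses the remaining $f,g,A$ sums to $\prod_P\!\bigl(1 + (\abs{P}+1)^{-2}\bigr)$, produce the leading term $2X(n-1)(1-q^{-2})^2 \prod_P\!\bigl(1 + (\abs{P}+1)^{-2}\bigr)$; the $\deg(f)$-dependent piece of the $(1/2,1)$ count supplies the $T_3(f,g)$ contribution. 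The factor $2^{\mathbf{1}_{fg=1}}$ records that only $f = g = 1$ receives contributions from both halves, whereas every other $(f,g)$ pair arises purely from the flipped-field analysis.

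The main obstacle is bookkeeping rather than ideas: one must certify that no term is double-counted between the two halves (justifying $2^{\mathbf{1}_{fg=1}}$), decide which remainders survive as genuine secondary terms versus being absorbed into the error, and confirm uniformity in $f,g$ of the remainder in Lemma \ref{KavLemma} so that after summation over $f,g,j$ the total error remains $O(X^{3/4+\epsilon})$. This last point is what forces $q$ to exceed an absolute constant depending on $\epsilon$, in order to absorb the $C^{2g_M}$ growth in the error of Lemma \ref{quadLemma}; provided this is arranged, all accumulated errors from Lemma \ref{quadonquadlemma}, the contour shift in Lemma \ref{quadLemma}, extending the $f$-range, and the $K'$-average fit beneath the claimed threshold.
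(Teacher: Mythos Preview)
Your proposal is correct and follows essentially the same route as the paper: split $\mathcal{F}(X)=\mathcal{F}_{0,1/2}(X)+\mathcal{F}_{1/2,1}(X)$, treat the small-subfield half directly via Lemmas~\ref{quadonquadlemma} and~\ref{quadLemma}, treat the large-subfield half by flipping and then running the same machinery through \eqref{flipfieldst}--\eqref{Sumfieldflip} and the $K'$-average of Section~\ref{Kaversect}, and read off $T_1,T_2,T_3$ as the three residual pieces (non-square/error from Lemma~\ref{KavLemma}, $A$-tail, and the $\deg(f)$ term from the $j$-count). One small correction of phrasing: in the $(1/2,1)$ half you must flip \emph{before} invoking Lemma~\ref{quadonquadlemma}, not after, since the correspondence $(L,K)\leftrightarrow(L',K')$ is only defined on genuine $D_4$-fields and since applying Lemma~\ref{quadonquadlemma} with $\beta=1$ on the unflipped side would give an unacceptable $O(X^{1+\epsilon})$; the paper does this in \eqref{flipfeq}, obtaining the error $O(X^{1-\alpha/2+\epsilon})=O(X^{3/4+\epsilon})$ you need.
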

We note that as $(1-q^{-2})(1-q^{-1}) =1/\zeta(2)$, the main term can be rewritten as
\begin{equation}\label{altEulerprod}
    2X(n-1)(1-q^{-1})^{-2}\prod_{P}\left(1-\frac{1}{\abs{P}^2}-\frac{2}{\abs{P}^3}+\frac{2}{\abs{P}^4}\right),
\end{equation}
cf. \cite[Theorem 1.1]{ASVW}.

\section{The one-level density: fields with small subfields}\label{smallsubfldch}
We now consider the one-level density of the subfamily
\begin{equation*}
    \mathcal{F}_{\alpha,\beta}(X)  = \{(L,K): [L:\fq(T)]=4, \,[L:K] = 2, \, X^{\alpha} <\abs{\Disc(K)}\leq X^\beta, L \text{ is a $D_4$-field} \},
\end{equation*}
with $0 <\alpha<\beta < 1/2$. In fact, we fix an $\eta_0 >0$ and require that $\eta_0 <\alpha<\beta <1/2-\eta_0$. We are interested in the $L$-functions 
\begin{equation}\label{PLKdef}
    \zeta_L(u)/\zeta_K(u) = P_{L/K}(u) = \prod_{\mathcal{P}}\left(1-\chi_{L/K}(\mathcal{P})u^{\deg_K(\mathcal{P})}\right)^{-1}.
\end{equation}
Here, $\mathcal{P}$ ranges over the primes in $K$, and $\chi_{L/K}(u)$ is the Kronecker character of $L$ over $K$. We remark that by the Riemann-Hurwitz theorem, $P_{L/K}$ is a polynomial of degree $2g_L-2g_K = 2n-4 =: N$. Moreover, using the Riemann Hypothesis, we see that the zeros have the form $q^{-1/2}e^{i\theta_j}$, where the $\theta_j$ lie in $N$ (not necessarily distinct) congruence classes in $\mathbb{R}/2\pi\mathbb{Z}$. 

We now fix a Schwartz function $\psi$ whose Fourier transform has support contained in $(-\sigma,\sigma)$. We then define
\begin{equation}\label{dlkdef}
    D_{L/K}(\psi) = \sum_{\theta_{L/K}}\psi\left(N\frac{\theta_{L/K}}{2\pi}\right),
\end{equation}
where the summation is over $\theta_{L/K}$ satisfying $P_{L/K}(q^{1/2}e^{i\theta_{L/K}})=0$, with multiplicity. The one-level density is then
\begin{equation*}
    \frac{1}{\#\mathcal{F}_{\alpha,\beta}(X)}\sum_{(L,K)\in\mathcal{F}_{\alpha,\beta}(X)}D_{L/K}(\psi).
\end{equation*}
We will be interested in the limit as $X\to \infty$ of the above expression, for fixed $\alpha,\beta$ and $q$. From the definition, and the fact that $\psi$ is Schwartz, we can immediately see that $D_{L/K}(\psi) \ll N$. Recall also that by Lemma \ref{quadonquadlemma}, the set $\tilde{\mathcal{F}}_{\alpha,\beta}(X)\setminus \mathcal{F}_{\alpha,\beta}(X)$ contains $\ll X^{1-\eta_0/2}$ elements, say. Hence,
\begin{equation*}
    \lim_{X\to \infty}\frac{1}{\#\mathcal{F}_{\alpha,\beta}(X)}\sum_{(L,K)\in\mathcal{F}_{\alpha,\beta}(X)}D_{L/K}(\psi) = \lim_{X\to\infty }\frac{1}{\#\tilde{\mathcal{F}}_{\alpha,\beta}(X)}\sum_{(L,K)\in\tilde{\mathcal{F}}_{\alpha,\beta}(X)}D_{L/K}(\psi).
\end{equation*}
We remark that we may count all fields, and not only count up to isomorphism. Analogous to $\tilde{\mathcal{F}}_{\alpha,\beta}(X)$, we use the notation $\tilde{\mathcal{F}}'_{\alpha,\beta}(X)$ for the collection of such fields where the isomorphism condition is dropped.

Next, we use a so-called explicit formula to rewrite the expression defining $D_{L/K}(\psi)$. First, if $\theta_1,...,\theta_{N}$ are the unique numbers with multiplicity, in $[0,2\pi)$, satisfying $P_{L/K}(q^{-1/2}e^{i\theta_j}) = 0$, then
\begin{equation*}
    D_{L/K}(\psi) = \sum_{j=1}^{N}\sum_{m\in \mathbb{Z}}\psi\left(N\frac{\theta_j+2\pi m}{2\pi}\right) = \frac{1}{N}\sum_{m\in \mathbb{Z}}\widehat{\psi}\left(\frac{m}{N}\right)\left(\sum_{j=1}^{N}e^{i\theta_j m}\right),
\end{equation*}
where the last equality is Poisson summation. This motivates us to define
\begin{equation}\label{cdef}
    c_{m}^{L/K} = \sum_{j=1}^{N}e^{i\theta_j m}.
\end{equation}
We remark that we have suppressed the dependence on $L$ and $K$ of the $\theta_j$. By the functional equation, we have $c_{m}^{L/K} = c_{-m}^{L/K}$, hence we may write
\begin{equation*}
    D_{L/K}(\psi)=\widehat{\psi}(0)+\frac{2}{N}\sum_{m=1}^\infty\widehat{\psi}\left(\frac{m}{N}\right)\left(\sum_{j=1}^{N}e^{i\theta_j m}\right).
\end{equation*}

We now find an alternative description of $c^{L/K}_m$ in terms of ramification data. First, as $P_{L/K}(u)$ is a polynomial of degree $N$, with constant coefficient $1$, we may factor it as
\begin{equation*}
    P_{L/K}(u) = \prod_{j=1}^N\left(1-q^{1/2}e^{i\theta_j}u\right).
\end{equation*}
Logarithmically differentiating this expression shows that
\begin{equation*}
    \frac{P'_{L/K}}{P_{L/K}}(u) = -\frac{1}{u}\sum_{m=1}^\infty q^{m/2}c^{L/K}_{m}  u^m.
\end{equation*}
On the other hand, by logarithmically differentiating the product in \eqref{PLKdef}, we observe that
\begin{equation*}
    \frac{P'_{L/K}}{P_{L/K}}(u) = \frac{1}{u}\sum_{\mathcal{P}}\sum_{r=1}^\infty \deg_K(\mathcal{P})\chi_{L/K}(\mathcal{P}^r)u^{r\deg_K(\mathcal{P})}.
\end{equation*}
Comparing coefficients, we conclude that
\begin{equation}\label{cequality}
    c^{L/K}_m = -q^{-m/2}\sum_{\deg_K\mathcal{P}\mid m}\deg_K(\mathcal{P})\chi_{L/K}(\mathcal{P}^{m/\deg_K(\mathcal{P})}).
\end{equation}

In conclusion, we have that
\begin{equation*}
\begin{split}
    \frac{1}{\#\tilde{\mathcal{F}}'_{\alpha,\beta}(X)}&\sum_{(L,K)\in\tilde{\mathcal{F}}'_{\alpha,\beta}(X)}D_{L/K}(\psi)  \\&=\widehat{\psi}(0)-\frac{2}{N\#\tilde{\mathcal{F}}'_{\alpha,\beta}(X)}\sum_{m=1}^\infty q^{-m/2}\widehat{\psi}\left(\frac{m}{N}\right)\sum_{(L,K)\in\tilde{\mathcal{F}}'_{\alpha,\beta}(X)}\sum_{\deg_K\mathcal{P}\mid m}\deg_K(\mathcal{P})\chi_{L/K}(\mathcal{P}^{m/\deg_K(\mathcal{P})}).
\end{split}
\end{equation*}
Now, from the Riemann Hypothesis over function fields, we obtain a prime 
number theorem \cite[Theorem 5.12]{Rosen}
\begin{equation}\label{primeidealct}
    \sum_{\deg_K(\mathcal{P})=m}1 = \frac{q^{m}}{m}+\mathcal{O}\left(g_Kq^{m/2}\right),
\end{equation}
where the error term can be improved to $\mathcal{O}\left(g_Kq^{m/3}\right)$ if $m$ is odd. Moreover, as a prime in $K$ of degree $m$ comes from a prime in $\fq(T)$ of degree equal to either $m$ or $m/2$, we have the uniform upper bound
\begin{equation}\label{primeunifbd}
    \sum_{\deg_K(\mathcal{P})=m}1 \ll \frac{q^m}{m},
\end{equation}
with no $K$-dependence. The bound \eqref{primeunifbd} already shows that
\begin{equation*}
    \frac{2}{N\#\tilde{\mathcal{F}}'_{\alpha,\beta}(X)}\sum_{m=1}^\infty q^{-m/2}\widehat{\psi}\left(\frac{m}{N}\right)\sum_{(L,K)\in\tilde{\mathcal{F}}'_{\alpha,\beta}(X)}\sum_\sumstack{\deg_K\mathcal{P}\mid m\\ \deg_K(\mathcal{P})\leq m/3}\deg_K(\mathcal{P})\chi_{L/K}(\mathcal{P}^{m/\deg_K(\mathcal{P})})\ll_\epsilon \frac{1}{N}\sum_{m=1}^\infty q^{m(-1/6+\epsilon)} \to 0,
\end{equation*}
as $X\to \infty$. Hence, we may restrict our attention to the two cases $\deg_K(\mathcal{P}) = m$ and $\deg_K(\mathcal{P}) = m/2$.

We first handle the case when $\deg_K(\mathcal{P}) = m/2$. Then, we note that $\chi_{L/K}(\mathcal{P}^{m/\deg_K(\mathcal{P})})=\chi_{L/K}(\mathcal{P}^{2}) = 1$, unless $\mathcal{P}$ ramified in $L$, in which case $\chi_{L/K}(\mathcal{P}^{2})$ equals zero. Now, we may bound the number of $\mathcal{P}$ which are ramified over $L$. Indeed, the primes which ramify are precisely those dividing the relative discriminant of $L/K$. The relative discriminant is of degree $\ll N$, whence the number of such primes is $o(N)$.

Using that $\chi_{L/K}(\mathcal{P}^{2}) = 1$ and adding back the ramified primes to the summation shows that, up to a term of size $o(1)$, the contribution from the primes with degree equal to half of $m$ is
\begin{equation*}
    -\frac{2}{N\#\tilde{\mathcal{F}}'_{\alpha,\beta}(X)}\sum_{m'=1}^\infty q^{-m'}\widehat{\psi}\left(\frac{2m'}{N}\right)\sum_{(L,K)\in\tilde{\mathcal{F}}'_{\alpha,\beta}(X)}\sum_{\deg_K\mathcal{P}= m'}\deg_K(\mathcal{P})\sim -\frac{2}{N}\sum_{m'=1}^\infty \widehat{\psi}\left(\frac{2m'}{N}\right),
\end{equation*}
where the asymptotic equality follows from \eqref{primeidealct} (and \eqref{primeunifbd} for small $m$). Applying Poisson summation once again shows that
\begin{equation*}
    -\frac{2}{N}\sum_{m'=1}^\infty \widehat{\psi}\left(\frac{2m'}{N}\right) \sim -\frac{\psi(0)}{2}.
\end{equation*}
We have arrived at
\begin{equation}\label{onelev}
\begin{split}
    \frac{1}{\#\tilde{\mathcal{F}}'_{\alpha,\beta}(X)}&\sum_{(L,K)\in\tilde{\mathcal{F}}'_{\alpha,\beta}(X)}D_{L/K}(\psi)  =\widehat{\psi}(0)-\frac{\psi(0)}{2}\\& - \frac{2}{N\#\tilde{\mathcal{F}}'_{\alpha,\beta}(X)}\sum_{m=1}^\infty q^{-m/2}m\widehat{\psi}\left(\frac{m}{N}\right)\sum_{(L,K)\in\tilde{\mathcal{F}}'_{\alpha,\beta}(X)}\sum_{\deg_K\mathcal{P}= m}\chi_{L/K}(\mathcal{P})+o(1).
\end{split}
\end{equation}

\subsection{Class groups and Hecke characters}\label{classgpsect}
Our goal is now to rewrite the sum in \eqref{onelev} in a way which makes the cancellation within the two innermost sums above clear. To begin, we expand
\begin{equation}\label{Heckestart}
    \sum_{(L,K)\in\tilde{\mathcal{F}}'_{\alpha,\beta}(X)}\sum_{\deg_K\mathcal{P}= m}\chi_{L/K}(\mathcal{P}) = \sum_\sumstack{j\geq 1\\ X^{\alpha}<q^{2j}\leq X^{\beta}} \sum_\sumstack{[K:\fq(T)]=2\\\abs{\Disc(K)}=q^{2j}}\sum_{\deg_K\mathcal{P}= m}\sum_\sumstack{[L:K]=2\\ \abs{\Disc(L/K)}=X/q^{2j}}\chi_{L/K}(\mathcal{P}).
\end{equation}
Now, from the parametrisation of quadratic extensions from Section \ref{countfields}, we know that the fields $L$ are in bijection with representatives $u$ of the elements in the Selmer group $S(K)$ and effective squarefree divisors $\mathcal{A}$ of even degree, with $\mathcal{A}$ being a square in the divisor class group. Moreover, $\abs{\mathcal{A}}= \abs{\Disc(L/K)}$. 

Writing $L(\mathcal{A},u)$ for the field corresponding to $\mathcal{A}$ and $u$ the innermost sum above equals
\begin{equation*}
    \sum_\sumstack{\mathcal{A} \text{ squarefree} \\ \mathcal{A}\in \mathrm{Cl}(K)^2 \\ \deg_K(\mathcal{A}) = 2n-2j }\sum_{u\in S(K)}\chi_{L(\mathcal{A},u)/K}(\mathcal{P}).
\end{equation*}
Now, we claim that if the degree of $\mathcal{P}$ is odd, then the innermost sum over $u$ is zero. Indeed, recall that $S(K)$ is a $2$-group and one of its components is $\fq^*/(\fq^*)^2$. Now, if $\mathcal{P}$ is odd, then a non-square in $\fq^*$ remains a non-square modulo $\mathcal{P}$. Hence, the innermost sum contains an equal amount of fields $L$ where $\mathcal{P}$ splits and where $\mathcal{P}$ is inert, whence it equals zero. Therefore, we may restrict the summation over $m$ to even $m$.

We now introduce some notation similar to \cite[Section 3]{CDyDO}. First, we let $S_\mathcal{P}(K)$ denote the subgroup of $S(K)$ consisting of classes $\overline{u}$ such that for some (all) lifts $u$ of $\overline{u}$, the polynomial $x^2-u$ splits over $\mathcal{O}_\mathcal{P}$. We let $\mathrm{Cl}_\mathcal{P}$ denote the ray class group modulo $\mathcal{P}$, i.e. the quotient of the group of divisors coprime to $\mathcal{P}$, with the set of principal divisors coprime to $\mathcal{P}$. 

We have the following lemma, connecting the splitting type of $\mathcal{P}$ in $L(\mathcal{A},u)$, with algebraic properties of $\mathcal{A}$ in the ray class group. The statement and proof are completely analogous to \cite[Lemma 3.5]{CDyDO}.
\begin{lemma}\label{splitlemma}
    Let $\mathcal{A}$ be an effective divisor which is a square in the divisor class group, such that there is some $\mathcal{Q}\in D_{K}$ with $\mathcal{A}\mathcal{Q}^2 = (\alpha)$. Let $\mathcal{M}$ be a squarefree divisor. Then the following are equivalent:

    \begin{enumerate}
        \item There exists an element $\overline{u}$ in the Selmer group, such that for any lift $u$ of $\overline{u}$, coprime to $\mathcal{P}$, the equation $x^2 \equiv \alpha u \nsmod{\mathcal{M}}$ has a solution.
        \item $\mathcal{A}$ is a square in $\mathrm{Cl}_\mathcal{M}(K)$.
    \end{enumerate}
\end{lemma}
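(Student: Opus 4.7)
The plan is to prove the two implications separately, with both leaning on the identity $(\alpha) = \mathcal{A}\mathcal{Q}^2$ to move between divisor-theoretic and element-theoretic statements. Throughout, I may assume $\mathcal{A}$ is coprime to $\mathcal{M}$ (the relevant case for the application, where $\mathcal{P}$ must be unramified in $L(\mathcal{A},u)$ for the character $\chi_{L/K}(\mathcal{P})$ to be nonzero), and by multiplying $\mathcal{Q}$ by a suitable principal divisor and $\alpha$ by the corresponding square I may also take $\mathcal{Q}$ coprime to $\mathcal{M}$.

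For $(2)\Rightarrow(1)$, the hypothesis gives $\mathcal{A} = (f)\mathcal{D}^2$ for some $f \in K^*$ with $f \equiv 1 \pmod{\mathcal{M}}$ and some divisor $\mathcal{D}$ coprime to $\mathcal{M}$. Substituting into $(\alpha) = \mathcal{A}\mathcal{Q}^2$ yields $(\alpha/f) = (\mathcal{D}\mathcal{Q})^2$, so $\alpha/f \in V(K)$. I set $u := f/\alpha$; then $u \in V(K)$, is coprime to $\mathcal{M}$ by the coprimality of $f$, $\mathcal{A}$, and $\mathcal{Q}$, and satisfies $\alpha u = f \equiv 1 = 1^2 \pmod{\mathcal{M}}$, so $x = 1$ solves the congruence. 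Any other lift of $\overline{u}$ coprime to $\mathcal{M}$ differs from $u$ by a square $w^2$ with $w$ coprime to $\mathcal{M}$, and then $\alpha u w^2 \equiv w^2 \pmod{\mathcal{M}}$, so the congruence remains solvable.

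For $(1)\Rightarrow(2)$, I fix a lift $u$ of $\overline{u}$ coprime to $\mathcal{M}$, together with $x \in K^*$ (necessarily coprime to $\mathcal{M}$) such that $\alpha u \equiv x^2 \pmod{\mathcal{M}}$. Setting $g := \alpha u / x^2 \in K^*$, one has $g \equiv 1 \pmod{\mathcal{M}}$. Writing $(u) = \mathcal{B}^2$ (possible since $u \in V(K)$) and expanding divisors gives $(g) = \mathcal{A} \cdot (\mathcal{Q}\mathcal{B}(x)^{-1})^2$, whence $\mathcal{A} = (g) \cdot (\mathcal{Q}\mathcal{B}(x)^{-1})^{-2}$. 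Since $g \equiv 1 \pmod{\mathcal{M}}$ and all of $\mathcal{Q}$, $\mathcal{B}$, $(x)$ are coprime to $\mathcal{M}$, this exhibits $\mathcal{A}$ as a square in $\mathrm{Cl}_\mathcal{M}(K)$.

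The main technical obstacle is the bookkeeping of coprimality conditions: one must ensure that the representatives $f$, $\mathcal{D}$, $\mathcal{Q}$, $\mathcal{B}$, and $(x)$ can all be chosen coprime to $\mathcal{M}$. This is handled by standard uses of weak approximation to shift representatives within their classes, together with multiplication by squares in $K^*$, which preserve both the Selmer-class of $\overline{u}$ and the relevant congruence. The argument is otherwise formal, essentially a direct function-field adaptation of the number-field proof in \cite[Lemma 3.5]{CDyDO}.
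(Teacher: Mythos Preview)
Your proof is correct and is precisely the function-field adaptation of \cite[Lemma 3.5]{CDyDO} that the paper invokes; the paper's own proof consists solely of that reference, so your approach coincides with it.
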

Applying the lemma with $\mathcal{M}=\mathcal{P}$, we see that if any of the two conditions above holds, then there are exactly $\abs{\mathcal{S}_\mathcal{P}(K)}$ such $\overline{u}$. The size of this group can be discerned using the following lemma, see \cite[Lemma 3.7]{CDyDO}.
\begin{lemma}\label{seqlemma}
    There exists a natural exact sequence 
    \begin{equation*}
        1 \to S_\mathcal{P}(K)\to S(K) \to \frac{\mathcal{O}_\mathcal{P}^*}{(\mathcal{O}_\mathcal{P}^*)^2} \to \frac{\mathrm{Cl}^0_\mathcal{P}(K)}{(\mathrm{Cl}^0_\mathcal{P}(K))^2} \to \frac{\mathrm{Cl}^0(K)}{(\mathrm{Cl}^0(K))^2} \to 1.
    \end{equation*}
\end{lemma}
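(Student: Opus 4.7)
The plan is to derive the sequence by combining the standard ray-class-field exact sequence with the multiplication-by-2 snake lemma, and then matching the snake map with the natural local Kummer map.

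First I would establish the classical exact sequence
\begin{equation*}
    1 \to (\mathcal{O}_K/\mathcal{P})^*/\fq^* \to \mathrm{Cl}^0_\mathcal{P}(K) \to \mathrm{Cl}^0(K) \to 1.
\end{equation*}
Surjectivity is clear, and the kernel is computed by weak approximation: an element of $U_\mathcal{P} := \{\alpha \in K^* : v_\mathcal{P}(\alpha)=0\}$ maps trivially to $\mathrm{Cl}^0(K)$ exactly when it differs from a unit in $U_{\mathcal{P},1} := \{\alpha : \alpha \equiv 1 \nsmod{\mathcal{P}}\}$ by a constant in $\fq^*$. Since reduction $U_\mathcal{P}\to(\mathcal{O}_K/\mathcal{P})^*$ is surjective with kernel $U_{\mathcal{P},1}$, the identification of the kernel with $(\mathcal{O}_K/\mathcal{P})^*/\fq^*$ follows. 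Then I apply the snake lemma to this sequence with the vertical map being multiplication by $2$ on each term. The right-exactness gives the required
\begin{equation*}
    (\mathcal{O}_K/\mathcal{P})^*/\fq^*((\mathcal{O}_K/\mathcal{P})^*)^2 \to \mathrm{Cl}^0_\mathcal{P}(K)/(\mathrm{Cl}^0_\mathcal{P}(K))^2 \to \mathrm{Cl}^0(K)/(\mathrm{Cl}^0(K))^2 \to 1,
\end{equation*}
while Hensel's lemma (valid since $q$ is odd) gives $\mathcal{O}_\mathcal{P}^*/(\mathcal{O}_\mathcal{P}^*)^2 \cong (\mathcal{O}_K/\mathcal{P})^*/((\mathcal{O}_K/\mathcal{P})^*)^2$, which surjects onto the leftmost group above.

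Second, I define the map $S(K) \to \mathcal{O}_\mathcal{P}^*/(\mathcal{O}_\mathcal{P}^*)^2$: given $\overline{u}\in S(K)$, choose a lift $u\in V(K)$ coprime to $\mathcal{P}$ (possible because $v_\mathcal{P}(u)$ is even, so multiplying by a uniformizer squared adjusts the valuation). Then $u\in\mathcal{O}_\mathcal{P}^*$, and its class mod $(\mathcal{O}_\mathcal{P}^*)^2$ is independent of the choice of lift. By the very definition of $S_\mathcal{P}(K)$ together with Hensel, the kernel of this map is exactly $S_\mathcal{P}(K)$, giving exactness at $S(K)$ and at $S_\mathcal{P}(K)$ (where the map is inclusion).

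The remaining step, and the main obstacle, is exactness at $\mathcal{O}_\mathcal{P}^*/(\mathcal{O}_\mathcal{P}^*)^2$: the image of $S(K)$ must coincide with the kernel of the map into $\mathrm{Cl}^0_\mathcal{P}(K)/(\mathrm{Cl}^0_\mathcal{P}(K))^2$. For the containment $\subseteq$, if $\overline{u}\in S(K)$ with lift $u\in V(K)$ coprime to $\mathcal{P}$, then $(u)=2\mathcal{D}$ for some divisor $\mathcal{D}$, so the principal class of $u$ is a square in $\mathrm{Cl}^0_\mathcal{P}(K)$, showing vanishing. For $\supseteq$, if $u\in\mathcal{O}_\mathcal{P}^*$ lifts to $\alpha\in U_\mathcal{P}$ whose class in $\mathrm{Cl}^0_\mathcal{P}(K)$ is $2\mathcal{E} + \gamma$ with $\gamma\in (\mathcal{O}_K/\mathcal{P})^*/\fq^*$, then after adjusting $\alpha$ within its coset $(\alpha\cdot\fq^*\cdot U_{\mathcal{P},1})$ and multiplying by a principal square, we may arrange $(\alpha)=2\mathcal{D}$ in $D_K$, i.e.\ $\alpha\in V(K)$, producing the desired preimage in $S(K)$. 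The subtle point to verify carefully is that the ambiguity by $(\mathcal{O}_K/\mathcal{P})^*/\fq^*\cdot ((\mathcal{O}_K/\mathcal{P})^*)^2$ coming from the snake-lemma cokernel matches the ambiguity in choosing a lift of $u$ to $U_\mathcal{P}$, which is exactly the reason the Hensel identification $\mathcal{O}_\mathcal{P}^*/(\mathcal{O}_\mathcal{P}^*)^2 \cong (\mathcal{O}_K/\mathcal{P})^*/((\mathcal{O}_K/\mathcal{P})^*)^2$ must be composed with the quotient by $\fq^*$ before landing in the ray class group modulo squares.
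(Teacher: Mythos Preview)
Your approach is essentially the same as the paper's. The paper sketches its proof as: start from the exact sequence $\mathcal{O}_\mathcal{P}^*\to \mathrm{Cl}_\mathcal{P}^0 \to \mathrm{Cl}^0\to 1$, tensor with $\mathbb{Z}/2\mathbb{Z}$ (equivalently, your snake lemma for multiplication by $2$) to obtain the right-exact piece, and then invoke the definitions of $S(K)$ and $S_\mathcal{P}(K)$ to supply the left part --- exactly what you carry out in detail, following \cite{CDyDO}. Your more explicit identification of the kernel as $(\mathcal{O}_K/\mathcal{P})^*/\fq^*$ and the verification of exactness at $\mathcal{O}_\mathcal{P}^*/(\mathcal{O}_\mathcal{P}^*)^2$ by producing a virtual $2$-unit from a square ray-class representative is precisely the content the paper leaves to the reference.
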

\begin{proof}
    The proof is essentially identical to the proof of \cite[Lemma 3.7]{CDyDO}, starting with the exact sequence
    \begin{equation*}
        \mathcal{O}_\mathcal{P}^*\to \mathrm{Cl}_\mathcal{P}^0 \to \mathrm{Cl}^0\to 1,
    \end{equation*}
    and then taking tensor products and applying the definitions of $S(K)$ and $S_\mathcal{P}(K)$.
\end{proof}
As $\mathcal{O}_\mathcal{P}^*/(\mathcal{O}_\mathcal{P}^*)^2$ has cardinality $2$, we have that $\abs{S(K)} = \abs{S_\mathcal{P}(K)}$, or $\abs{S(K)} = 2\abs{S_\mathcal{P}(K)}$. In the case that $\abs{S(K)} = 2\abs{S_\mathcal{P}(K)}$, $\chi_{L(\mathcal{A},u)/K}(\mathcal{P})$ has positive sign for half of the representatives $u$, and negative sign for the other half. Hence, we may restrict our attention to the case $\abs{S(K)} = \abs{S_\mathcal{P}(K)}$, in which case $\chi_{L(\mathcal{A},u)/K}(\mathcal{P}) = \chi_{L(\mathcal{A})/K}(\mathcal{P})$ only depends on $\mathcal{A}$. Specifically, if $\mathcal{P}\mid \mathcal{A}$, then the value is zero, the value is $1$ if $\mathcal{A}$ is a square in $\mathrm{Cl}_\mathcal{P}$, and $-1$ otherwise.

In conclusion, we have shown that
\begin{equation}\label{tobeprim}
    \sum_\sumstack{\mathcal{A} \text{ squarefree} \\ \mathcal{A}\in \mathrm{Cl}(K)^2 \\ \deg_K(\mathcal{A}) = 2n-2j }\sum_{u\in S(K)}\chi_{L(\mathcal{A},u)/K}(\mathcal{P}) = \abs{S(K)}\mathbf{1}_{\abs{S(K)}=\abs{S_\mathcal{P}(K)}}\sum_\sumstack{\mathcal{A} \text{ squarefree}  \\ \deg_K(\mathcal{A}) = 2n-2j\\ \mathcal{P}\nmid \mathcal{A} }\left(2\cdot \mathbf{1}_{\mathcal{A}\in \mathrm{Cl}_\mathcal{P}(K)^2}-\mathbf{1}_{\mathcal{A}\in \mathrm{Cl}(K)^2}\right).
\end{equation}
We remark that Lemma \ref{seqlemma} shows that the condition $\abs{S(K)}=\abs{S_\mathcal{P}(K)}$ is equivalent to $\abs{\mathrm{Cl}^0_\mathcal{P}(K)/(\mathrm{Cl}^0_\mathcal{P}(K))^2} = 2\abs{\mathrm{Cl}^0(K)/(\mathrm{Cl}^0(K))^2}$. We expand the indicator functions into a sum of characters
\begin{equation*}
    \abs{\mathrm{Cl}^0(K)/(\mathrm{Cl}^0(K))^2}\big(2\cdot \mathbf{1}_{\mathcal{A}\in \mathrm{Cl}_\mathcal{P}(K)^2}-\mathbf{1}_{\mathcal{A}\in \mathrm{Cl}(K)^2}\big) = \sum_{\chi \in \widehat{\mathrm{Cl}_\mathcal{P}^0(K)}[2]}\chi(\mathcal{A})-\sum_{\chi \in \widehat{\mathrm{Cl}^0(K)}[2]}\chi(\mathcal{A}) = \sum_\sumstack{\chi \in \widehat{\mathrm{Cl}_\mathcal{P}^0(K)}[2]\\ \chi \text{ primitive}}\chi(\mathcal{A}).
\end{equation*}
If $\mathcal{P}\mid \mathcal{A}$, we use the convention that $\chi(A)= 0$ so that we may include these $\mathcal{A}$ in the sum. Recall also that $\abs{\mathrm{Cl}^0(K)/(\mathrm{Cl}^0(K))^2} = \abs{S(K)}/2$. Hence, \eqref{tobeprim} equals
\begin{equation}\label{chiprimorigin}
    2\cdot \mathbf{1}_{\abs{S(K)}=\abs{S_\mathcal{P}(K)}}\sum_\sumstack{\chi \in \widehat{\mathrm{Cl}_\mathcal{P}^0(K)}[2]\\ \chi \text{ primitive}}\sum_\sumstack{\mathcal{A} \text{ squarefree}  \\ \deg_K(\mathcal{A}) = 2n-2j }\chi(\mathcal{A}).
\end{equation}

Now, similar to \cite[Section 3.1]{Rudnick}, we use inclusion-exclusion to extend the sum over $\mathcal{A}$ to non-squarefree divisors. Specifically, by inclusion-exclusion,
\begin{equation*}
    \mathbf{1}_{\mathcal{A} \text{ squarefree}} = \sum_{\mathcal{C}^2 \mid \mathcal{A}}\mu(C).
\end{equation*}
Hence, 
\begin{equation*}
    \sum_\sumstack{\mathcal{A} \text{ squarefree}  \\ \deg_K(\mathcal{A}) = 2n-2j }\chi(\mathcal{A}) = \sum_\sumstack{\deg_K(\mathcal{A}) = 2n-2j }\sum_{\mathcal{C}^2\mid \mathcal{A}}\mu(C)\chi(\mathcal{A}) = \sum_{2a+b=2n-2j}\sum_{\deg_K(\mathcal{B})=b}\chi(\mathcal{B})\sum_{\deg_K(\mathcal{C})=a}\mu(\mathcal{C})\chi(\mathcal{C}^2).
\end{equation*}
As $\chi(\mathcal{C}^2) = 1$, unless $\mathcal{P}\mid \mathcal{C}$, the above equals
\begin{equation*}
    \sum_{2a+b=2n-2j}\sum_{\deg_K(\mathcal{B})=b}\chi(\mathcal{B})\sum_\sumstack{\deg_K(\mathcal{C})=a \\ \mathcal{P} \nmid C}\mu(\mathcal{C}).
\end{equation*}

We can now conclude that 
\begin{equation}\label{expandcharsum}
\begin{split}
     &\sum_{(L,K)\in\tilde{\mathcal{F}}'_{\alpha,\beta}(X)}\sum_{\deg_K\mathcal{P}= m}\chi_{L/K}(\mathcal{P}) \\&= \sum_\sumstack{j\geq 1\\ X^{\alpha}<q^{2j}\leq X^{\beta}} \sum_\sumstack{[K:\fq(T)]=2\\\abs{\Disc(K)}=q^{2j}}\sum_{\deg_K(\mathcal{P})=m}2\cdot \mathbf{1}_{\abs{S(K)}=\abs{S_\mathcal{P}(K)}}\sum_\sumstack{\chi \in \widehat{\mathrm{Cl}_\mathcal{P}^0(K)}[2]\\ \chi \text{ primitive}}\sum_{2a+b=2n-2j}\sum_{\deg_K(\mathcal{B})=b}\chi(\mathcal{B})\sum_\sumstack{\deg_K(\mathcal{C})=a \\ \mathcal{P} \nmid C}\mu(\mathcal{C}).
\end{split}
\end{equation}
As in the proof of \cite[Lemma 4]{Rudnick}, we may write
\begin{equation}\label{pcoprimcomb}
    \sum_\sumstack{\deg_K(\mathcal{C})=a \\ \mathcal{P} \nmid C}\mu(\mathcal{C}) = \sum_\sumstack{\deg_K(\mathcal{C})=a}\mu(\mathcal{C})+ \sum_\sumstack{\deg_K(\mathcal{C})=a-\deg_K(\mathcal{P}) \\ \mathcal{P} \nmid C}\mu(\mathcal{C}) =\sum_\sumstack{\deg_K(\mathcal{C})=a}\mu(\mathcal{C})+ \sum_{\ell=1}^{\lfloor a/m\rfloor}\sum_\sumstack{\deg_K(\mathcal{C})=a-\ell m }\mu(\mathcal{C}).
\end{equation}

\subsection{Upper bounds on sums of divisors}
Before continuing our study of the one-level density, we formulate a few useful lemmas. All of the lemmas below are proven using contour integration, very similarly to the methods used in Section \ref{quadsect}. For the sake of brevity, we only provide brief proof sketches.

\begin{lemma}\label{polylemma}
    Let $H$ be a polynomial of degree $d \ll n$, with constant coefficient $1$, and with roots of absolute value $q^{-1/2}$. Then, the absolute value of the $\ell$th coefficient of $H$ is
    \begin{equation*}
        \ll q^{\ell/2}q^{o_q(n)}.
    \end{equation*}
\end{lemma}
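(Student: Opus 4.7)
The plan is to extract the $\ell$th coefficient of $H$ via Cauchy's formula and then bound the integral trivially on a well-chosen contour, mirroring the contour integration technique employed throughout Section \ref{quadsect}.

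Concretely, since $H$ has constant term $1$ and all roots have absolute value $q^{-1/2}$, I would factor
\begin{equation*}
    H(u) = \prod_{j=1}^d \left(1 - q^{1/2} e^{i\theta_j} u\right)
\end{equation*}
for some angles $\theta_j \in \mathbb{R}$, and then write the $\ell$th coefficient as
\begin{equation*}
    [u^\ell] H(u) = \frac{1}{2\pi i}\oint_{|u|=r} \frac{H(u)}{u^{\ell+1}}\,du.
\end{equation*}
The natural choice of contour is $r = q^{-1/2}$, which matches the radius on which the roots lie and on which the Euler-product-type bounds used earlier are sharp.

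On this circle one has the trivial pointwise bound $|H(u)| \le \prod_{j=1}^d (1 + q^{1/2}|u|) = 2^d$, so the standard $ML$-estimate gives
\begin{equation*}
    \left| [u^\ell] H(u) \right| \ll q^{\ell/2} \cdot 2^d.
\end{equation*}
Since by hypothesis $d \ll n$, we have $2^d \le 2^{Cn}$ for some absolute constant $C$; rewriting this as $2^{Cn} = q^{Cn\log 2/\log q}$ and noting that $\log 2/\log q \to 0$ as $q \to \infty$ yields $2^d = q^{o_q(n)}$, which is the desired factor.

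There is essentially no obstacle here beyond choosing the right contour: the only subtle point is the passage from $2^d$ to $q^{o_q(n)}$, which relies crucially on the convention (stated in the introduction) that implicit constants may depend on $q$ and that $o_q(1)$ denotes a quantity tending to $0$ as $q \to \infty$. Because $d$ is allowed to grow linearly in $n$, one cannot improve this to a bound with no dependence on $q$; the estimate is exactly at the threshold of what the trivial bound $|e_\ell(e^{i\theta_1},\dots,e^{i\theta_d})| \le \binom{d}{\ell} \le 2^d$ gives, and any proof must absorb a factor of size $2^{O(n)}$ into the $q^{o_q(n)}$ error.
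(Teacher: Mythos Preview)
Your proof is correct and essentially identical to the paper's: both write the $\ell$th coefficient as a Cauchy integral, evaluate it on the circle $|u|=q^{-1/2}$, and bound $|H(u)|\le 2^d$ via the product expansion, absorbing $2^d$ into $q^{o_q(n)}$ using $d\ll n$. The paper phrases it as ``shifting'' the contour from $|u|=q^{-2}$ to $|u|=q^{-1/2}$, but since $H$ is a polynomial this is cosmetic.
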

\begin{proof}
    One shifts the integral 
    \begin{equation*}
    \frac{1}{2\pi i}\int_{\abs{u}= q^{-2}}\frac{H(u)}{u^{\ell+1}}du    
    \end{equation*}
    to the circle $\abs{u}=q^{-1/2}$ and uses a product expansion of $H(u)$.
\end{proof}

Next, we have a lemma bounding sums over $\mu(\mathcal{C})$. Recall that we write $o_q(1)$ for a term that tends to $0$ as $q\to \infty$. 
\begin{lemma}\label{mulemma}
    Let $K$ be a function field of genus $g_K \ll n$. Then, for $\ell\ll n$, we have that
    \begin{equation*}
        \sum_\sumstack{\deg_K(\mathcal{C})=\ell}\mu(\mathcal{C}) \ll q^{\ell/2}q^{o_q(n)}.
    \end{equation*}
\end{lemma}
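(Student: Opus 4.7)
The plan is to express the $\mu$-sum as the $\ell$th coefficient of $1/\zeta_K(u)$ and estimate it by a Cauchy contour integral, in direct parallel with the proof sketch of Lemma~\ref{polylemma}. By multiplicativity of $\mu$ combined with the Euler product for $\zeta_K$ and \eqref{zetaexpl},
\begin{equation*}
\sum_{\mathcal{C}\in D_K}\mu(\mathcal{C})\,u^{\deg_K(\mathcal{C})} = \prod_{\mathcal{P}}\bigl(1 - u^{\deg_K(\mathcal{P})}\bigr) = \frac{1}{\zeta_K(u)} = \frac{(1-u)(1-qu)}{P_K(u)},
\end{equation*}
where the left-hand sum runs over effective divisors. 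Cauchy's formula then gives
\begin{equation*}
\sum_{\deg_K(\mathcal{C})=\ell}\mu(\mathcal{C}) = \frac{1}{2\pi i}\oint_{|u|=q^{-2}}\frac{(1-u)(1-qu)}{u^{\ell+1}\,P_K(u)}\,du.
\end{equation*}

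Next I would shift the contour outward to $|u| = r$ for some $r < q^{-1/2}$, which is permitted since the only singularities of the integrand besides $u=0$ are the zeros of $P_K(u)$, all of which lie on $|u|=q^{-1/2}$ by the Riemann Hypothesis for $P_K$. Factoring
\begin{equation*}
P_K(u) = \prod_{j=1}^{2g_K}\bigl(1 - q^{1/2} e^{i\theta_{j,K}} u\bigr),
\end{equation*}
the triangle inequality on $|u| = r$ gives the lower bound $|P_K(u)| \geq (1 - q^{1/2} r)^{2g_K}$. Choosing $r = q^{-1/2}/2$, so that $1 - q^{1/2} r = 1/2$, together with the crude estimates $|(1-u)(1-qu)| \ll q^{1/2}$ on this circle and contour length $O(q^{-1/2})$, yields
\begin{equation*}
\Bigl|\sum_{\deg_K(\mathcal{C})=\ell}\mu(\mathcal{C})\Bigr| \ll q^{\ell/2}\cdot 2^{\ell + 2g_K}.
\end{equation*}

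Finally, since by hypothesis $g_K, \ell \ll n$, the residual factor satisfies $2^{\ell + 2g_K} = 2^{O(n)} = q^{O(n)/\log_2 q}$, and $O(n)/\log_2 q$ is precisely of the shape $o_q(n)$ as $q \to \infty$, giving the claimed bound $q^{\ell/2}q^{o_q(n)}$. There is no substantive obstacle beyond bookkeeping with the $o_q$ notation; the only real choice is the contour radius, and one sees that any fixed gap from $|u| = q^{-1/2}$ works, because the cost $(1 - q^{1/2} r)^{-2g_K}$ of staying away from the zero circle is of the form $C^{g_K}$ with $C$ an absolute constant, which is absorbed into $q^{o_q(n)}$ once $q$ is taken large.
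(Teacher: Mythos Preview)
Your proof is correct and follows essentially the same approach as the paper: both express the sum as a Cauchy integral of $1/\zeta_K(u)=(1-u)(1-qu)/P_K(u)$ and shift the contour outward to a radius just inside the zero circle $|u|=q^{-1/2}$. The only difference is the specific radius chosen (you take $r=q^{-1/2}/2$, the paper indicates $r=q^{-1/2-1/\log q}$), and as you correctly observe, any choice with $q^{1/2}r$ bounded away from $1$ yields a factor $C^{g_K+\ell}=q^{o_q(n)}$.
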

\begin{proof}
    One studies the generating function $1/\zeta_K(u) = (1-u)(1-qu)/P_K(u)$ using contour integration, shifting the contour to $\abs{u} = q^{-1/2-1/\log q}$.
\end{proof}

Finally, we have a lemma allowing us to count the number of divisors of a certain degree.
\begin{lemma}\label{divisorctlemma}
    Let $K$ be a function field of genus $g_K\ll n$, and let $\ell \ll n$. Then,
    \begin{equation}\label{exactdivct}
        \sum_{\deg_K(\mathcal{A})=\ell}1 =\sum_{k=0}^{\min\{2g_K,\ell\}}a_{k,K}\left(\frac{q^{\ell-k+1}-1}{q-1}\right), 
    \end{equation}
    where $a_{k,K}$ is the $k$th coefficient of $P_K(u)$. Moreover, we have the upper bound
    \begin{equation*}
        \sum_{\deg_K(\mathcal{A})=\ell}1 \ll q^{\ell}q^{o_q(n)}.
    \end{equation*}
\end{lemma}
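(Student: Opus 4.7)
The plan is to read the exact formula directly off the closed form $\zeta_K(u) = P_K(u)/((1-u)(1-qu))$ from \eqref{zetaexpl}, and to derive the upper bound as a direct consequence of the exact formula combined with the polynomial coefficient estimate of Lemma \ref{polylemma}. I expect no real obstacle: once the coefficient extraction is set up, both claims are formal manipulations.

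First, I would recall that the generating function identity
\begin{equation*}
\sum_{\mathcal{A}\in D_K} u^{\deg_K(\mathcal{A})} = \zeta_K(u) = \frac{P_K(u)}{(1-u)(1-qu)}
\end{equation*}
holds as a power series identity. Writing $P_K(u) = \sum_{k=0}^{2g_K} a_{k,K}\, u^k$ and using the geometric-series expansion
\begin{equation*}
\frac{1}{(1-u)(1-qu)} = \sum_{j=0}^{\infty} \frac{q^{j+1}-1}{q-1}\, u^{j},
\end{equation*}
the coefficient of $u^{\ell}$ in the product is obtained by the Cauchy product, giving exactly
\begin{equation*}
\sum_{k=0}^{\min\{2g_K,\ell\}} a_{k,K}\,\frac{q^{\ell-k+1}-1}{q-1},
\end{equation*}
since terms with $k>2g_K$ or $k>\ell$ contribute nothing. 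This establishes the exact formula \eqref{exactdivct}.

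For the upper bound, I would apply Lemma \ref{polylemma} to $H = P_K$, which is a polynomial of degree $2g_K \ll n$ with constant coefficient $1$ and roots of absolute value $q^{-1/2}$ by the Riemann hypothesis for function fields. This gives $\lvert a_{k,K}\rvert \ll q^{k/2} q^{o_q(n)}$. Substituting into the exact formula and using the trivial bound $(q^{\ell-k+1}-1)/(q-1) \ll q^{\ell-k}$, one obtains
\begin{equation*}
\sum_{\deg_K(\mathcal{A})=\ell} 1 \ll q^{o_q(n)} \sum_{k=0}^{\min\{2g_K,\ell\}} q^{k/2}\, q^{\ell-k} = q^{\ell} q^{o_q(n)} \sum_{k=0}^{\min\{2g_K,\ell\}} q^{-k/2} \ll q^{\ell} q^{o_q(n)},
\end{equation*}
since the geometric series over $k$ is bounded by an absolute constant. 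This gives the claimed bound and completes the proof.
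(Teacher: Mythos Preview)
Your proof is correct and follows essentially the same approach as the paper: the exact formula is obtained by comparing coefficients in the identity $\zeta_K(u)=P_K(u)/((1-u)(1-qu))$ using the expansion $\frac{1}{(1-qu)(1-u)}=\sum_{j\ge 0}\frac{q^{j+1}-1}{q-1}u^j$, exactly as the paper does. For the upper bound the paper indicates a direct contour integration, whereas you deduce it from the exact formula together with Lemma~\ref{polylemma}; since Lemma~\ref{polylemma} is itself proved by contour integration, this is only a cosmetic repackaging, and your geometric-series estimate $\sum_{k\ge 0}q^{-k/2}\ll 1$ cleanly gives the claimed $q^{\ell}q^{o_q(n)}$ bound.
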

\begin{proof}
    In order to obtain the exact formula, one compares coefficients with the generating function $P_K(u)/\big((1-qu)(1-u)\big)$, using that
    \begin{equation*}
        \frac{1}{(1-qu)(1-u)} = \sum_{\ell' = 0}^\infty \frac{q^{\ell'+1}-1}{q-1}\cdot u^{\ell'}.
    \end{equation*}
    For the upper bound, one uses contour integration. 
\end{proof}

\subsection{Splitting the summation and summing over $\mathcal{P}$}
The sum
\begin{equation*}
    \sum_{\deg_K(\mathcal{B})=b}\chi(\mathcal{B})
\end{equation*}
is the $b$th coefficient of the Hecke $L$-function $L(u,\chi)$. Now, by \cite[Theorem 9.24A]{Rosen}, $L(u,\chi)$ is in fact a polynomial in $u$ of degree $2g_K-2+\deg_K(\mathcal{P})$. Moreover, it obeys a functional equation, implying that
\begin{equation}\label{fcnaleqHecke}
    \sum_{\deg_K(\mathcal{B})=b}\chi(\mathcal{B}) = q^{b-g_K+1-m/2}\sum_{\deg_K(\mathcal{B})=2g_K-2+m-b}\chi(\mathcal{B}).
\end{equation}
We will apply this functional equation at a later stage. For now, we use the observation that $L(u,\chi)$ is a polynomial, together with \eqref{pcoprimcomb}, Lemma \ref{polylemma} and Lemma \ref{mulemma} to conclude that the condition $\mathcal{P}\nmid \mathcal{C}$ can be removed from \eqref{expandcharsum}, at a total cost of an error $o_{X}(1)$ to \eqref{onelev}, as long as $q$ is larger than some absolute constant.

We now introduce the notation 
\begin{equation*}
    \sigma_K(a) = \sum_{\deg_K({\mathcal{C})= a}}\mu(C).
\end{equation*}
Furthermore, we write $2r:= 2n-2j$, and we fix a small $\delta > 0$. Next, we split the summation over $a$ and $b$ in \eqref{expandcharsum} (with the condition $\mathcal{P}\nmid \mathcal{C}$ removed) into two parts. One part $S$ where $a \leq \delta r$, and one part $T$ where instead $a >\delta r$.

For the sum $T$ we interchange the order of summation, obtaining 
\begin{equation*}
    T = 
2\sum_\sumstack{j\geq 1\\ X^{\alpha}<q^{2j}\leq X^{\beta}} \sum_\sumstack{[K:\fq(T)]=2\\\abs{\Disc(K)}=q^{2j}}\sum_\sumstack{2a+b=2r\\ a > \delta r}\sigma_K(a)\sum_{\deg_K(\mathcal{B})=b}\sum_{\deg_K(\mathcal{P})=m} \mathbf{1}_{\abs{S(K)}=\abs{S_\mathcal{P}(K)}}\sum_\sumstack{\chi \in \widehat{\mathrm{Cl}_\mathcal{P}^0(K)}[2]\\ \chi \text{ primitive}}\chi(\mathcal{B}).
\end{equation*}
If $\mathcal{P}\mid \mathcal{B}$, the innermost sum is zero. Else, it equals $\mathbf{1}_{\mathcal{B} \in \mathrm{Cl}(K)^2}\chi_{L(\mathrm{sqf}(\mathcal{B}))/K}(\mathcal{P})\abs{S(K)}/2$ by the same reasoning as that used for obtaining \eqref{chiprimorigin}. Hence,
\begin{equation*}
    T = \abs{S(K)}\sum_\sumstack{j\geq 1\\ X^{\alpha}<q^{2j}\leq X^{\beta}} \sum_\sumstack{[K:\fq(T)]=2\\\abs{\Disc(K)}=q^{2j}}\sum_\sumstack{2a+b=2r\\ a > \delta r}\sigma_K(a)\sum_\sumstack{\deg_K(\mathcal{B})=b\\ \mathcal{B}\in \mathrm{Cl}(K)^2}\sum_{\deg_K(\mathcal{P})=m} \mathbf{1}_{\abs{S(K)}=\abs{S_\mathcal{P}(K)}}\chi_{L(\mathrm{sqf}(\mathcal{B}))/K}(\mathcal{P}).
\end{equation*}
Recall that for a divisor $\mathcal{B}\in \mathrm{Cl}(K)^2$, $\chi_{L(\mathcal{B})/K}(\mathcal{P})$ is well-defined as $\abs{S(K)} = \abs{S_\mathcal{P}(K)}$. However, if this condition does not hold, we may still define this character by simply associating to $\mathcal{B}$, a choice of a field of relative discriminant $\mathrm{sqf}(\mathcal{B})$. Clearly, the value of the character will then depend on this choice, but this definition will still be useful to us.

From the definition of $S_{\mathcal{P}}(K)$, we have that $S_\mathcal{P}(K) = S(K)$ if and only if all elements $u\in S(K)$ are squares modulo $\mathcal{P}$. As the degree of $\mathcal{P}$ is even, this always holds for $u\in \fq^*$. The group $S(K)/\fq^*$ is a $2$-group of rank $r_K$, with generators $u_1,...,u_{r_K}$, say. Let $\psi_1$ be the nontrivial character of the group $\mathcal{O}_\mathcal{P}^*/(\mathcal{O}_\mathcal{P}^*)^2\simeq \mathbb{Z}/2\mathbb{Z}$. Then,
\begin{equation*}
    \mathbf{1}_{\abs{S(K)}=\abs{S_\mathcal{P}(K)}} = \frac{2}{\abs{S(K)}}\sum_{u\in S(K)/\fq^*}\psi_{1}(u).
\end{equation*}
Next, we note that 
\begin{equation*}
    \sum_{u\in S(K)/\fq^*}\psi_1(u)\chi_{L(\mathrm{sqf}(\mathcal{B}))/K}(\mathcal{P}) = \sum_{u\in S(K)/\fq^*}\chi_{L(\mathrm{sqf}(\mathcal{B}),u)/K}(\mathcal{P}),
\end{equation*}
so that
\begin{equation}\label{TsumtoPave}
    T = 2\sum_\sumstack{j\geq 1\\ X^{\alpha}<q^{2j}\leq X^{\beta}} \sum_\sumstack{[K:\fq(T)]=2\\\abs{\Disc(K)}=q^{2j}}\sum_{u\in S(K)/\fq^*}\sum_\sumstack{2a+b=2r\\ a > \delta r}\sigma_K(a)\sum_\sumstack{\deg_K(\mathcal{B})=b\\ \mathcal{B}\in \mathrm{Cl}(K)^2}\sum_{\deg_K(\mathcal{P})=m} \chi_{L(\mathrm{sqf}(\mathcal{B}),u)/K}(\mathcal{P}).
\end{equation}

The field $L(\mathrm{sqf}(\mathcal{B}),u)$ equals $K$ only if $\mathrm{sqf}(\mathcal{B})$ is the trivial divisor, i.e. when $\mathcal{B}$ is a square. Furthermore, in this case only one of the fields $L(\mathrm{sqf}(\mathcal{B}),u)$ as $u$ ranges over the elements in $S(K)/\fq^*$ equals $K$. The other fields are quadratic extensions of $K$, which are in fact also geometric (this is why we exclude $\fq^*$).

Next, we wish to average the character above over $\mathcal{P}$. For this, we use the following lemma.
\begin{lemma}
    Let $L$ be a geometric quadratic extension of $K$, with $g_K\ll n$, and $\chi_{L/K}$ the associated Kronecker character. Then,
    \begin{equation*}
        \sum_{\deg_K(\mathcal{P})=m}\chi_{L/K}(\mathcal{P}) \ll q^{m/2}q^{o_q(n)}.
    \end{equation*}
\end{lemma}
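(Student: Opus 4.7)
The plan is to carry out the standard function-field prime number theorem argument for the Hecke $L$-function $L(u, \chi_{L/K}) = P_{L/K}(u) = \zeta_L(u)/\zeta_K(u)$. Because $L/K$ is a geometric quadratic extension, $\chi_{L/K}$ is the nontrivial character of $\mathrm{Gal}(L/K)$ and $P_{L/K}$ is a polynomial in $u$ of degree $N := 2g_L - 2g_K$; combining the Riemann--Hurwitz formula with the hypothesis $g_K \ll n$ and the bound $\deg_L \Disc(L/K) \ll n$ coming from the applications gives $N \ll n$. By the Riemann hypothesis for function fields, the roots of $P_{L/K}$ lie on $\abs{u} = q^{-1/2}$, so I would factor $P_{L/K}(u) = \prod_{j=1}^N (1 - q^{1/2} e^{i\theta_j} u)$, logarithmically differentiate, and compare with the Euler product to recover identity \eqref{cequality} in the form
\[
-q^{m/2} c_m^{L/K} = \sum_{\deg_K(\mathcal{P}) \mid m} \deg_K(\mathcal{P}) \chi_{L/K}(\mathcal{P}^{m/\deg_K(\mathcal{P})}),
\]
with the trivial bound $\abs{c_m^{L/K}} \leq N$.

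Next I would isolate the contribution of the term $\deg_K(\mathcal{P}) = m$ on the right and estimate the remaining proper divisors trivially. Every proper divisor $d$ of $m$ satisfies $d \leq m/2$, and the number of primes of $K$ of degree $d$ is bounded by the total number of effective divisors of that degree, which by Lemma \ref{divisorctlemma} is $\ll q^d q^{o_q(n)} \leq q^{m/2} q^{o_q(n)}$. Since there are at most $m$ such divisors and each character value has modulus at most one, their combined contribution is $\ll m q^{m/2} q^{o_q(n)}$. Combining this with the bound on $c_m^{L/K}$ yields
\[
m \left| \sum_{\deg_K(\mathcal{P}) = m} \chi_{L/K}(\mathcal{P}) \right| \ll N q^{m/2} + m q^{m/2} q^{o_q(n)}.
\]

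Dividing by $m$ and absorbing the factor $N/m \leq N \ll n \leq q^{o_q(n)}$ — which holds uniformly since $\log n / (n \log q) = o_q(1)$ — gives the claimed estimate. I do not expect any substantive obstacle; the argument is entirely parallel to the passage from \eqref{cdef} to \eqref{primeidealct} carried out earlier in the section. The only point requiring a little care is ensuring that every polynomial-in-$n$ factor can be absorbed into $q^{o_q(n)}$, which is unproblematic in the large-$q$ regime used throughout the paper.
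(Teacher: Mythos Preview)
Your proposal is correct and follows essentially the same route as the paper: both isolate the $\deg_K(\mathcal{P})=m$ term from the explicit-formula identity \eqref{cequality}, bound the main term via $\lvert c_m^{L/K}\rvert \le N$ from \eqref{cdef}, and control the proper-divisor contribution trivially. The only cosmetic difference is that the paper invokes the prime ideal count \eqref{primeidealct} for the latter step where you use the cruder divisor bound of Lemma~\ref{divisorctlemma}; either suffices, and your explicit remark that one needs $N\ll n$ (via Riemann--Hurwitz and the discriminant bound in the applications) is a hypothesis the paper's lemma statement tacitly assumes.
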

\begin{proof}
    We have that
    \begin{equation*}
    \begin{split}
        \sum_{\deg_K(\mathcal{P})=m}&\chi_{L/K}(\mathcal{P}) = \frac{1}{m}\sum_{\deg_K(\mathcal{P})=m}\deg_K(\mathcal{P})\chi_{L/K}(\mathcal{P}) = \frac{1}{m}\sum_{\deg_K(\mathcal{P})\mid m}\deg_K(\mathcal{P})\chi_{L/K}(\mathcal{P}^{m/\deg_K(\mathcal{P})}) \\&- \frac{1}{m}\sum_\sumstack{\deg_K(\mathcal{P})\mid m\\ \deg_K(\mathcal{P})\leq m/2}\deg_K(\mathcal{P})\chi_{L/K}(\mathcal{P}^{m/\deg_K(\mathcal{P})}) = -\frac{q^{m/2}c^{L/K}_m}{m}-\frac{1}{m}\sum_\sumstack{\deg_K(\mathcal{P})\mid m\\ \deg_K(\mathcal{P})\leq m/2}\deg_K(\mathcal{P}^{m/\deg_K(\mathcal{P})})\chi_{L/K}(\mathcal{P}),
    \end{split}
    \end{equation*}
    where the last step uses \eqref{cequality}. Now, the lemma follows from \eqref{cdef} and \eqref{primeidealct}.
\end{proof}
Using the lemma above, together with Lemma \ref{quadLemma}, Lemma \ref{mulemma} and Lemma \ref{divisorctlemma} we conclude that the contribution to \eqref{TsumtoPave} from nontrivial geometric field extensions of $K$ is
\begin{equation*}
    \ll \sum_{X^\alpha < q^{2j}\leq X^\beta}\sum_\sumstack{2a+b = 2r\\ a \geq \delta r}q^{o_q(n)}q^{m/2+2j+b+a/2} \ll \sum_{X^\alpha < q^{2j}\leq X^\beta}q^{o_q(n)}q^{m/2+2j+2n-2j-3\delta(n-j)/2}\ll \sum_{X^\alpha < q^{2j}\leq X^\beta}q^{o_q(n)}q^{m/2+2n-3\delta(n-j)/2}.
\end{equation*}
As $\beta < 1/2$, $n-j \geq n/2$, so that the bound above is $\ll q^{o_q(n)}q^{m/2+2n-3\delta n/4}$. Hence, also using Proposition \ref{reffieldct}, we see that the contribution to \eqref{onelev} is $\ll q^{o_q(n)-3\delta n/4}$, which is power-saving for large enough $q$.

What remains of $T$ is the trivial contribution coming from square $\mathcal{B}$, i.e. 
\begin{equation*}
    T' := 2\sum_\sumstack{j\geq 1\\ X^{\alpha}<q^{2j}\leq X^{\beta}} \sum_\sumstack{[K:\fq(T)]=2\\\abs{\Disc(K)}=q^{2j}}\sum_\sumstack{2a+b=2r\\ a > \delta r}\sigma_K(a)\sum_\sumstack{\deg_K(\mathcal{B})=b\\ \mathcal{B} \text{ square}}\sum_\sumstack{\deg_K(\mathcal{P})=m\\ \mathcal{P}\nmid \mathcal{B}}1.
\end{equation*}
The condition $\mathcal{P}\nmid \mathcal{B}$ can be removed at the cost of a power-saving error term. Using \eqref{primeidealct} the above becomes
\begin{equation*}
    2\sum_\sumstack{j\geq 1\\ X^{\alpha}<q^{2j}\leq X^{b}} \sum_\sumstack{[K:\fq(T)]=2\\\abs{\Disc(K)}=q^{2j}}\sum_\sumstack{2a+b=2r\\ a > \delta r}\sigma_K(a)\sum_\sumstack{\deg_K(\mathcal{B})=b\\ \mathcal{B} \text{ square}}\left(\frac{q^m}{m}+\mathcal{O}(q^{m/2}g_K)\right).
\end{equation*}
The contribution from the error term is negligible as well, as one sees similarly to how we bounded the contribution from the nontrivial characters.

In conclusion, we have shown that for large enough $q$, the contribution from the sum $T$ to \eqref{onelev} is, up to an error $o(1)$, equal to
\begin{equation*}
    T_1 := -\frac{4}{N\#\tilde{\mathcal{F}}_{\alpha,\beta}(X)}\sum_\sumstack{m\geq 1\\ m \text{ even}}q^{m/2}\widehat{\psi}\left(\frac{m}{N}\right)\sum_\sumstack{j\geq 1\\ X^{\alpha}<q^{2j}\leq X^{\beta}} \sum_\sumstack{[K:\fq(T)]=2\\\abs{\Disc(K)}=q^{2j}}\sum_\sumstack{2a+b=2r\\ a > \delta r}\sigma_K(a)\sum_\sumstack{\deg_K(\mathcal{B})=b/2}1.
\end{equation*}
We remark that the lower bound for $q$ depends so far only on $\delta > 0$. To conclude this section, we use that
\begin{equation*}
    \sum_\sumstack{2a+b=2r}\sigma_K(a)\sum_\sumstack{\deg_K(\mathcal{B})=b/2}1 = 0,
\end{equation*}
as this is the $r$th coefficient of the constant polynomial $1$. Hence, 
\begin{equation}\label{T1lastsec}
    T_1 = \frac{4}{N\#\tilde{\mathcal{F}}'_{\alpha,\beta}(X)}\sum_\sumstack{m\geq 1\\ m \text{ even}}q^{m/2}\widehat{\psi}\left(\frac{m}{N}\right)\sum_\sumstack{j\geq 1\\ X^{\alpha}<q^{2j}\leq X^{\beta}} \sum_\sumstack{[K:\fq(T)]=2\\\abs{\Disc(K)}=q^{2j}}\sum_\sumstack{2a+b=2r\\ a \leq \delta r}\sigma_K(a)\sum_\sumstack{\deg_K(\mathcal{B})=b/2}1.
\end{equation}

\subsection{Studying $S$}\label{studyOGS}
We now turn our attention to the complementary sum $S$ of $T$. By definition,
\begin{equation*}
    S := 
2\sum_\sumstack{j\geq 1\\ X^{\alpha}<q^{2j}\leq X^{\beta}} \sum_\sumstack{[K:\fq(T)]=2\\\abs{\Disc(K)}=q^{2j}}\sum_\sumstack{2a+b=2r\\ a \leq \delta r}\sigma_K(a)\sum_{\deg_K(\mathcal{P})=m} \mathbf{1}_{\abs{S(K)}=\abs{S_\mathcal{P}(K)}}\sum_\sumstack{\chi \in \widehat{\mathrm{Cl}_\mathcal{P}^0(K)}[2]\\ \chi \text{ primitive}}\sum_{\deg_K(\mathcal{B})=b}\chi(\mathcal{B}).
\end{equation*}
We apply the functional equation \eqref{fcnaleqHecke} to the innermost sum. Next, similar to how we arrived at \eqref{TsumtoPave}, we find that
\begin{equation*}
    S = 2\sum_\sumstack{j\geq 1\\ X^{\alpha}<q^{2j}\leq X^{\beta}} \sum_\sumstack{[K:\fq(T)]=2\\\abs{\Disc(K)}=q^{2j}}\sum_{u\in S(K)/\fq^*}\sum_\sumstack{2a+b=2r\\ a \leq  \delta r}\sigma_K(a)q^{b-g_K+1-m/2}\sum_\sumstack{\deg_K(\mathcal{B})=2g_K-2+m-b\\ \mathcal{B}\in \mathrm{Cl}(K)^2}\sum_{\deg_K(\mathcal{P})=m} \chi_{L(\mathrm{sqf}(\mathcal{B}),u)/K}(\mathcal{P}).
\end{equation*}
The contribution from nontrivial field extensions to this sum is 
\begin{equation*}
     \ll \sum_{X^\alpha < q^{2j}\leq X^\beta}\sum_\sumstack{2a+b = 2r\\ a \geq \delta r} q^{o_q(n)}q^{2j+\delta(n-j)/2+b-j}q^{2j+m-b} \ll q^mq^{3n\beta}q^{\delta n/2+o_q(n)}.
\end{equation*}
As $\widehat{\psi}$ is supported in $(-\sigma,\sigma)$ and $N = 2n+ \mathcal{O}(1)$, the total contribution from the nontrivial field extensions to \eqref{onelev} is
\begin{equation*}
    q^{n\sigma-2n+3n\beta+\delta n/2+o_q(n)} = q^{n(\sigma-2+3\beta + \delta/2+o_q(1))}.
\end{equation*}
As long as $\sigma < 2-3\beta$, we may pick some $\delta > 0$, depending only on $\sigma$, so that for large enough $q$, the above is power-saving as $n\to \infty$. The lower bound for $q$ depends only on how close $\sigma$ is to $2-3\beta$.

Similar to the previous section, the contribution from the trivial fields, which cannot directly be absorbed into the error term, is
\begin{equation}\label{sprimedef}
    S' = \frac{2q^m}{m}\sum_\sumstack{j\geq 1\\ X^{\alpha}<q^{2j}\leq X^{\beta}} \sum_\sumstack{[K:\fq(T)]=2\\\abs{\Disc(K)}=q^{2j}}\sum_\sumstack{2a+b=2r\\ a \leq  \delta r}\sigma_K(a)q^{b-g_K+1-m/2}\sum_\sumstack{\deg_K(\mathcal{B})=g_K-1+m/2-b/2}1.
\end{equation}
From the exact formula \eqref{exactdivct}, we know that
\begin{equation*}
    q^{b-g_K+1-m/2}\sum_\sumstack{\deg_K(\mathcal{B})=g_K-1+m/2-b/2}1 = \frac{q}{q-1}\sum_{k=0}^{2g_K}\mathbf{1}_{k\leq g_K-1+m/2-b/2}\cdot a_{k,K}\left(q^{b/2-k}-q^{b-g_K-m/2}\right).
\end{equation*}
Hence, 
\begin{equation*}
    S' = \frac{2q^{m+1}}{m(q-1)}\sum_\sumstack{j\geq 1\\ X^{\alpha}<q^{2j}\leq X^{\beta}} \sum_\sumstack{[K:\fq(T)]=2\\\abs{\Disc(K)}=q^{2j}}\sum_{k=0}^{2g_K}a_{k,K}\sum_\sumstack{2a+b=2r\\ b/2 >  r-\delta r\\ b/2\leq g_K-1+m/2-k}\sigma_K(a)\left(q^{b/2-k}-q^{b-g_K-m/2}\right),
\end{equation*}
so that $S'$ naturally splits into two sums, giving a contribution $S_1+S_2$ to \eqref{onelev}, corresponding to the first and second innermost terms in the expression above.

\subsection{Comparing $S_1$ and $T_1$}\label{firsts1t1sect}
In this section, we show that the sums $S_1$ and $T_1$ cancel each other, up to an acceptable error term. First, using the exact formula \eqref{exactdivct}, we may rewrite the innermost sum of \eqref{T1lastsec} according to
\begin{equation*}
    \sum_\sumstack{\deg_K(\mathcal{B})=b/2}1 = \frac{1}{q-1}\sum_{k=0}^{2g_K} \mathbf{1}_{k\leq b/2} \cdot a_{k,K}\left(q^{b/2+1-k}-1\right),
\end{equation*}
so that $T_1$ splits into two sub-sums corresponding to the two terms above. Using Lemma \ref{polylemma}, we find that
\begin{equation*}
    \sum_{k=0}^{2g_K} \abs{a_{k,K}} \ll q^{g_K}q^{o_q(n)},
\end{equation*}
whence a short computation shows that under the assumption $\sigma < 2-3\beta$, the second sub-sum gives only a contribution $o(1)$ to $T_1$, for large enough $q$.

We also use Lemma \ref{polylemma} to remove the indicator function from the first sub-sum. Indeed, 
\begin{equation*}
    \frac{1}{q-1}\sum_{k=0}^{2g_K} \mathbf{1}_{k\leq b/2} \cdot a_{k,K}q^{b/2+1-k} = \frac{1}{q-1}\sum_{k=0}^{2g_K} a_{k,K}q^{b/2+1-k}-\frac{1}{q-1}\sum_{b/2<k\leq 2g_K} a_{k,K}q^{b/2+1-k},
\end{equation*}
and the second sum is $\ll q^{g_K}q^{o_q(n)}$, similar to the sum estimated above, whence
\begin{equation*}
    T_1 \sim \frac{4q}{N\#\tilde{\mathcal{F}}'_{\alpha,\beta}(X)(q-1)}\sum_\sumstack{m\geq 1\\ m \text{ even}}q^{m/2}\widehat{\psi}\left(\frac{m}{N}\right)\sum_\sumstack{j\geq 1\\ X^{\alpha}<q^{2j}\leq X^{\beta}} \sum_\sumstack{[K:\fq(T)]=2\\\abs{\Disc(K)}=q^{2j}}\sum_\sumstack{2a+b=2r\\ a \leq \delta r}\sigma_K(a)\sum_{k=0}^{2g_K}a_{k,K}q^{b/2-k}.
\end{equation*}

On the other hand, 
\begin{equation*}
    S_1 = - \frac{4q}{N\#\tilde{\mathcal{F}}'_{\alpha,\beta}(X)(q-1)}\sum_\sumstack{m=1\\ \text{$m$ even}}^\infty q^{m/2}\widehat{\psi}\left(\frac{m}{N}\right)\sum_\sumstack{j\geq 1\\ X^{\alpha}<q^{2j}\leq X^{\beta}} \sum_\sumstack{[K:\fq(T)]=2\\\abs{\Disc(K)}=q^{2j}}\sum_\sumstack{2a+b=2r\\ b/2 > r-\delta r}\sigma_K(a)\sum_{k=0}^{2g_K}a_{k,K}q^{b/2-k}\mathbf{1}_{k\leq g_K-1+m/2-b/2},
\end{equation*}
so that, after also interchanging the order of summation, we have that
\begin{equation}\label{t1s1sumexpr}
\begin{split}
    T_1+S_1 \sim \frac{4q}{N\#\tilde{\mathcal{F}}'_{\alpha,\beta}(X)(q-1)}&\times\sum_\sumstack{m\geq 1\\ m \text{ even}}q^{m/2}\widehat{\psi}\left(\frac{m}{N}\right)\\&\sum_\sumstack{j\geq 1\\ X^{\alpha}<q^{2j}\leq X^{\beta}} \sum_\sumstack{[K:\fq(T)]=2\\\abs{\Disc(K)}=q^{2j}}\sum_{k=0}^{2g_K}a_{k,K}q^{-k}\sum_\sumstack{a+b/2=r\\ b/2 > r-\delta r\\ b/2 > g_K-1+m/2-k}\sigma_K(a)q^{b/2}.
\end{split}
\end{equation}
We may drop the second condition in the sum over $b$, for the contribution when the second condition is stronger than the third condition is power-saving. Indeed, in this case $m/2\leq r(1-\delta) + k-g_K+1$, and as $q^{-k}a_{k,K} \ll q^{-k/2}q^{o_q(n)}$, a computation shows that this gives a power-saving contribution.

It remains to bound
\begin{equation}\label{firstbd}
    \frac{4q}{N\#\tilde{\mathcal{F}}'_{\alpha,\beta}(X)(q-1)}\sum_\sumstack{m\geq 1\\ m \text{ even}}q^{m/2}\widehat{\psi}\left(\frac{m}{N}\right)\sum_\sumstack{j\geq 1\\ X^{\alpha}<q^{2j}\leq X^{\beta}} \sum_\sumstack{[K:\fq(T)]=2\\\abs{\Disc(K)}=q^{2j}}\sum_{k=0}^{2g_K}a_{k,K}q^{-k}\sum_\sumstack{a+b/2=r\\  b/2 \geq  g_K+m/2-k}\sigma_K(a)q^{b/2}.
\end{equation}
The innermost sum is the $r$th coefficient of
\begin{equation*}
    \frac{1}{\zeta_K(u)}\sum_{\ell=g_K+m/2-k}^\infty q^\ell u^\ell = \frac{(qu)^{g_K+m/2-k}(1-u)}{P_K(u)}, 
\end{equation*}
which we may write as
\begin{equation}\label{coeffint}
    \frac{1}{2\pi i}\int_{\abs{u}=q^{-3}}\frac{q^{g_K+m/2}(qu)^{-k}(1-u)}{u^{r+1-g_K-m/2}P_K(u)}du.
\end{equation}
Summing this over $k$ against $a_{k,K}q^{-k}$ yields
\begin{equation*}
    \frac{1}{2\pi i}\int_{\abs{u}=q^{-3}}\frac{q^{g_K+m/2}P_K(1/q^2u)(1-u)}{u^{r+1-g_K-m/2}P_K(u)}du.
\end{equation*}
By the functional equation for $P_K$, we have that
\begin{equation}\label{appliedfunceq}
 P_K(1/q^2u)= q^{-3g_K}u^{-2g_K}P_K(qu),   
\end{equation}
whence the above equals
\begin{equation*}
    \frac{1}{2\pi i}\int_{\abs{u}=q^{-3}}\frac{q^{m/2-2g_K}(1-u)}{u^{r+1+g_K-m/2}}\cdot \frac{P_K(qu)}{P_K(u)}du.
\end{equation*}
We now sum the above over $m$, writing $m=2m'$, to conclude that the expression \eqref{firstbd} equals
\begin{equation}\label{tosumm}
    \frac{4q}{N\#\tilde{\mathcal{F}}'_{\alpha,\beta}(X)(q-1)}\sum_\sumstack{j\geq 1\\ X^{\alpha}<q^{2j}\leq X^{\beta}} \sum_\sumstack{[K:\fq(T)]=2\\\abs{\Disc(K)}=q^{2j}}\frac{1}{2\pi i}\int_{\abs{u}=q^{-3}}\frac{q^{-2g_K}(1-u)}{u^{r+1+g_K}}\cdot \frac{P_K(qu)}{P_K(u)}\sum_{m'=1}^\infty (q^2u)^{m'}\widehat{\psi}\left(\frac{2m'}{N}\right) du.
\end{equation}

From the definition of the Fourier transform, we may write
\begin{equation*}
    \sum_{m'=1}^\infty (q^2u)^{m'}\widehat{\psi}\left(\frac{2m'}{N}\right) = \sum_{m'=1}^\infty (q^2u)^{m'}\int_{-\infty}^\infty \psi(x)e^{4\pi i x m'/N}dx = \int_{-\infty}^\infty \psi(x)\frac{q^2 u e^{4\pi i x/N}}{1-q^2ue^{4\pi i x/N}}dx,
\end{equation*}
as $\abs{u} = q^{-3}$, so that the sum converges absolutely. Hence, we may rewrite \eqref{tosumm} as
\begin{equation}\label{toshift}
    \frac{4q^3}{2\pi iN\#\tilde{\mathcal{F}}_{\alpha,\beta}(X)(q-1)}\int_{-\infty}^\infty \psi(x)\sum_\sumstack{j\geq 1\\ X^{\alpha}<q^{2j}\leq X^{\beta}} \sum_\sumstack{[K:\fq(T)]=2\\\abs{\Disc(K)}=q^{2j}}\int_{\abs{u}=q^{-3}}\frac{q^{-2g_K}(1-u)}{u^{r+g_K}}\cdot \frac{P_K(qu)}{P_K(u)}\cdot \frac{e^{4\pi i x/N}}{1-q^2ue^{4\pi i x/N}}dudx.
\end{equation}
We note that the $u$-integrand has a simple pole at one point on the circle $\abs{u}=q^{-2}$. We shift the integral to the line $\abs{u}=q^{-1}$. Then, the shifted integral is negligible. Indeed, from the product decomposition, we see that on this circle,
\begin{equation*}
    \frac{P_K(qu)}{P_K(u)}\ll q^{o_q(n)}q^{g_K},
\end{equation*}
so that the expression \eqref{toshift}, with the condition on $u$ replaced with $\abs{u}=q^{-1}$, is
\begin{equation*}
    \ll q^{o_q(n)}q^{-2n}\sum_\sumstack{j\geq 1\\ X^{\alpha}<q^{2j}\leq X^{\beta}} q^{r+2j} \ll q^{o_q(n)}\sum_\sumstack{j\geq 1\\ X^{\alpha}<q^{2j}\leq X^{\beta}} q^{j-n},
\end{equation*}
which is power-saving as $\beta < 1/2$.

When shifting, we also pick up a residue from the pole at some point $u_0$. Recall that $u_0$ depends on $x$ and that $\abs{u_0}=q^{-2}$. Hence, using Theorem \ref{thmfieldctall} and Proposition \ref{reffieldct}, we see that the contribution from the residue is 
\begin{equation*}
    \ll \frac{1}{N^2q^{2n}(\beta-\alpha)}\sum_\sumstack{j\geq 1\\ X^{\alpha}<q^{2j}\leq X^{\beta}} q^{2r} \max_{\abs{u_0} = q^{-2}}\bigg\lvert\sum_\sumstack{[K:\fq(T)]=2\\\abs{\Disc(K)}=q^{2j}}\frac{P_K(qu_0)}{P_K(u_0)}\bigg\rvert \ll \frac{1}{N}\sum_\sumstack{j\geq 1\\ X^{\alpha}<q^{2j}\leq X^{\beta}}  \max_{\abs{u_0} = q^{-2}}\bigg\lvert q^{-2j}\sum_\sumstack{[K:\fq(T)]=2\\\abs{\Disc(K)}=q^{2j}}\frac{P_K(qu_0)}{P_K(u_0)}\bigg\rvert.
\end{equation*}
We see that this is $o(1)$ if we can prove that
\begin{equation}\label{unifK}
    \max_{\abs{u_0} = q^{-2}}\bigg\lvert q^{-2j}\sum_\sumstack{[K:\fq(T)]=2\\\abs{\Disc(K)}=q^{2j}}\frac{P_K(qu_0)}{P_K(u_0)}\bigg\rvert \ll 1.
\end{equation}
This is studied using similar methods as in Section \ref{Kaversect}. We describe the process very briefly. Let us first define $u_0 =: q^{-2-ix_0}$. Using that $2g_K=2j-2$, one writes
\begin{equation*}
    \sum_\sumstack{[K:\fq(T)]=2\\\abs{\Disc(K)}=q^{2j}}\frac{P_K(qu_0)}{P_K(u_0)} = \sum_{\deg(A)\leq 2j-2}\sum_{B}\frac{\mu(B)}{\abs{A}^{1+ix_0}\abs{B}^{2+2ix_0}}\sum_\sumstack{[K:\fq(T)]=2\\\abs{\Disc(K)}=q^{2j}}\chi_K(AB),
\end{equation*}
and then uses Lemma \ref{KavLemma} to bound the innermost sum. As $\alpha \geq \eta_0>0$, the contribution from non-square $AB$ is now power-saving. The remaining $A,B$ provide a contribution that is $\ll 1$. This concludes our proof that $S_1+T_1 = o(1)$.
\subsection{Extracting the phase transition}
We now turn to the study of $S_2$. In particular, we will prove that it is this term that is responsible for the phase transition in the one-level density.

Recall that we defined
\begin{equation*}
    S_2 =  \frac{4q}{N\#\tilde{\mathcal{F}}'_{\alpha,\beta}(X)(q-1)}\sum_\sumstack{m=1\\ \text{$m$ even}}^\infty \widehat{\psi}\left(\frac{m}{N}\right)\sum_\sumstack{j\geq 1\\ X^{\alpha}<q^{2j}\leq X^{\beta}} \sum_\sumstack{[K:\fq(T)]=2\\\abs{\Disc(K)}=q^{2j}}\sum_\sumstack{2a+b=2r\\ b/2 \geq  r-\delta r}\sigma_K(a)\sum_{k=0}^{2g_K}a_{k,K}q^{b-g_K}\mathbf{1}_{k\leq g_K-1+m/2-b/2}.
\end{equation*}
We first study the two innermost sums using contour integration. Interchanging the order of summation yields
\begin{equation}\label{twoinnersum}
    \sum_{k=0}^{2g_K}a_{k,K}q^{-g_K}\sum_\sumstack{a+b/2=r\\ b/2 > r-\delta r\\ b/2\leq g_K-1+m/2-k}\sigma_K(a)q^{b}.
\end{equation}
Similar to before, the second condition in the sum above may be dropped, in which case the innermost sum above is the $r$th coefficient of 
\begin{equation*}
    \frac{1}{\zeta_K(u)}\sum_{\ell=0}^{g_K-1+m/2-k}q^{2\ell}u^\ell = \frac{(1-u)(1-qu)(1-q^{2g_K+m-2k}u^{g_K+m/2-k})}{P_K(u)(1-q^2u)}.
\end{equation*}
Hence, \eqref{twoinnersum} equals
\begin{equation}\label{S2innersum}
    \sum_{k=0}^{2g_K}a_{k,K}q^{-g_K}\cdot \frac{1}{2\pi i}\left(\int_{\abs{u}=q^{-3}}\frac{(1-u)(1-qu)(1-q^{2g_K+m-2k}u^{g_K+m/2-k})}{u^{r+1}P_K(u)(1-q^2u)}du\right).
\end{equation}
We shift the integral to the circle $\abs{u}=q^{-1}$. Note that there is no pole at $u=q^{-2}$ as the numerator has a zero at this point. We then split the integral into two parts
\begin{equation*}
    \sum_{k=0}^{2g_K}a_{k,K}q^{-g_K}\cdot \frac{1}{2\pi i}\left(\int_{\abs{u}=q^{-1}}\frac{(1-u)(1-qu)}{u^{r+1}P_K(u)(1-q^2u)}du-\int_{\abs{u}=q^{-1}}\frac{(1-u)(1-qu)q^{2g_K+m-2k}}{u^{r-g_K-m/2+k+1}P_K(u)(1-q^2u)}du\right).
\end{equation*}
Note that the first of the two terms above gives a power-saving contribution to $S_2$ and can be disregarded. 

To handle the second term above we split into two cases depending on whether $m\leq 2n$ or not. We may handle the case $m\leq 2n$ very similarly to how we bounded $S_1+T_1$. This provides only a term $o(1)$ to $S_2$ and we omit the details. What is more interesting is studying the contribution from $m > 2n$.

We first move the sum over $k$ inside the integral, obtaining
\begin{equation*}
    -\frac{1}{2\pi i}\int_{\abs{u}=q^{-1}}\frac{(1-u)(1-qu)q^{g_K+m}P_K(1/q^2u)}{u^{r-g_K-m/2+1}P_K(u)(1-q^2u)}du = -\frac{1}{2\pi i}\int_{\abs{u}=q^{-1}}\frac{(1-u)(1-qu)q^{-2g_K+m}P_K(qu)}{u^{r+g_K-m/2+1}P_K(u)(1-q^2u)}du.
\end{equation*}
Recall that $r+g_K = (n-j)+g_K = n-1 < n$. Hence, if $m > 2n$, the integrand does not have any pole at $u=0$. We may therefore shift the integral to $\abs{u} =q^{-3}$ and then use Cauchy's formula to conclude that the shifted integral equals zero. When shifting, we pick up a residue at $u=q^{-2}$ so that
\begin{equation*}
    -\frac{1}{2\pi i}\int_{\abs{u}=q^{-1}}\frac{(1-u)(1-qu)q^{-2g_K+m}P_K(qu)}{u^{r+g_K-m/2+1}P_K(u)(1-q^2u)}du = q^{2r}\cdot \frac{(1-q^{-2})(1-q^{-1})P_K(q^{-1})}{P_K(q^{-2})}.
\end{equation*}
Recalling that $2r=2n-2j$ and that $X=q^{2n}$, we have shown that
\begin{equation*}
    S_2 \sim \frac{4X(1-q^{-2})}{N\#\tilde{\mathcal{F}}'_{\alpha,\beta}(X)}\sum_\sumstack{m> 2n\\ \text{$m$ even}}^\infty \widehat{\psi}\left(\frac{m}{N}\right)\sum_\sumstack{j\geq 1\\ X^{\alpha}<q^{2j}\leq X^{\beta}} \sum_\sumstack{[K:\fq(T)]=2\\\abs{\Disc(K)}=q^{2j}}q^{-2j}\frac{P_K(q^{-1})}{P_K(q^{-2})}.
\end{equation*}
Recalling that $\tilde{\mathcal{F}}'_{\alpha,\beta}(X)\sim 2\tilde{\mathcal{F}}_{\alpha,\beta}(X)$, and comparing the above to the expression \eqref{sumnoflip} obtained when estimating $\tilde{\mathcal{F}}_{\alpha,\beta}(X)$, we see that
\begin{equation}\label{phasetrans}
    S_2\sim \frac{2}{N}\sum_\sumstack{m> 2n\\ \text{$m$ even}}^\infty \widehat{\psi}\left(\frac{m}{N}\right) = \frac{2}{N}\sum_\sumstack{m > 0\\ \text{$m$ even}}^\infty \widehat{\psi}\left(\frac{m}{N}\right)-\frac{2}{N}\sum_\sumstack{1\leq m\leq  2n\\ \text{$m$ even}} \widehat{\psi}\left(\frac{m}{N}\right) \sim \frac{\psi(0)}{2}-\frac{2}{N}\sum_\sumstack{1\leq m\leq  2n\\ \text{$m$ even}}\widehat{\psi}\left(\frac{m}{N}\right)\sim \frac{\psi(0)}{2}-\frac{1}{2}\int_{-1}^1 \widehat{\psi}(u)du,
\end{equation}
where the last step was Riemann summation and the fact that $2n/N \to 1$ as $n$ (and thus also $N$) tends to infinity. We have proven the following theorem.
\begin{prop}\label{smallalphabetaonelevdensthm}
    Let $\eta_0 >0$ be fixed, and let $\eta_0 < \alpha < \beta <1/2-\eta_0$. Then, if $\psi$ is an even Schwartz function whose Fourier transform is supported in $(-\sigma,\sigma)$, with $\sigma < 2-3\beta-o_q(1)$, we have that the one-level density
    \begin{equation*}
        \frac{1}{\#\mathcal{F}_{\alpha,\beta}(X)}\sum_{(L,K)\in\mathcal{F}_{\alpha,\beta}(X)}D_{L/K}(\psi)\sim \widehat{\psi}(0)-\frac{1}{2}\int_{-1}^1\widehat{\psi}(u)du,
    \end{equation*}
    consistent with the symplectic Katz--Sarnak prediction. The expression $D_{L/K}(\psi)$ is defined in \eqref{dlkdef}.
\end{prop}
A very slight extension of our results in this chapter, to allow for $\alpha =0$ proves Proposition \ref{quadprop}, see also Section \ref{nonvanishsect}.

\section{The one-level density: fields with large subfields}\label{largesubfldsect}
The goal of this section is to prove an analogue of Proposition \ref{smallalphabetaonelevdensthm}, where $\alpha,\beta > 1/2$. Specifically, we prove the following result.
\begin{prop}\label{bigsubfieldonelevthm}
    Let $\eta_0 >0$ be fixed, and let $1/2+\eta_0 < \alpha < \beta <1-\eta_0$. Then, if $\psi$ is an even Schwartz function whose Fourier transform is supported in $(-\sigma,\sigma)$, with $\sigma < 2-3(1-\alpha)-o_q(1)$, we have that the one-level density
    \begin{equation*}
        \frac{1}{\#\mathcal{F}_{\alpha,\beta}(X)}\sum_{(L,K)\in\mathcal{F}_{\alpha,\beta}(X)}D_{L/K}(\psi)\sim \widehat{\psi}(0)-\frac{1}{2}\int_{-1}^1\widehat{\psi}(u)du,
    \end{equation*}
    consistent with the symplectic Katz--Sarnak prediction. The expression $D_{L/K}(\psi)$ is defined in \eqref{dlkdef}.
\end{prop}
Proving an analogue of Proposition \ref{smallalphabetaonelevdensthm} with $\sigma$ bounded by $2-3\beta$ for $\alpha,\beta > 1/2$ is much simpler. The above theorem improves the support to $2-3(1-\alpha)$, which is significantly larger if $\alpha$ and $\beta$ are larger than $1/2$. 

The proof of Proposition \ref{bigsubfieldonelevthm} relies on utilising the flipped field of a $D_4$-pair $L/K$, similar to how we studied $\#\mathcal{F}_{\alpha,\beta}(X)$ for $\alpha, \beta \geq 1/2$. The starting point is a slightly modified version of \eqref{onelev} proven in the same way. Specifically, we have that the one-level density equals
\begin{equation}\label{flipstart}
    \widehat{\psi}(0)-\frac{\psi(0)}{2} - \frac{2}{N\#\tilde{\mathcal{F}}_{\alpha,\beta}(X)}\sum_{m=1}^\infty q^{-m/2}m\widehat{\psi}\left(\frac{m}{N}\right)\sum_{(L,K)\in \mathcal{F}_{\alpha,\beta}(X)}\sum_{\deg_K\mathcal{P}= m}\chi_{L/K}(\mathcal{P}) + o(1),
\end{equation}
whence our goal is to study the sum above. Similarly to before, we may also restrict the summation over $m$ to even positive integers.

\subsection{Splitting behaviour in the flipped field}
We would like to replace the summation over $(L,K)$ in \eqref{flipstart}, with the summation over their flipped fields $(L',K')$. In order to accomplish this, we need to relate splitting behaviours of primes over $L$ to splitting behaviour over $L'$. First, we rewrite the two innermost sums in \eqref{flipstart} according to
\begin{equation*}
    \sum_{(L,K)\in \mathcal{F}_{\alpha,\beta}(X)}\sum_{\deg_K\mathcal{P}= m}\chi_{L/K}(\mathcal{P}) = \sum_{\deg P \in \{m,m/2\}}\sum_{(L,K)\in \mathcal{F}_{\alpha,\beta}(X)}\sum_\sumstack{\mathcal{P}\mid P\\\deg_K\mathcal{P}= m}\chi_{L/K}(\mathcal{P}),
\end{equation*}
where $P$ ranges over the primes of $\fq(T)$. We are now interested in comparing the behaviour of
\begin{equation*}
    \sum_\sumstack{\mathcal{P}\mid P\\\deg_K\mathcal{P}= m}\chi_{L/K}(\mathcal{P})
\end{equation*}
to that of
\begin{equation*}
    \sum_\sumstack{\mathcal{P}\mid P\\\deg_{K'}\mathcal{P}= m}\chi_{L'/K'}(\mathcal{P}).
\end{equation*}
Here $\mathcal{P}$ ranges over primes in $K$ lying over $P$ in the first sum, and over primes in $K'$ lying over $P$ in the second sum. As before, the contribution from primes which are ramified in $L$ (or $L'$) is negligible. Moreover, the contribution from primes $P$ which are ramified in $K$ or $K'$ is negligible as well. 

Having excluded the contribution from such primes, we may use the first five rows of \cite[Table 1]{ASVW} to conclude that
\begin{equation}\label{flipeq}
\begin{split}
    \sum_{\deg P \in \{m,m/2\}}\sum_\sumstack{\mathcal{P}\mid P\\\deg_K\mathcal{P}= m}\chi_{L/K}(\mathcal{P}) &= \sum_{\deg P \in \{m,m/2\}}\sum_\sumstack{\mathcal{P}\mid P\\\deg_{K'}\mathcal{P}= m}\chi_{L'/K'} (\mathcal{P})  \\&+\sum_{\deg P = m/2}\bigg(\mathbf{1}_{L'/K' \text{ of type }(22,2)\text{ at $P$}}-\mathbf{1}_{L/K \text{ of type }(22,2) \text{ at $P$}}\bigg),
\end{split}
\end{equation}
where the equality is up to a negligible error term. Here, $L/K$ being of type $(22,2)$ at $P$ means that $P$ is inert in $K$ and that the prime lying above $P$ in $K$ splits in $L$. Now, again using \cite[Table 1]{ASVW}, we have that
\begin{equation*}
    \sum_{\deg P =m/2}\bigg(\mathbf{1}_{L'/K' \text{ of type }(22,2)\text{ at $P$}}-\mathbf{1}_{L/K \text{ of type }(22,2) \text{ at $P$}}\bigg) = \sum_{\deg P = m/2 } \left(\mathbf{1}_{L/K \text{ of type }(211,11)\text{ at $P$}}-\mathbf{1}_{L/K \text{ of type }(22,2) \text{ at $P$}}\right).
\end{equation*}
\subsection{Bounding the contribution from rogue splitting types}\label{roguech}
We now bound the contribution to \eqref{flipstart} from the last sum in \eqref{flipeq}, i.e. we study
\begin{equation}\label{fliprogue}
    \frac{2}{N\#\tilde{\mathcal{F}}'_{\alpha,\beta}(X)}\sum_\sumstack{m=1\\ m \text{ even}}^\infty q^{-m/2}m\widehat{\psi}\left(\frac{m}{N}\right)\sum_{\deg P = m/2}\sum_\sumstack{(L,K)\in \tilde{\mathcal{F}}'_{\alpha,\beta}(X)}\left(\mathbf{1}_{L/K \text{ of type }(22,2) \text{ at $P$}}-\mathbf{1}_{L/K \text{ of type }(211,11)\text{ at $P$}}\right).
\end{equation}
When $P$ lying over $\mathcal{P}$ is inert, then excluding the negligible contribution from ramified primes, we see that
\begin{equation}\label{easychar}
    \mathbf{1}_{L/K \text{ of type }(22,2) \text{ at $P$}} = \frac{1}{2}+\frac{1}{2}\chi_{L/K}(\mathcal{P}).
\end{equation}
Similarly, when $P = \mathcal{P}_1\mathcal{P}_2$ is split, we instead have that
\begin{equation}\label{diffchar}
    \mathbf{1}_{L/K \text{ of type }(211,11)\text{ at $P$}} = \frac{1}{2}-\frac{1}{2}\chi_{L/K}(\mathcal{P}_1\mathcal{P}_2).
\end{equation}
\subsubsection{Splitting types in $K$}
We first handle the contribution to \eqref{fliprogue} from the constant terms in \eqref{easychar} and \eqref{diffchar}. For this purpose, we study
\begin{equation*}
    \frac{1}{2}\sum_\sumstack{j\geq 1\\ X^{\alpha} < q^{2j}\leq X^\beta}\sum_\sumstack{[K:\fq(T)]=2\\ \abs{\Disc(K)}=q^{2j} \\ P \text{ inert in $K$}}\sum_\sumstack{[L:K]=2\\\abs{\mathrm{Disc}(L/K)}=X/q^{2j}}1-\frac{1}{2}\sum_\sumstack{j\geq 1\\ X^{\alpha} < q^{2j}\leq X^\beta}\sum_\sumstack{[K:\fq(T)]=2\\ \abs{\Disc(K)}=q^{2j} \\ P \text{ splits in $K$}}\sum_\sumstack{[L:K]=2\\\abs{\mathrm{Disc}(L/K)}=X/q^{2j}}1.
\end{equation*}
We may write this as 
\begin{equation}\label{constrogue}
    \frac{1}{2}\sum_\sumstack{j\geq 1\\ X^{\alpha} < q^{2j}\leq X^\beta}\sum_\sumstack{[K:\fq(T)]=2\\ \abs{\Disc(K)}=q^{2j} }\sum_\sumstack{[L:K]=2\\\abs{\mathrm{Disc}(L/K)}=X/q^{2j}}\chi_K(P).
\end{equation}
When $m$ is large, say $m/2 \geq g_K/10$, we sum the character $\chi_K(P)$ over $\deg P = m/2$. Specifically, we have that
\begin{equation*}
    \sum_{\deg P =m/2}\chi_K(P) \ll q^{m/4}q^{o_q(n)}.
\end{equation*}
As $\alpha > 1/2$ so that $g_K \geq n/2$, this provides a power-saving contribution to \eqref{fliprogue}.

When $m/2 < g_K/10$, we instead use Lemma \ref{quadLemma}, with $\mathcal{B} = 1$, to study \eqref{constrogue}. Up to a power-saving error that contributes $o(1)$ to \eqref{fliprogue}, \eqref{constrogue} is then bounded as
\begin{equation*}
    \ll q^{2n}\bigg \lvert  \sum_\sumstack{j\geq 1\\ X^{\alpha} < q^{2j}\leq X^\beta}q^{-2j}\sum_\sumstack{[K:\fq(T)]=2\\ \abs{\Disc(K)}=q^{2j} }    \frac{P_K(q^{-1})}{P_K(q^{-2})}\chi_K(P)  \bigg \rvert.
\end{equation*}
Now,
\begin{equation*}
    q^{-2j}\sum_\sumstack{[K:\fq(T)]=2\\ \abs{\Disc(K)}=q^{2j} }    \frac{P_K(q^{-1})}{P_K(q^{-2})}\chi_K(P) = q^{-2j}\sum_\sumstack{\deg(A)\leq 2j-2}\sum_{B}\frac{\mu(B)}{\abs{A}\abs{B}^2}\sum_\sumstack{[K:\fq(T)]=2\\ \abs{\Disc(K)}=q^{2j} } \chi_K(ABP).
\end{equation*}
As $\alpha > 0$, the contribution from $ABP$ non-square is power-saving. Hence, using Lemma \ref{KavLemma}, we need only consider
\begin{equation*}
    \sum_\sumstack{\deg(A)\leq 2j-2 \\ABP \text{ square}\\B}\frac{\mu(B)}{\abs{A}\abs{B}^2}\prod_{Q\mid ABP}\left(1+\frac{1}{\abs{Q}}\right)^{-1}.
\end{equation*}
We bound the tail sum by noting that
\begin{equation*}
    \sum_\sumstack{\deg(A) > 2j-2 \\ABP \text{ square}\\B}\frac{\mu^2(B)}{\abs{A}\abs{B}^2}\prod_{Q\mid AP}\left(1+\frac{1}{\abs{Q}}\right)^{-1} \ll \abs{P}^2\sum_\sumstack{\deg(A) > 2j-2 \\AB \text{ square}\\B}\frac{\mu^2(B)}{\abs{A}\abs{B}^2} \ll q^{m-j},
\end{equation*}
which is power-saving.

Finally, we compute that
\begin{equation*}
    \sum_\sumstack{A,B \\ABP \text{ square}}\frac{\mu(B)}{\abs{A}\abs{B}^2}\prod_{Q\mid ABP}\left(1+\frac{1}{\abs{Q}}\right)^{-1} = \left(1+\frac{1}{\abs{P}}\right)^{-1}\sum_\sumstack{A,B \\ABP \text{ square}}\frac{\mu(B)}{\abs{A}\abs{B}^2}\prod_\sumstack{Q\mid AB\\ Q\nmid P}\left(1+\frac{1}{\abs{Q}}\right)^{-1}.
\end{equation*}
Using multiplicativity, this equals
\begin{equation*}
\begin{split}
    \left(1+\frac{1}{\abs{P}}\right)^{-1}&\left(\sum_{\ell=0}^\infty \frac{1}{\abs{P}^{2\ell+1}}-\sum_{\ell=0}^\infty \frac{1}{\abs{P}^{2\ell}\abs{P}^2}\right)\prod_{Q\nmid P}\left(1+\left(1+\frac{1}{\abs{Q}}\right)^{-1}\left(-\sum_{\ell=0}^\infty \frac{1}{\abs{Q}^{2\ell+1}\abs{Q}^2}+\sum_{\ell=1}^\infty \frac{1}{\abs{Q}^{2\ell}}\right)\right)
    \\& \ll \abs{P}^{-1},
\end{split}
\end{equation*}
which in total gives a contribution $\mathcal{O}\left(N^{-1}\right)$ to \eqref{constrogue}.

\subsubsection{Averages of characters}

We now study the contribution to \eqref{fliprogue} from the characters in \eqref{easychar} and \eqref{diffchar}. The character $\chi_{L/K}(\mathcal{P})$ is easiest to handle. Indeed, we can proceed as in Section \ref{classgpsect} to write, up to a negligible error,
\begin{equation*}
    \sum_\sumstack{(L,K)\in \tilde{\mathcal{F}}'_{\alpha,\beta}(X)\\ P \text{ inert in $K$}}\chi_{L/K}(\mathcal{P}) = \frac{1}{2}\sum_\sumstack{[K:\fq(T)]=2\\P \text{ inert in $K$} \\ \abs{\Disc(K)}=q^{2j}\\X^\alpha \leq q^{2j}< X^{\beta}}\abs{S(K)}\mathbf{1}_{S(K) = S_\mathcal{P}(K)}\sum_\sumstack{\chi \in \widehat{\mathrm{Cl}_\mathcal{P}^0(K)}[2]\\ \chi \text{ primitive}}\sum_\sumstack{\mathcal{A} \text{ squarefree} \\ \mathcal{A}\in \mathrm{Cl}(K)^2 \\ \deg_K(\mathcal{A}) = 2n-2j }\chi(\mathcal{A}).
\end{equation*}
Here, we may use contour integration similar to the methods used in Section \ref{countfields} to bound the innermost sum by $\ll q^{n-j}q^{o_q(n)}$. Together with \eqref{primeidealct}, this proves that the contribution from the sum over the character $\chi_{L/K}(\mathcal{P})$ to \eqref{fliprogue} is $o(1)$.

Next, we wish to bound
\begin{equation}\label{doubleprime}
    \sum_\sumstack{(L,K)\in \tilde{\mathcal{F}}'_{\alpha,\beta}(X)\\ P \text{ splits in $K$}}\chi_{L/K}(\mathcal{P}_1\mathcal{P}_2).
\end{equation}
Up to a negligible error term, this is
\begin{equation*}
    \frac{1}{2}\sum_\sumstack{[K:\fq(T)]=2\\P \text{ splits in $K$} \\ X^\alpha \leq q^{2j}< X^{\beta}\\ \abs{\Disc(K)}=q^{2j}}\sum_\sumstack{\mathcal{A} \text{ squarefree} \\ \mathcal{A}\in \mathrm{Cl}(K)^2 \\ \deg_K(\mathcal{A}) = 2n-2j }\sum_{u\in S(K)}\chi_{L(\mathcal{A},u)/K}(\mathcal{P}_1\mathcal{P}_2).
\end{equation*}
The innermost sum equals zero, unless $S_{\mathcal{P}_1}(K) = S_{\mathcal{P}_2}(K)$, so we may assume that this is the case. The above, then equals
\begin{equation}
    \frac{\abs{S(K)}}{2}\sum_\sumstack{[K:\fq(T)]=2\\P \text{ splits in $K$} \\ X^\alpha \leq q^{2j}< X^{\beta}\\ \abs{\Disc(K)}=q^{2j}}\mathbf{1}_{S_{\mathcal{P}_1}(K) = S_{\mathcal{P}_2}(K)}\sum_\sumstack{\mathcal{A} \text{ squarefree} \\ \mathcal{A}\in \mathrm{Cl}(K)^2 \\ \deg_K(\mathcal{A}) = 2n-2j }\chi_{L(\mathcal{A})/K}(\mathcal{P}_1\mathcal{P}_2).
\end{equation}

Recall that $\chi_{L(\mathcal{A})/K}(\mathcal{P}_1\mathcal{P}_2)$ equals $1$ if both $\mathcal{P}_1$ and $\mathcal{P}_2$ have the same splitting type in $L$ (excluding the ramified cases as usual), and else equals $-1$. Now, using Lemma \ref{splitlemma}, we see that $\mathcal{A}$ being a square in $\mathrm{Cl}_{\mathcal{P}_1\mathcal{P}_2}$ is equivalent to there existing an extension of discriminant $\mathcal{A}$, such that both $\mathcal{P}_i$ splits here, whence $\chi_{L(\mathcal{A})/K}(\mathcal{P}_1\mathcal{P}_2) = 1$.

If we instead assume that $\mathcal{A}$ is not a square in $\mathrm{Cl}_{\mathcal{P}_1\mathcal{P}_2}$, then there is no extension where both primes split at the same time, and we then have three subcases. The first of these subcases is when $\mathcal{A}$ is a square in both $\mathrm{Cl}_{\mathcal{P}_i}$. Then $\mathcal{P}_1$ splits in half the extensions associated to $\mathcal{A}$, and $\mathcal{P}_2$ splits in the other half, so that $\chi_{L(\mathcal{A})/K}(\mathcal{P}_1\mathcal{P}_2) = -1$. Moreover, this also implies that $\abs{S_{\mathcal{P}_1}(K)}=\abs{S_{\mathcal{P}_2}(K)} = \abs{S(K)}/2$.

The second subcase is when $\mathcal{A}$ is not a square in either of the $\mathrm{Cl}_{\mathcal{P}_i}$. Then, both $\mathcal{P}_i$ are always inert, and $\chi_{L(\mathcal{A})/K}(\mathcal{P}_1\mathcal{P}_2) = 1$. Hence, we also have that $S_{\mathcal{P}_1}(K) = S_{\mathcal{P}_2}(K) = S(K)$. Finally, the last subcase is when $\mathcal{A}$ is a square in precisely one of the $\mathrm{Cl}_{\mathcal{P}_i}$. As above, we must have that $S_{\mathcal{P}_1}(K) = S_{\mathcal{P}_2}(K) = S(K)$, but now $\chi_{L(\mathcal{A})/K}(\mathcal{P}_1\mathcal{P}_2) = -1$, as one of the primes always splits, while the other one is always inert.

We now claim that
\begin{equation}\label{eqclaim}
    \mathbf{1}_{\mathcal{A}\in (\mathrm{Cl}^0(K))^2}\cdot\chi_{L(\mathcal{A})/K}(\mathcal{P}_1\mathcal{P}_2) = \frac{1}{\abs{\mathrm{Cl}^0(K)/(\mathrm{Cl}^0(K))^2}}\sum_\sumstack{\chi \in \widehat{\mathrm{Cl}_{\mathcal{P}_1\mathcal{P}_2}^0}(K)[2]\\ \chi \text{ primitive}}\chi(\mathcal{A}).
\end{equation}
Indeed, as every character is induced from a unique modulus, this sum equals
\begin{equation}\label{primsumcand}
    \frac{1}{\abs{\mathrm{Cl}^0(K)/(\mathrm{Cl}^0(K))^2}}\bigg(\sum_\sumstack{\chi \in \widehat{\mathrm{Cl}_{\mathcal{P}_1\mathcal{P}_2}^0}(K)[2]}\chi(\mathcal{A})-\sum_\sumstack{\chi \in \widehat{\mathrm{Cl}_{\mathcal{P}_1}^0}(K)[2]}\chi(\mathcal{A})-\sum_\sumstack{\chi \in \widehat{\mathrm{Cl}_{\mathcal{P}_2}^0}(K)[2]}\chi(\mathcal{A})+\sum_\sumstack{\chi \in \widehat{\mathrm{Cl}^0}(K)[2]}\chi(\mathcal{A})\bigg).
\end{equation}
Each of the four sums above is just a constant multiplied with the indicator function of $\mathcal{A}$ being a square in each class group. To prove the claim \eqref{eqclaim} we need some additional setup.

Recall that $\abs{\mathrm{Cl}^0(K)/(\mathrm{Cl}^0(K))^2} = \eta(K,\mathcal{P}_i)\abs{\mathrm{Cl}_{\mathcal{P}_i}^0(K)/(\mathrm{Cl}_{\mathcal{P}_i}^0(K))^2}$, where $\eta(K,\mathcal{P}_i)\in \{1,1/2\}$ is determined by $\eta(K,\mathcal{P}_1) = \abs{S(K)}/(2\abs{S_{\mathcal{P}_1}(K)}) = \eta(K,\mathcal{P}_2)$. Furthermore, similar to Lemma \ref{seqlemma}, we have an exact sequence
\begin{equation*}
        1 \to S_{\mathcal{P}_1}(K)\cap S_{\mathcal{P}_2}(K) \to S(K) \to \frac{\mathcal{O}_{\mathcal{P}_1}^*}{(\mathcal{O}_{\mathcal{P}_1}^*)^2}\times \frac{\mathcal{O}_{\mathcal{P}_2}^*}{(\mathcal{O}_{\mathcal{P}_2}^*)^2} \to \frac{\mathrm{Cl}^0_{\mathcal{P}_1\mathcal{P}_2}(K)}{(\mathrm{Cl}^0_{\mathcal{P}_1\mathcal{P}_2}(K))^2} \to \frac{\mathrm{Cl}^0(K)}{(\mathrm{Cl}^0(K))^2} \to 1.
    \end{equation*}
    As $S_{\mathcal{P}_1}(K) = S_{\mathcal{P}_2}(K)$, the intersection is equal to $S_{\mathcal{P}_1}(K)$, say. A diagram chase shows that
    \begin{equation*}
        \abs{\mathrm{Cl}^0(K)/(\mathrm{Cl}^0(K))^2} = \abs{\mathrm{Cl}_{\mathcal{P}_1\mathcal{P}_2}^0(K)/(\mathrm{Cl}_{\mathcal{P}_1\mathcal{P}_2}^0(K))^2}\frac{\abs{S(K)}}{4\abs{S_{\mathcal{P}_1}(K)}} = \abs{\mathrm{Cl}_{\mathcal{P}_1\mathcal{P}_2}^0(K)/(\mathrm{Cl}_{\mathcal{P}_1\mathcal{P}_2}^0(K))^2}\frac{\eta(K,\mathcal{P}_1)}{2}.
    \end{equation*}
    We then see that \eqref{primsumcand} equals
    \begin{equation}\label{explprimsum}
        \frac{2}{\eta(K,\mathcal{P}_1)}\mathbf{1}_{\mathcal{A}\in (\mathrm{Cl}_{\mathcal{P}_1\mathcal{P}_2}^0(K))^2}-\frac{1}{\eta(K,\mathcal{P}_1)}\mathbf{1}_{\mathcal{A}\in (\mathrm{Cl}_{\mathcal{P}_1}^0(K))^2}-\frac{1}{\eta(K,\mathcal{P}_1)}\mathbf{1}_{\mathcal{A}\in (\mathrm{Cl}_{\mathcal{P}_2}^0(K))^2}+\mathbf{1}_{\mathcal{A}\in (\mathrm{Cl}^0(K))^2}.
    \end{equation}
    Using this, we may now compare the expression above to the left-hand side of \eqref{eqclaim}. First, if $\mathcal{A}$ is not a square in the divisor class-group, then both sides are zero and we are done, so let us assume that $\mathcal{A}$ is a square. Next, we use the breakdown into cases and subcases described earlier in the section.

    First, if $\mathcal{A}$ is a square in $\mathrm{Cl}_{\mathcal{P}_1\mathcal{P}_2}$, then we saw earlier that $\chi_{L(\mathcal{A})/K}(\mathcal{P}_1\mathcal{P}_2) =1$. Moreover, evaluating \eqref{explprimsum} also yields $1$. We turn to the three subcases when $\mathcal{A}$ is not a square in $\mathrm{Cl}_{\mathcal{P}_1\mathcal{P}_2}$. In the first subcase when $\mathcal{A}$ was a square in the ray class groups modulo both $\mathcal{P}_i$, we had that $\chi_{L(\mathcal{A})/K}(\mathcal{P}_1\mathcal{P}_2) =-1$ and that $\eta(K,\mathcal{P}_i) = 1$. In this case, \eqref{explprimsum} evaluates to $-1$, as desired.

    In the second subcase, when $\mathcal{A}$ is not a square in either of the $\mathrm{Cl}_{\mathcal{P}_i}$, we have the equalities $\chi_{L(\mathcal{A})/K}(\mathcal{P}_1\mathcal{P}_2) =1$ and $\eta(K,\mathcal{P}_i)=1/2$. We see that \eqref{explprimsum} evaluates to $1$ as well. In the final case when $\mathcal{A}$ is a square in precisely one of the $\mathrm{Cl}_{\mathcal{P}_{i_0}}$, we also had that $\eta(K,\mathcal{P}_i)=1/2$ and that $\chi_{L(\mathcal{A})/K}(\mathcal{P}_1\mathcal{P}_2) =-1$. Lastly, \eqref{explprimsum} evaluates to $-1$.
    
We have shown that
\begin{equation*}
    \sum_\sumstack{(L,K)\in \tilde{\mathcal{F}'}_{\alpha,\beta}(X)\\ P \text{ splits in $K$}}\chi_{L/K}(\mathcal{P}_1\mathcal{P}_2) = \sum_\sumstack{[K:\fq(T)]=2\\P \text{ splits in $K$} \\ X^\alpha \leq q^{2j}< X^{\beta}\\ \abs{\Disc(K)}=q^{2j}}\mathbf{1}_{S_{\mathcal{P}_1}(K) = S_{\mathcal{P}_2}(K)}\sum_\sumstack{\chi \in \widehat{\mathrm{Cl}_{\mathcal{P}_1\mathcal{P}_2}^0}(K)[2]\\ \chi \text{ primitive}}\sum_\sumstack{\mathcal{A} \text{ squarefree} \\ \mathcal{A}\in \mathrm{Cl}(K)^2 \\ \deg_K(\mathcal{A}) = 2n-2j }\chi(\mathcal{A}).
\end{equation*}
Using the Riemann hypothesis and contour integration, the innermost sum can be bounded by $\ll q^{n-j+o_q(n)}$, giving a power-saving contribution to \eqref{fliprogue}. This concludes the bounding of the contribution from the rogue splitting types.
\subsection{Character sums in the flipped field}
We now study the contribution from the character sum in the flipped field. Specifically, we study
\begin{equation}\label{lastonelevtot}
    - \frac{2}{N\#\tilde{\mathcal{F}}_{\alpha,\beta}(X)}\sum_{m=1}^\infty q^{-m/2}m\widehat{\psi}\left(\frac{m}{N}\right)\sum_{(L,K)\in \mathcal{F}_{\alpha,\beta}(X)}\sum_{\deg_{K'}\mathcal{P}= m}\chi_{L'/K'}(\mathcal{P}) .
\end{equation}
Similar to how we arrived at \eqref{flipfieldst}, we see that up to a small error coming from non $D_4$-fields, the two innermost sums above equal
\begin{equation}\label{sumfg}
    \frac{1}{2}\sum_{f\in D_{\fq(T)}}\mu^2(f)\sum_\sumstack{g\in D_{\fq(T)}\\ (g,f)=1}\mu(g)\sum_\sumstack{j\geq 1\\  X^{1-\beta}/\abs{f}^2\leq q^{2j} < X^{1-\alpha}/\abs{f}^2}\sum_\sumstack{[K':\fq(T)]=2\\ \abs{\Disc(K')}=q^{2j}  \\ \text{$K'$ unramified at $fg$,}}\sum_{\deg_{K'}\mathcal{P}= m}\sum_\sumstack{[L':K']=2\\ \abs{\mathrm{Disc}(L'/K')}=X/q^{2j}\\ f^2g^2\mid N_{K'/\fq(T)}(\Disc(L'/K'))}\chi_{L'/K'}(\mathcal{P}).
\end{equation}
As before, the innermost sum equals
\begin{equation*}
    \sum_\sumstack{\mathcal{A} \text{ squarefree} \\ \mathcal{A}\in \mathrm{Cl}(K')^2 \\ \deg_{K'}(\mathcal{A}) = 2n-2j\\ fg\mid \mathcal{A} }\sum_{u\in S(K')}\chi_{L(\mathcal{A},u)/K'}(\mathcal{P}),
\end{equation*}
where $fg\mid \mathcal{A}$ means that all primes in $K$ dividing $fg$ also divides $\mathcal{A}$. As in Section \ref{classgpsect}, we need only consider the case when $S_\mathcal{P}(K) = S(K)$, and $m$ even, so that the splitting behaviour does not depend on $u$. We arrive at
\begin{equation*}
    \sum_\sumstack{\mathcal{A} \text{ squarefree} \\ \mathcal{A}\in \mathrm{Cl}(K')^2 \\ \deg_{K'}(\mathcal{A}) = 2n-2j\\ fg\mid \mathcal{A} }\sum_{u\in S(K')}\chi_{L(\mathcal{A},u)/K'}(\mathcal{P}) = 2\cdot \mathbf{1}_{\abs{S(K')}=\abs{S_\mathcal{P}(K')}}\sum_\sumstack{\chi \in \widehat{\mathrm{Cl}_\mathcal{P}^0(K')}[2]\\ \chi \text{ primitive}}\sum_\sumstack{\mathcal{A} \text{ squarefree}  \\ \deg_{K'}(\mathcal{A}) = 2n-2j\\ fg\mid \mathcal{A} }\chi(\mathcal{A}).
\end{equation*}
Separating $fg$, we obtain
\begin{equation}\label{toshift2}
    2\cdot \mathbf{1}_{\abs{S(K')}=\abs{S_\mathcal{P}(K')}}\sum_\sumstack{\chi \in \widehat{\mathrm{Cl}_\mathcal{P}^0(K')}[2]\\ \chi \text{ primitive}}\chi(fg)\sum_\sumstack{\mathcal{A} \text{ squarefree}  \\ \deg_{K'}(\mathcal{A}) = 2n-2j-2\deg(fg)\\ (fg,\mathcal{A}) =1  }\chi(\mathcal{A}),
\end{equation}
where we used that $\deg_{K'}(fg) = 2\deg(fg)$. Using multiplicativity, we can see that the innermost sum (multiplied with $\chi(fg)$ is the $2n-2j-2\deg(fg)$th coefficient of 
\begin{equation}\label{lfcncoeff}
    \chi(fg)\big(1-u^{\deg(\mathcal{P})}\big)^{-1}\prod_{\mathcal{Q}\mid fg}\big(1+\chi(Q)u^{\deg{Q}}\big)^{-1}\frac{L(u,\chi)}{\zeta_K(u^2)}.
\end{equation}

For some of the remaining arguments, it will be convenient to be able to assume that $m$ is $\gg n$. We may indeed assume that this is the case, for if $m\leq cn$, for some small constant $c$, then we may bound the innermost sum of \eqref{toshift2} by contour integration, shifting to e.g. $\abs{u}=q^{-3/4}$. The contribution from such $m$ to \eqref{lastonelevtot} is power-saving. Moreover, this also shows that we may restrict $f,g$ to $\abs{f},\abs{g} < X^{\delta}$, for any small $\delta >0$.

Now, for the remaining $m$, we may in fact ignore the first factor of \eqref{lfcncoeff}, as it only modifies the coefficients by a factor $1+\mathcal{O}\left(\abs{\mathcal{P}}^{-1}\right)$. Therefore, we instead study the coefficients of 
\begin{equation}\label{lfcnnoP}
    \chi(fg)\prod_{\mathcal{Q}\mid fg}\big(1+\chi(Q)u^{\deg{Q}}\big)^{-1}\frac{L(u,\chi)}{\zeta_K(u^2)}.
\end{equation}
The $2n-2j-2\deg(fg)$th coefficient of the expression above is (with $r=n-j$), up to a negligible error, equal to
\begin{equation*}
    \chi(fg)\sum_{v=0}^{2r-2\deg(fg)}\sum_\sumstack{\deg_{K'}(\mathcal{C})=v\\ \mathcal{C}\mid (fg)^\infty}\mu(\mathrm{sqf}(\mathcal{C}))\chi(\mathcal{C})\sum_{2a+b+v = 2r-2\deg(fg)}\sigma_{K'}(a)\sum_{\deg_{K'}(\mathcal{B})=b}\chi(\mathcal{B}).
\end{equation*}
At this point, we may add back the $m\leq cn$ to the sum over $m$. Similar to when we studied the one-level density for fields with small subfields, we split the summation depending on whether $\alpha \leq \delta r$ or not.
\subsubsection{Splitting the summation}
We now split the four innermost sums from \eqref{sumfg} into two parts $S$ and $T$. Let us write, cf \eqref{TsumtoPave},
\begin{equation}
\begin{split}
    T = 2&\sum_\sumstack{j\geq 1\\  X^{1-\beta}/\abs{f}^2\leq q^{2j} < X^{1-\alpha}/\abs{f}^2}\sum_\sumstack{[K':\fq(T)]=2\\ \abs{\Disc(K')}=q^{2j}  \\ \text{$K'$ unramified at $fg$,}}\\&\times\sum_{u\in S(K')/\fq^*}\sum_{v=0}^{2r-2\deg(fg)}\sum_\sumstack{\deg_{K'}(\mathcal{C})=v\\ \mathcal{C}\mid (fg)^\infty}\mu(\mathrm{sqf}(\mathcal{C}))\sum_\sumstack{2a+b+v=2r-2\deg(fg)\\ a > \delta r}\sigma_{K'}(a)\sum_\sumstack{\deg_{K'}(\mathcal{B})=b\\ \mathcal{B}\in \mathrm{Cl}(K')^2}\sum_{\deg_{K'}(\mathcal{P})=m} \chi_{L(\mathrm{sqf}(\mathcal{BC}fg),u)/K}(\mathcal{P}).
\end{split}
\end{equation}
When the innermost character is nontrivial, we can bound the sum over $\mathcal{P}$ by $\ll q^{m/2}q^{o_q(n)}$. The condition $a > \delta r$ ensures that the total contribution to \eqref{lastonelevtot} is power-saving. The character is trivial when $\mathcal{BC}fg$ is a square, and then only one of the $u$ corresponds to a trivial extension. For fixed $\mathcal{C},f,g$ this means that $\mathcal{B}$ is a square multiplied with the squarefree part of $\mathcal{C}fg \mid (fg)^\infty$.

The trivial contribution to $T$ is, using that $\mathcal{B}$ must then be a square, up to a negligible error term equal to
\begin{equation*}
\begin{split}
    T':= 2q^{m}m^{-1}&\sum_\sumstack{j\geq 1\\  X^{1-\beta}/\abs{f}^2\leq q^{2j} < X^{1-\alpha}/\abs{f}^2}\sum_\sumstack{[K':\fq(T)]=2\\ \abs{\Disc(K')}=q^{2j}  \\ \text{$K'$ unramified at $fg$,}}\\&\times\sum_{v=0}^{2r-2\deg(fg)}\sum_\sumstack{\deg_{K'}(\mathcal{C})=v\\ \mathcal{C}\mid (fg)^\infty}\mu(\mathrm{sqf}(\mathcal{C}))\sum_\sumstack{2a+b+v=2r-2\deg(fg)\\ a > \delta r}\sigma_{K'}(a)\sum_\sumstack{\deg_{K'}(\mathcal{B})=(b-2\deg(fg)+\deg(\mathrm{sqf}(\mathcal{C})))/2}1.
\end{split}
\end{equation*}
To simplify the notation, we write $c_0 := 2\deg(fg)-\deg(\mathrm{sqf}(\mathcal{C}))\geq 0$. Next, we note that if one removes the condition $a > \delta r$, then the three innermost sums become
\begin{equation*}
    \sum_\sumstack{\deg_{K'}(\mathcal{C})=v\\ \mathcal{C}\mid (fg)^\infty}\mu(\mathrm{sqf}(\mathcal{C}))\sum_\sumstack{2a+b=2r-v}\sigma_{K'}(a)\sum_\sumstack{\deg_{K'}(\mathcal{B})=(b-c_0)/2}1 =\sum_\sumstack{\deg_{K'}(\mathcal{C})=v\\ \mathcal{C}\mid (fg)^\infty}\mu(\mathrm{sqf}(\mathcal{C}))\sum_\sumstack{2a=0}^{2r-2\deg(fg)-v}\sigma_{K'}(a)\sum_\sumstack{\deg_{K'}(\mathcal{B})=(2r-v-2a-c_0)/2}1. 
\end{equation*}
We now simply note that if $2a > 2r-v-c_0$, then the innermost sum above is empty. Assuming that this is not the case, the two innermost sums above equal the $r-(v+c_0)/2$th coefficient of the constant polynomial $1$, i.e. $0$, unless $v$ is large, but the case when $v$ is large contributes only a negligible error. We then see that up to a small error $T'$ equals
\begin{equation*}
\begin{split}
    T'':=-2q^{m}m^{-1}&\sum_\sumstack{j\geq 1\\  X^{1-\beta}/\abs{f}^2\leq q^{2j} < X^{1-\alpha}/\abs{f}^2}\sum_\sumstack{[K':\fq(T)]=2\\ \abs{\Disc(K')}=q^{2j}  \\ \text{$K'$ unramified at $fg$,}}\\&\times\sum_{v=0}^{2r-2\deg(fg)}\sum_\sumstack{\deg_{K'}(\mathcal{C})=v\\ \mathcal{C}\mid (fg)^\infty}\mu(\mathrm{sqf}(\mathcal{C}))\sum_\sumstack{2a+b+v=2r-2\deg(fg)\\ a \leq \delta r}\sigma_{K'}(a)\sum_\sumstack{\deg_{K'}(\mathcal{B})=(b-c_0)/2}1.
\end{split}
\end{equation*}

If we apply \eqref{exactdivct}, we find that
\begin{equation}\label{firstdivneglscnd}
    \sum_\sumstack{\deg_{K'}(\mathcal{B})=(b-c_0)/2}1 = \frac{q}{q-1}\sum_{k=0}^{2g_{K'}}a_{k,K}\left(q^{(b-c_0)/2-k}-q^{-1}\right)\mathbf{1}_{\{k\leq (b-c_0)/2\}}.
\end{equation}
As $\sigma < 2-3(1-\alpha)$, the contribution of the second term above to \eqref{lastonelevtot} is power-saving. The remaining contribution of $T''$ to \eqref{lastonelevtot} is
\begin{equation*}
\begin{split}
    T_1 :=  &\frac{2q}{(q-1)N\#\tilde{\mathcal{F}}_{\alpha,\beta}(X)}\sum_\sumstack{m=1\\ \text{$m$ even}}^\infty q^{m/2}\widehat{\psi}\left(\frac{m}{N}\right)\sum_\sumstack{f\in D_{\fq(T)}\\ \abs{f} < X^{\delta}}\mu^2(f)\sum_\sumstack{g\in D_{\fq(T)}\\ (g,f)=1\\ \abs{g} < X^\delta}\mu(g)\sum_\sumstack{j\geq 1\\  X^{1-\beta}/\abs{f}^2\leq q^{2j} < X^{1-\alpha}/\abs{f}^2}\sum_\sumstack{[K':\fq(T)]=2\\ \abs{\Disc(K')}=q^{2j}  \\ \text{$K'$ unramified at $fg$,}}
    \\& \times\sum_{v=0}^{2r-2\deg(fg)}\sum_\sumstack{\deg_{K'}(\mathcal{C})=v\\ \mathcal{C}\mid (fg)^\infty}\mu(\mathrm{sqf}(\mathcal{C}))\sum_\sumstack{2a+b+v=2r-2\deg(fg)\\ a \leq \delta r}\sigma_{K'}(a)\sum_{k=0}^{2g_{K'}}a_{k,K} q^{(b-c_0)/2-k}\mathbf{1}_{\{k\leq (b-c_0)/2\}}.
\end{split}
\end{equation*}
We note that we may remove the indicator function above at the cost of an acceptable error. The error is bounded similarly to the contribution from the secondary term in \eqref{firstdivneglscnd}. 
\subsubsection{Studying $S$}
We now turn to the sum
\begin{equation*}
\begin{split}
    S: = 2&\sum_\sumstack{j\geq 1\\  X^{1-\beta}/\abs{f}^2\leq q^{2j} < X^{1-\alpha}/\abs{f}^2}\sum_\sumstack{[K':\fq(T)]=2\\ \abs{\Disc(K')}=q^{2j}  \\ \text{$K'$ unramified at $fg$,}}\\&\times\sum_{u\in S(K')/\fq^*}\sum_{v=0}^{2r-2\deg(fg)}\sum_\sumstack{\deg_{K'}(\mathcal{C})=v\\ \mathcal{C}\mid (fg)^\infty}\mu(\mathrm{sqf}(\mathcal{C}))\sum_\sumstack{2a+b+v=2r\\ a \leq \delta r}\sigma_{K'}(a)\sum_{\deg_{K'}(\mathcal{P})=m} \sum_\sumstack{\deg_{K'}(\mathcal{B})=b\\ \mathcal{B}\in \mathrm{Cl}(K')^2}\chi_{L(\mathrm{sqf}(\mathcal{BC}fg),u)/K'}(\mathcal{P}).
\end{split}
\end{equation*}
Similar to Section \ref{studyOGS}, we expand into Hecke characters and use the functional equation to rewrite the above into
\begin{equation*}
\begin{split}
    S: = 2&\sum_\sumstack{j\geq 1\\  X^{1-\beta}/\abs{f}^2\leq q^{2j} < X^{1-\alpha}/\abs{f}^2}\sum_\sumstack{[K':\fq(T)]=2\\ \abs{\Disc(K')}=q^{2j}  \\ \text{$K'$ unramified at $fg$,}}\sum_{u\in S(K')/\fq^*}\\&  \times \sum_{v=0}^{2r-2\deg(fg)}\sum_\sumstack{\deg_{K'}(\mathcal{C})=v\\ \mathcal{C}\mid (fg)^\infty}\mu(\mathrm{sqf}(\mathcal{C}))\sum_\sumstack{2a+b+v=2r\\ a \leq \delta r}\sigma_{K'}(a)q^{b-g_{K'}+1-m/2}\sum_\sumstack{\deg_{K'}(\mathcal{B})=2g_{K'}-2+m-b\\ \mathcal{B}\in \mathrm{Cl}(K)^2}\sum_{\deg_{K'}(\mathcal{P})=m} \chi_{L(\mathrm{sqf}(\mathcal{BC}fg),u)/K'}(\mathcal{P}).
\end{split}
\end{equation*}
The contribution from nontrivial $L(\mathrm{sqf}(\mathcal{BC}fg),u)$ to the above is
\begin{equation*}
    \ll \sum_\sumstack{j\geq 1\\  X^{1-\beta}/\abs{f}^2\leq q^{2j} < X^{1-\alpha}/\abs{f}^2}q^{m/2}q^{j+m/2}q^{2j}q^{o_q(n)}\ll q^{m+o_q(n)}\frac{X^{3(1-\alpha)/2}}{\abs{f}^3},
\end{equation*}
which provides a contribution to \eqref{lastonelevtot} of size 
\begin{equation*}
    \ll X^{\delta}X^{\sigma/2-1}X^{3(1-\alpha)/2}X^{o_q(n)},
\end{equation*}
using that $\abs{f},\abs{g}\leq X^{\delta}$. As $\delta > 0$ was arbitrary, the above is power-saving if $\sigma < 2-3(1-\alpha)$, for large enough $q$.

The relevant contribution from trivial fields to $S$ is, cf. \eqref{sprimedef}, equal to
\begin{equation*}
\begin{split}
    S': = 2q^{m}m^{-1}&\sum_\sumstack{j\geq 1\\  X^{1-\beta}/\abs{f}^2\leq q^{2j} < X^{1-\alpha}/\abs{f}^2}\sum_\sumstack{[K':\fq(T)]=2\\ \abs{\Disc(K')}=q^{2j}  \\ \text{$K'$ unramified at $fg$,}}\sum_{u\in S(K')/\fq^*}\\&  \times \sum_{v=0}^{2r-2\deg(fg)}\sum_\sumstack{\deg_{K'}(\mathcal{C})=v\\ \mathcal{C}\mid (fg)^\infty}\mu(\mathrm{sqf}(\mathcal{C}))\sum_\sumstack{2a+b+v=2r\\ a \leq \delta r}\sigma_{K'}(a)q^{b-g_{K'}+1-m/2}\sum_\sumstack{\deg_{K'}(\mathcal{B})=g_{K'}-1+m/2-(b+c_0)/2}1.
\end{split}
\end{equation*}
Now, using \eqref{exactdivct},
\begin{equation*}
    q^{b-g_{K'}+1-m/2}\sum_\sumstack{\deg_{K'}(\mathcal{B})=g_{K'}-1+m/2-(b+c_0)/2}1 = \frac{q}{q-1}\sum_{k=0}^{2g_{K'}}a_{k,K}\left(q^{(b-c_0)/2}-q^{b-g_{K'}-m/2}\right)\mathbf{1}_{k\leq g_{K'}-1+m/2-(b+c_0)/2},
\end{equation*}
so that the contribution of $S'$ to \eqref{lastonelevtot} naturally splits into a sum $S_1+S_2$.
\subsubsection{Comparing $S_1$ and $T_1$}
We now prove that the sum $S_1+T_1$ is negligible. First, we have an analogue of \eqref{t1s1sumexpr}, specifically
\begin{equation}\label{flipanalogue}
\begin{split}
    S_1+T_1 \sim &\frac{2q}{(q-1)N\#\tilde{\mathcal{F}}_{\alpha,\beta}(X)}\sum_\sumstack{m=1\\ \text{$m$ even}}^\infty q^{m/2}\widehat{\psi}\left(\frac{m}{N}\right)\sum_\sumstack{f\in D_{\fq(T)}\\ \abs{f} < X^{\delta}}\mu^2(f)\sum_\sumstack{g\in D_{\fq(T)}\\ (g,f)=1\\ \abs{g} < X^\delta}\mu(g)\sum_\sumstack{j\geq 1\\  X^{1-\beta}/\abs{f}^2\leq q^{2j} < X^{1-\alpha}/\abs{f}^2}\sum_\sumstack{[K':\fq(T)]=2\\ \abs{\Disc(K')}=q^{2j}  \\ \text{$K'$ unramified at $fg$,}}
    \\& \times\sum_{v=0}^{2r-2\deg(fg)}\sum_\sumstack{\deg_{K'}(\mathcal{C})=v\\ \mathcal{C}\mid (fg)^\infty}\mu(\mathrm{sqf}(\mathcal{C}))\sum_{k=0}^{2g_{K'}}a_{k,K}q^{-k}\sum_\sumstack{2a+b+v=2r-2\deg(fg)\\ b/2\geq 2r-2\deg(fg)-v- \delta r \\ (b-c_0)/2 > g_{K'}-1+m/2-k-c_0}\sigma_{K'}(a) q^{(b-c_0)/2}.
\end{split}
\end{equation}
As before, we may disregard the second condition in the innermost sum above, as the contribution from the ranges where the second condition is stricter than the third is negligible. Before removing the second condition one can make the change of variables $b'=b-c_0$, and note that the second condition implies that both $b$ and $b'$ are large, so that we may sum over $b'$ instead. Next, one removes the second condition in the sum.

Recalling that the contribution from large $v$ to the sum above was negligible, we may write $\mathcal{C}=\mathcal{C}_1\mathcal{C}_2^2$, and sum over all $\mathcal{C}_1\mid fg$, $\mathcal{C}_2\mid (fg)^\infty$, in place of the sum over $\mathcal{C}$. Note that $\mathrm{sqf}(\mathcal{C})=\mathcal{C}_1$. 

The innermost sum is then
\begin{equation*}
    \sum_\sumstack{2a+b'=2r-4\deg(fg)-2\deg(\mathcal{C}_2) \\ b'/2 > g_{K'}-1+m/2-k-c_0}\sigma_{K'}(a) q^{b'/2} = \frac{q^{-c_0}}{2\pi i}\int_{\abs{u}=q^{-3}}\frac{q^{g_{K'}+m/2}(qu)^{-k}(1-u)}{u^{r-2\deg(fg)-\deg(\mathcal{C}_2)+1-g_{K'}-m/2+c_0}P_{K'}(u)}du,
\end{equation*}
cf. \eqref{coeffint}. Summing this over $k$ and $m$, against the appropriate factors from \eqref{flipanalogue} yields 
\begin{equation*}
    \frac{q^{2-c_0}}{2\pi i}\int_{-\infty}^\infty \psi(x)\int_{\abs{u}=q^{-3}}\frac{q^{-2g_{K'}}(1-u)P_{K'}(qu)}{u^{r-2\deg(fg)-\deg(\mathcal{C}_2)+g_{K'}+c_0}P_{K'}(u)}\cdot \frac{e^{4\pi i x/N}}{1-q^2ue^{4\pi i x/N}}dudx.
\end{equation*}
We may shift the integral over $u$ to the circle $\abs{u}=q^{-1}$. The shifted integral is negligible, so that we can focus on the contribution from the residue at $u=q^{-2}e^{-4\pi i x/N}$. The total contribution from the residue to \eqref{flipanalogue} is
\begin{equation}\label{finals1t1contrib}
\begin{split}
    \ll \frac{1}{Nq^{2n}}&\sum_{f\in D_{\fq(T)}}\mu^2(f)\sum_\sumstack{g\in D_{\fq(T)}\\ (g,f)=1}\mu^2(g)\frac{1}{\abs{fg}^2}\sum_\sumstack{j\geq 1\\  X^{1-\beta}/\abs{f}^2\leq q^{2j} < X^{1-\alpha}/\abs{f}^2}q^{2r}
    \\&\times\max_{\abs{u_0}=q^{-2}}\bigg\lvert q^{-2j}\sum_\sumstack{[K':\fq(T)]=2\\ \abs{\Disc(K')}=q^{2j}  \\ \text{$K'$ unramified at $fg$}} \frac{P_{K'}(qu_0)}{P_{K'}(u_0)}\sum_{\mathcal{C}_1\mid fg}\mu(\mathcal{C}_1)q^{\deg_{K'}(\mathcal{C}_1)}u_0^{\deg_{K'}(\mathcal{C}_1)}\sum_{\mathcal{C}_2\mid (fg)^\infty} u_0^{\deg(\mathcal{C}_2)} \bigg\rvert.
\end{split}
\end{equation}
Now, by multiplicativity
\begin{equation*}
    \sum_{\mathcal{C}_1\mid fg}q^{\deg_{K'}(\mathcal{C}_1)}u_0^{\deg_{K'}(\mathcal{C}_1)}\sum_{\mathcal{C}_2\mid (fg)^\infty} u_0^{\deg(\mathcal{C}_2)} = \prod_{\mathcal{P}\mid fg}\frac{\left(1-\abs{\mathcal{P}}^{-1-ix_0}_K\right)}{\left(1-\abs{\mathcal{P}}^{-2-ix_0}_K\right)}=\prod_{P\mid fg}\frac{\left(1-\abs{P}^{-1-ix_0}\right)}{\left(1-\abs{P}^{-2-ix_0}\right)}\cdot \frac{\left(1-\chi_{K'}(P)\abs{P}^{-1-ix_0}\right)}{\left(1-\chi_{K'}(P)\abs{P}^{-2-ix_0}\right)}.
\end{equation*}
Expanding this, together with $P_{K'}(qu_0)/P_{K'}(u_0)$ into a Dirichlet series allows us to estimate the maximum above by averaging $\chi_{K'}$ over $K'$. The condition that $K'$ is unramified at $fg$ is handled using Lemma \ref{KavLemma}. The computations are somewhat simplified by noting that
\begin{equation*}
    \frac{1}{P_K(u_0)}\prod_{P\mid fg}\left(1-\chi_{K'}(P)\abs{P}^{-2-ix_0}\right)^{-1} = \prod_{P\nmid fg}\left(1-\chi_{K'}(P)\abs{P}^{-2-ix_0}\right).
\end{equation*}
Performing the averaging shows that \eqref{finals1t1contrib} is $\ll 1/N$, cf. the computations at the end of Section \ref{firsts1t1sect}. 

\subsubsection{Extracting the phase transition}
We now turn to the sum $S_2$, defined by
\begin{equation*}
\begin{split}
    S_2:= &\frac{2q}{(q-1)N\#\tilde{\mathcal{F}}_{\alpha,\beta}(X)}\sum_\sumstack{m=1\\ \text{$m$ even}}^\infty \widehat{\psi}\left(\frac{m}{N}\right)\sum_{f\in D_{\fq(T)}}\mu^2(f)\sum_\sumstack{g\in D_{\fq(T)}\\ (g,f)=1}\mu(g)\sum_\sumstack{j\geq 1\\  X^{1-\beta}/\abs{f}^2\leq q^{2j} < X^{1-\alpha}/\abs{f}^2}\sum_\sumstack{[K':\fq(T)]=2\\ \abs{\Disc(K')}=q^{2j}  \\ \text{$K'$ unramified at $fg$,}}
    \\& \times\sum_{v=0}^{2r-2\deg(fg)}\sum_\sumstack{\deg_{K'}(\mathcal{C})=v\\ \mathcal{C}\mid (fg)^\infty}\mu(\mathrm{sqf}(\mathcal{C}))\sum_{k=0}^{2g_{K'}}a_{k,K}\sum_\sumstack{2a+b+v=2r-2\deg(fg)\\ b/2\geq 2r-2\deg(fg)-v- \delta r \\ (b-c_0)/2 \leq g_{K'}-1+m/2-k-c_0}\sigma_{K'}(a) q^{b-g_{K'}}.
\end{split}
\end{equation*}
As in the previous section, we may remove the second condition in the innermost sum and make the change of variables $b' = b-c_0$. Our goal is now to extract the phase transition from this sum.

Writing $\mathcal{C}=\mathcal{C}_1\mathcal{C}_2^2$ as before, the innermost sum above becomes
\begin{equation*}
    q^{c_0-g_{K'}}\sum_\sumstack{2a+b'=2r-4\deg(fg)-\deg_{K'}(\mathrm{sq}(\mathcal{C}))\\ b'/2 \leq g_{K'}-1+m/2-k-c_0}\sigma_{K'}(a) q^{b-c_0} = \frac{q^{c_0-g_{K'}}}{2\pi i}\int_{\abs{u}=q^{-3}}\frac{(1-u)(1-qu)(1-(q^2u)^{g_{K'}+m/2-k-c_0})}{u^{r-2\deg(fg)-\deg_{K'}(\mathcal{C}_2)+1}P_{K'}(u)(1-q^2u)}du,
\end{equation*}
cf. \eqref{S2innersum}. As before, we shift the integral to the circle $\abs{u}=q^{-1}$. We may then write the above as
\begin{equation*}
\begin{split}
    \frac{q^{c_0-g_{K'}}}{2\pi i}&\int_{\abs{u}=q^{-1}}\frac{(1-u)(1-qu)}{u^{r-2\deg(fg)-\deg_{K'}(\mathcal{C}_2)+1}P_{K'}(u)(1-q^2u)}du\\&-\frac{1}{2\pi i}\int_{\abs{u}=q^{-1}}\frac{(1-u)(1-qu)q^{g_{K'}+m-2k-c_0}u^{-k}}{u^{r-g_{K'}-m/2-2\deg(fg)-\deg_{K'}(\mathcal{C}_2)+c_0+1}P_{K'}(u)(1-q^2u)}du.
\end{split}
\end{equation*}
The first integral above provides only a power-saving contribution to $S_2$ and can be disregarded. Hence, we may focus our efforts on the second integral. The contribution from this integral when $m\leq 2n$ is $o(1)$, as can be seen using a similar argument as when we bounded $S_1+T_1$ in the previous section. We omit the details.

We turn to the case when $m>2n$. Summing the second integral above against $a_{k,K'}$ over $k$ yields
\begin{equation*}
\begin{split}
    -\frac{1}{2\pi i}\int_{\abs{u}=q^{-1}}\frac{(1-u)(1-qu)q^{-2g_{K'}+m-c_0}P_{K'}(qu)}{u^{r+g_{K'}-m/2-2\deg(fg)-\deg_{K'}(\mathcal{C}_2)+c_0+1}P_{K'}(u)(1-q^2u)}du.
\end{split}
\end{equation*}
Noting that $c_0-2\deg(fg) \leq 0$, we see that when $m> 2n$, the exponent of $u$ is positive, which means that the integrand does not have a pole at $u=0$. Hence, if we shift the integral to $\abs{u}=q^{-3}$, then the only contribution is from the residue at $u=q^{-2}$ so that
\begin{equation*}
\begin{split}
    -\frac{1}{2\pi i}\int_{\abs{u}=q^{-1}}\frac{(1-u)(1-qu)q^{-2g_{K'}+m-c_0}P_{K'}(qu)}{u^{r+g_{K'}-m/2-2\deg(fg)-\deg_{K'}(\mathcal{C}_2)+c_0+1}P_{K'}(u)(1-q^2u)}du = \frac{q^{2r}}{\abs{\mathcal{C}_1}\abs{\mathcal{C}_2}^2\abs{fg}^2}\cdot \frac{(1-q^{-2})(1-q^{-1})P_{K'}(q^{-1})}{P_{K'}(q^{-2})}.
\end{split}
\end{equation*}
Next, we note that
\begin{equation*}
    \sum_{\mathcal{C}_1\mid fg}\sum_{\mathcal{C}_2\mid (fg)^\infty}\frac{\mu(\mathcal{C}_1)}{\abs{\mathcal{C}_1}\abs{\mathcal{C}_2}^2} = \prod_{\mathcal{P}\mid fg}\frac{\left(1-\abs{\mathcal{P}}^{-1}\right)}{\left(1-\abs{\mathcal{P}^{-2}}\right)}=\prod_{\mathcal{P}\mid fg}\left(1+\frac{1}{\abs{\mathcal{P}}}\right)^{-1}.
\end{equation*}

We have shown that 
\begin{equation*}
\begin{split}
    S_2\sim  &\frac{2X(1-q^{-2})}{N\#\tilde{\mathcal{F}}_{\alpha,\beta}(X)}\sum_\sumstack{m>2n\\ \text{$m$ even}} \widehat{\psi}\left(\frac{m}{N}\right)\sum_{f\in D_{\fq(T)}}\mu^2(f)\sum_\sumstack{g\in D_{\fq(T)}\\ (g,f)=1}\mu(g)\sum_\sumstack{j\geq 1\\  X^{1-\beta}/\abs{f}^2\leq q^{2j} < X^{1-\alpha}/\abs{f}^2}q^{-2j}\\&\times \sum_\sumstack{[K':\fq(T)]=2\\ \abs{\Disc(K')}=q^{2j}  \\ \text{$K'$ unramified at $fg$}}\frac{P_{K'}(q^{-1})}{P_{K'}(q^{-2})}\prod_{\mathcal{P}\mid fg}\left(1+\frac{1}{\abs{\mathcal{P}}}\right)^{-1}.
\end{split}
\end{equation*}
Comparing with \eqref{Sumfieldflip}, we see that
\begin{equation*}
    S_2\sim \frac{2}{N}\sum_\sumstack{m> 2n\\ \text{$m$ even}} \widehat{\psi}\left(\frac{m}{N}\right) \sim \frac{\psi(0)}{2}-\frac{1}{2}\int_{-1}^1\widehat{\psi}(u)du,
\end{equation*}
cf. \eqref{phasetrans}. This concludes the proof of Proposition \ref{bigsubfieldonelevthm}.

\section{Non-vanishing of $P_{L/K}$}\label{nonvanishsect}
We now apply Proposition \ref{smallalphabetaonelevdensthm} and Proposition \ref{bigsubfieldonelevthm} to study the non-vanishing of the $L$-functions 
\begin{equation*}
    P_{L/K}(u) = \frac{\zeta_L(u)}{\zeta_K(u)},
\end{equation*}
with $L$ a $D_4$-field and $K$ its quadratic subfield. The starting point for our computations is an observation from \cite[Corollary 3]{Ozluk-Snyder}, namely that if $\psi$ is real, even and nonnegative with $\psi(0) \neq 0$, then 
\begin{equation*}
    \frac{1}{\#\mathcal{F}_{\alpha,\beta}(X)}\sum_{(L,K)\in \mathcal{F}_{\alpha,\beta}(X)}\mathbf{1}_{P_{L/K}(q^{-1/2})=0} \leq \frac{1}{2\psi(0)\#\mathcal{F}_{\alpha,\beta}(X)}\sum_{(L,K)\in \mathcal{F}_{\alpha,\beta}(X)}D_{L/K}(\psi),
\end{equation*}
with $D_{L/K}(\psi)$ defined in \eqref{dlkdef}. Indeed, 
\begin{equation*}
    D_{L/K}(\psi) = \sum_{\theta_{L/K}}\psi\left(N\frac{\theta_{L/K}}{2\pi}\right) \geq 2\cdot \mathbf{1}_{P_{L/K}(q^{-1/2})=0},
\end{equation*}
as every central zero has even multiplicity by self-duality. Now, if $\mathrm{supp}(\psi)\subseteq [-\sigma,\sigma]$, with $\sigma < \max\{2-3\beta,2-3(1-\alpha)\}$, then by Propositions \ref{smallalphabetaonelevdensthm} and \ref{bigsubfieldonelevthm}, we have for large enough $q$, that
\begin{equation}\label{nonvanisheq}
    \lim_{X\to \infty}\frac{1}{2\psi(0)\#\mathcal{F}_{\alpha,\beta}(X)}\sum_{(L,K)\in \mathcal{F}_{\alpha,\beta}(X)}D_{L/K}(\psi) = \frac{1}{2\psi(0)}\left(\widehat{\psi}(0)-\frac{1}{2}\int_{-1}^1\widehat{\psi}(u)du\right). 
\end{equation}
Let us write $\psi = \psi_\sigma$ for a test function satisfying the conditions above, including that $\mathrm{supp}(\psi) \subseteq [-\sigma,\sigma]$. Then, for every $\sigma< 2$, say, we wish to minimise the right-hand side in \eqref{nonvanisheq}, by finding an optimal $\psi_\sigma$.

Let $\epsilon > 0$ be arbitrary and write $\sigma(\alpha,\beta,\epsilon) = \max\{2-3\beta,2-3(1-\alpha)\}-\epsilon$. Recall also that we have fixed an arbitrary $\eta_0 >0$ earlier. Now, we pick numbers $0<\eta_0 =\alpha_1<\beta_1=\alpha_2<\beta_2=\alpha_3 <.....<\beta_m\leq 1-\eta_0< 1$. Then, recalling that $\mathcal{F}_{\alpha,\beta}(X)$ makes up a proportion $\beta-\alpha$ of $\mathcal{F}(X)$, we have for large enough $q$, depending only on $\eta_0$, that
\begin{equation*}
    \limsup_{X\to \infty}\frac{1}{\#\mathcal{F}(X)}\sum_{(L,K)\in \mathcal{F}_{\alpha,\beta}(X)}\mathbf{1}_{P_{L/K}(q^{-1/2})=0} \leq \sum_{i=1}^m \frac{1}{(\beta_i-\alpha_i)}\cdot \frac{1}{2\psi_{\sigma(\alpha_i,\beta_i,\epsilon)}(0)}\left(\widehat{\psi}_{\sigma(\alpha_i,\beta_i,\epsilon)}(0)-\frac{1}{2}\int_{-1}^1\widehat{\psi}_{\sigma(\alpha_i,\beta_i,\epsilon)}(u)du\right)+2\eta_0,
\end{equation*}
where the term $2\eta_0$ comes from the trivial bound that at most every $L$-function in $\mathcal{F}_{0,\eta_0}$ and $\mathcal{F}_{1-\eta_0,1}$ vanishes at the central point.

We now increase $m$ and pick the $\alpha_i$ and $\beta_i$ so that $\max_{i=1,...,m}\{\beta_i-\alpha_i\}\to 0$ as $m\to \infty$. As $\eta_0$ and $\epsilon>0$ is arbitrary, this proves that
\begin{equation}\label{nonvanishingeq}
    \limsup_{q\to \infty}\limsup_{X\to \infty}\frac{1}{\#\mathcal{F}(X)}\sum_{(L,K)\in \mathcal{F}_{\alpha,\beta}(X)}\mathbf{1}_{P_{L/K}(q^{-1/2})=0} \leq \int_{0}^1 \frac{1}{2\psi_{\sigma(x)}(0)}\left(\widehat{\psi}_{\sigma(x)}(0)-\frac{1}{2}\int_{-1}^1\widehat{\psi}_{\sigma(x)}(u)du\right)dx,
\end{equation}
with $\sigma(x) = \max\{2-3x,2-3(1-x)\}$.

It remains to optimise $\psi_\sigma$. A density argument allows us to extend the class of admissible $\psi$ to functions that are $L^1$, say, but not necessarily Schwartz. A close to optimal choice, used in \cite{Ozluk-Snyder}, is
\begin{equation}\label{nonoptchoice}
    \psi_\sigma(y) = \left(\frac{\sin \pi \sigma y}{\pi \sigma y}\right)^2,
\end{equation}
with Fourier transform 
\begin{equation*}
    \widehat{\psi_\sigma}(u) = \sigma^{-2}\max\{\sigma-\abs{u},0\}.
\end{equation*}
With this choice of test function, a computation shows that the right-hand side in \eqref{nonvanishingeq} becomes
\begin{equation*}
    \frac{\log(2)}{3}\leq  0.2311,
\end{equation*}
so that at least $76.89\%$ of the $P_{L/K}$ are non-vanishing at the central point.

As we mentioned above, the choice \eqref{nonoptchoice} is not fully optimal, at least for $\sigma > 1$. Optimal test functions were originally found by J. Vanderkam, see also \cite[Appendix A]{ILS}. These optimal test functions are also described very explicitly in \cite[Section 7]{Freeman}. 

For our purposes, it suffices to know the following facts from \cite{Freeman}, applicable when $1<\sigma \leq2$. First, the optimal choice for the expression 
\begin{equation*}
    \frac{1}{\psi_\sigma(0)}\left(\widehat{\psi}_{\sigma}(0)-\frac{1}{2}\int_{-1}^1\widehat{\psi}_{\sigma}(u)du\right)dx = \frac{1}{\langle 1,g_{\sigma'}\rangle},
\end{equation*}
for a certain $g_{\sigma'}$ with support in $[-\sigma',\sigma']$, with $\sigma' = \sigma/2$. In fact, equations \cite[(7.23), (7.29)]{Freeman} imply (after correcting minor errors in their statement) that
\begin{equation*}
    \frac{1}{2}\langle 1,g_{\sigma'}\rangle = g_{\sigma'}(0)-1,
\end{equation*}
with $g_\sigma$ explicitly given by
\begin{equation*}
    g_{\sigma'}(x) :=\frac{1}{\gamma_{\sigma'}}\begin{cases}
        \cos\left(\frac{1-\sigma'}{2}+\frac{\pi -1}{4}\right), \abs{x}\leq 1-\sigma',
        \\ \cos\left(\frac{\abs{x}}{2}+\frac{\pi -1}{4}\right), 1-\sigma'\leq \abs{x}\leq \sigma',
        \\ 0, \text{ else}.
    \end{cases}
\end{equation*}
Here,
\begin{equation*}
    \gamma_{\sigma'}  = \sigma' \cos\left(\frac{1-\sigma'}{2}+\frac{\pi-1}{4}\right)+2\sqrt{2}\sin\left(\frac{1-2\sigma'}{4}\right).
\end{equation*}
Using these formulae to numerically compute the integral in \eqref{nonvanisheq} yields an upper bound $\leq 0.2296$, so that at least $77.04\%$ of the $P_{L/K}$ are non-vanishing at the central point. We have proven the following theorem.
\begin{theorem}
    Let $q$ be a prime power larger than an absolute constant, coprime to $2$. Then, at least $77 \%$ of the relative Dedekind zeta functions $P_{L/K}(q^{-s})$ associated to $(L,K)\in \mathcal{F}(X)$ are non-vanishing at the central point $s=1/2$. More precisely, 
    \begin{equation*}
        \liminf_{X\to \infty} \frac{1}{\#\mathcal{F}_{\alpha,\beta}(X)}\sum_{(L,K)\in \mathcal{F}_{\alpha,\beta}(X)}\mathbf{1}_{P_{L/K}(q^{-1/2})\neq 0}\geq 0.77. 
    \end{equation*}
\end{theorem}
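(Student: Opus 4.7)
The plan is to follow the standard \cite{Ozluk-Snyder}-style argument for converting the one-level density results of Proposition \ref{smallalphabetaonelevdensthm} and Proposition \ref{bigsubfieldonelevthm} into a non-vanishing statement, refined by splitting the family $\mathcal{F}(X)$ into many subfamilies where a larger admissible support $\sigma$ is available. The starting point is that, for any real, even, nonnegative Schwartz function $\psi$ with $\psi(0) \neq 0$, every central zero of $P_{L/K}$ contributes at least $2$ to $D_{L/K}(\psi)$ by self-duality, so
\begin{equation*}
    \mathbf{1}_{P_{L/K}(q^{-1/2})=0}\leq \frac{D_{L/K}(\psi)}{2\psi(0)}.
\end{equation*}

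Next, I would fix a small $\eta_0 > 0$ and choose a partition $\eta_0 = \alpha_1 < \beta_1 = \alpha_2 < \cdots < \beta_m \leq 1-\eta_0$ of $[\eta_0, 1-\eta_0]$. For each subfamily $\mathcal{F}_{\alpha_i,\beta_i}(X)$, I would pick a test function $\psi_{\sigma_i}$ supported in $(-\sigma_i,\sigma_i)$ with $\sigma_i = \max\{2-3\beta_i, 2-3(1-\alpha_i)\}-\epsilon$ (valid by Propositions \ref{smallalphabetaonelevdensthm} and \ref{bigsubfieldonelevthm}, for $q$ large depending on $\eta_0$ and $\epsilon$). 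Summing the resulting bound over $i$, using that $\mathcal{F}_{\alpha_i,\beta_i}(X)/\mathcal{F}(X) \to \beta_i - \alpha_i$ by Proposition \ref{reffieldct}, and bounding the contribution of $\mathcal{F}_{0,\eta_0}\cup \mathcal{F}_{1-\eta_0,1}$ trivially by $2\eta_0$, I would obtain
\begin{equation*}
    \limsup_{q\to\infty}\limsup_{X\to\infty}\frac{1}{\#\mathcal{F}(X)}\sum_{(L,K)\in \mathcal{F}(X)}\mathbf{1}_{P_{L/K}(q^{-1/2})=0}\leq \sum_{i=1}^m (\beta_i-\alpha_i)\cdot\frac{1}{2\psi_{\sigma_i}(0)}\left(\widehat{\psi}_{\sigma_i}(0)-\frac{1}{2}\int_{-1}^1\widehat{\psi}_{\sigma_i}(u)du\right)+ 2\eta_0.
\end{equation*}
Refining the partition so that $\max_i(\beta_i-\alpha_i)\to 0$, letting $\eta_0,\epsilon\to 0$, and recognising the sum as a Riemann sum gives the integral bound
\begin{equation*}
    \int_{0}^1 \frac{1}{2\psi_{\sigma(x)}(0)}\left(\widehat{\psi}_{\sigma(x)}(0)-\frac{1}{2}\int_{-1}^1\widehat{\psi}_{\sigma(x)}(u)du\right)dx,\qquad \sigma(x) = \max\{2-3x,2-3(1-x)\}.
\end{equation*}

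The final step is to choose $\psi_{\sigma(x)}$ optimally and compute the integral numerically. For the naive Fejér-kernel choice $\psi_\sigma(y)=\left(\sin(\pi\sigma y)/(\pi\sigma y)\right)^2$ with $\widehat{\psi}_\sigma(u)=\sigma^{-2}\max(\sigma-|u|,0)$, a direct calculation gives $\log(2)/3 \leq 0.2311$, corresponding to a $76.89\%$ rate. To push past $77\%$, I would instead use the Vanderkam optimal test functions as tabulated explicitly in \cite[Section 7]{Freeman}: for $1<\sigma\leq 2$ the minimum of $\widehat{\psi}_\sigma(0)/\psi_\sigma(0) - \frac{1}{2}\int_{-1}^1 \widehat{\psi}_\sigma(u)du/\psi_\sigma(0)$ equals $2/(g_{\sigma/2}(0)-1)$, where $g_{\sigma'}$ is the piecewise-cosine function given in \cite[(7.23), (7.29)]{Freeman}. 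Plugging this into the integral and evaluating numerically (splitting by whether $\sigma(x)\leq 1$ or $>1$) yields an upper bound $\leq 0.2296$, hence a non-vanishing rate of at least $77.04\%$.

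The routine steps above are essentially assembling the pieces; the only real obstacle is the numerical evaluation of the integral with the Vanderkam test functions, where one has to correctly implement the piecewise definition of $g_{\sigma'}$ (and its behaviour at the break $\sigma=1$) and verify by numerical integration that the resulting value is strictly below $0.23$. Everything else—the splitting into subfamilies, the Riemann-sum limit, the density argument extending admissible test functions from Schwartz to $L^1$—follows the template already used in \cite{Ozluk-Snyder} and \cite{ILS}.
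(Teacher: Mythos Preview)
Your proposal is correct and follows essentially the same approach as the paper's own proof in Section \ref{nonvanishsect}: the self-duality bound $\mathbf{1}_{P_{L/K}(q^{-1/2})=0}\leq D_{L/K}(\psi)/(2\psi(0))$, the partition of $[\eta_0,1-\eta_0]$ into subfamilies with support $\sigma(\alpha_i,\beta_i,\epsilon)$, the Riemann-sum passage to the integral over $[0,1]$, the Fej\'er-kernel computation giving $\log(2)/3$, and the final improvement via the Vanderkam/Freeman optimal test functions to reach $\leq 0.2296$. Aside from an inconsequential factor slip in your stated value $2/(g_{\sigma/2}(0)-1)$ (the paper has $1/\langle 1,g_{\sigma'}\rangle = 1/(2(g_{\sigma'}(0)-1))$), the argument and the numerical conclusion coincide.
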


\addcontentsline{toc}{chapter}{Bibliography}

\end{document}